\newcommand{\multiline}[1]{%
  \begin{tabularx}{\dimexpr\linewidth-\ALG@thistlm}[t]{@{}X@{}}
    #1
  \end{tabularx}
}
\newcommand{\ukappa}{\boldsymbol{\kappa}}
\newcommand{\udelta}{\boldsymbol{\delta}}
\newcommand{\uv}{\mathbf{u}}
\newcommand{\pnl}{P_N^{n,\ell}}
\newcommand{\pnll}{P_N^{n_N,\ell_N}}
\newcommand{\pn}{P_N^n}
\newcommand{\Uv}{\mathbf{U}}
\newcommand{\uvn}{\Uv_N^n}
\newcommand{\uvnl}{\Uv_N^{n,\ell}}
\newcommand{\uvnll}{\Uv_N^{n_N,\ell_N}}
\newcommand{\uvnlo}{\Uv_N^{n,\ell+1}}
\newcommand{\Dv}{\mathbf{D}}
\newcommand{\Du}{\D \Uv}
\newcommand{\Vv}{\mathbf{V}}
\newcommand{\Wv}{\mathbf{W}}
\newcommand{\fv}{\mathbf{f}}
\newcommand{\vv}{\mathbf{v}}
\newcommand{\wv}{\mathbf{w}}
\newcommand{\Sv}{\mathbf{S}}
\newcommand{\A}{\mathcal{A}}
\newcommand{\Ao}{\mathrm{A}}
\newcommand{\B}{\mathcal{B}}
\newcommand{\D}{\mathrm{D}}
\newcommand{\J}{\mathrm{J}}
\renewcommand{\H}{\mathsf{H}}
\newcommand{\F}{\mathrm{F}}
\newcommand{\V}{\mathbb{V}}
\newcommand{\Q}{\mathbb{Q}}
\newcommand{\T}{\mathcal{T}}
\newcommand{\To}{\mathrm{T}}
\newcommand{\tn}{\To_N^n}
\newcommand{\R}{\mathcal{R}}
\renewcommand{\D}{\mathrm{D}}
\newcommand{\x}{\bm{x}}
\newcommand{\dx}{\,\mathsf{d}\bm{x}}
\newcommand{\Rsym}{\mathbb{R}^{d\times d}_{\mathrm{sym}}}
\newcommand{\jdiv}{\mathcal{J}_{\dive}^N}
\newcommand{\jq}{\mathcal{J}_{\Q}^N}
\newcommand{\Rnum}[1]{\mathrm{\uppercase\expandafter{\romannumeral #1\relax}}}
\newcommand{\Cb}{C_{\mathcal{B}}}
\newcommand{\norm}[1]{\left\|#1\right\|}
\newcommand{\nnn}[1]{{\left\vert\kern-0.25ex\left\vert\kern-0.25ex\left\vert #1 
    \right\vert\kern-0.25ex\right\vert\kern-0.25ex\right\vert}}
\newcommand{\dprod}[1]{\langle #1\rangle}
\newcommand{\jmp}[1]{\left\llbracket#1\right\rrbracket}
\DeclareMathOperator*{\esssup}{ess\,sup}
\DeclareMathOperator*{\dive}{div}
\newtheorem{theorem}{Theorem}[section]
\newtheorem{lemma}[theorem]{Lemma}
\newtheorem{proposition}[theorem]{Proposition} 
\newtheorem{corollary}[theorem]{Corollary}
\theoremstyle{definition}
\newtheorem{remark}[theorem]{Remark}
\newtheorem{assumption}[theorem]{Assumption}
\title[AILFEM for Bingham fluids]{Adaptive iterative linearised finite element methods for implicitly constituted incompressible fluid flow problems and its application to Bingham fluids}
\author[P.~Heid]{Pascal Heid}
\email{pascal.heid@maths.ox.ac.uk}
\author[E.~S\"{u}li]{Endre S\"{u}li}
\email{endre.suli@maths.ox.ac.uk}
\address{Mathematical Institute, University of Oxford, Woodstock Road, Oxford OX2 6GG, UK}
\thanks{PH acknowledges the financial support of the Swiss National Science Foundation (SNF), Project No. P2BEP2\underline{\space}191760.}
\keywords{%
Implicitly constituted incompressible fluid flow problems, Bingham fluids, finite element methods, Ka\v{c}anov scheme, Zarantonello iteration, adaptive algorithm}
\subjclass[2010]{65J15, 35Q35, 65N50}
\begin{document}

\begin{abstract}
In this work, we introduce an iterative linearised finite element method for the solution of Bingham fluid flow problems. The proposed algorithm has the favourable property that a subsequence of the sequence of iterates generated converges weakly to a solution of the problem. This will be illustrated by two numerical experiments.
\end{abstract}



\maketitle

\section{Introduction}

In this work, we consider implicitly constituted incompressible fluid flow problems, as introduced by Rajagopal in~\cite{Rajagopal:03, Rajagopal:06}: Instead of demanding, as in the classical theory of continuum mechanics, that the Cauchy stress is an explicit function of the symmetric part of the gradient of the velocity vector, one may allow an implicit relation of those two quantities. For a rigorous mathematical analysis, including the proof of the existence of a weak solution for models of implicitly constituted fluids we refer the reader to~\cite{Bulicek:09} and~\cite{Bulicek:12} for  steady and unsteady flows, respectively. Regarding the numerical analysis of such fluid flow models, very few results have been published so far. In~\cite{DiKrSu:2013} the authors proved the weak convergence of a sequence of finite element approximations to a weak solution in the steady case. An a posteriori analysis of finite element approximations of the model was carried out in~\cite{KreuzerSuli:2016}; in addition, those authors proved the weak convergence of an adaptive finite element method. We further refer to~\cite{Hron:17} for a numerical investigation of several finite element discretisation methods and linearisation schemes for Bingham and stress-power-law fluids. Just recently, in~\cite{Orozco:21}, the author proposed a semismooth Newton method for the numerical approximation of a steady solution to implicit constitutive fluid flow problems. For the numerical analysis of the unsteady case (not considered in this work) we refer to~\cite{SuliTscherpel:19, FarrellSuliOrozco:20}.

The aim of this work is to extend the theory of~\cite{KreuzerSuli:2016} to adaptive \emph{iterative linearised} finite element methods for implicitly constituted fluid flow problems; i.e., in contrast to~\cite{KreuzerSuli:2016}, we will take into account the approximation of a finite element solution by a (linear) iteration scheme. Subsequently, it shall be shown that this abstract analysis can be applied in the context of Bingham fluids.

The organisation of the paper is as follows: In Section 2, we present the preliminaries, including the formulation of the problem, its finite element approximation, as well as an a posteriori error analysis. In Section 3, we will state the adaptive iterative linearised finite element method (AILFEM), and prove the convergence of this algorithm. Since this result can be shown by some minor modifications of the analysis in~\cite{KreuzerSuli:2016}, we will omit the details of the proof, and rather give a rough sketch in Appendix~\ref{app:1} and highlight where the main differences occur. Next, in Section 4, we will verify the assumptions required for the convergence of AILFEM for Bingham flows. We will perform corresponding numerical experiments in Section 5, and conclude our work with some closing remarks.

\section{Preliminaries}     

In this section, we will introduce the model of a steady flow of an incompressible fluid in a bounded open Lipschitz domain $\Omega \subset \mathbb{R}^d$, $d \in \{2,3\}$, with polyhedral boundary $\partial \Omega$, which satisfies an implicit constitutive relation given by a maximal monotone $r$-graph. Beforehand, let us introduce some basic notions concerning Lebesgue and Sobolev spaces.\\

\subsection{Basic notations}

For any measurable open set $\omega \subseteq \Omega$  and $s \in [1,\infty)$ we denote by $L^s(\omega):=L^s(\omega;\mathbb{R})$ the Lebesgue space of $s$-integrable functions with corresponding norm $\norm{f}_{s,\omega}:=\left(\int_\omega |f(\x)|^s \dx\right)^{\nicefrac{1}{s}}$. Moreover, $L^\infty(\omega)=L^\infty(\omega;\mathbb{R})$ denotes the Lebesgue space of essentially bounded functions with the norm $\norm{f}_{\infty,\omega}:=\esssup_{\x \in \omega}|f(\x)|$, and $L_0^s(\omega):=\{f \in L^s(\omega): \int_\omega f \dx=0\}$ denotes the set of functions (in the corresponding Lebesgue space) with zero mean value. We note that, for $s \in (1,\infty)$, $L^{s'}(\omega)$ and $L^{s'}_0(\omega)$ are the dual spaces of $L^s(\omega)$ and $L^{s}_0(\omega)$, respectively, where $s' \in (1,\infty)$ is the H\"{o}lder conjugate of $s$, i.e., $\nicefrac{1}{s}+\nicefrac{1}{s'}=1$.

Likewise, for $s \in [1,\infty]$, we denote by $W^{1,s}(\omega)^d:=W^{1,s}(\omega;\mathbb{R}^d)$ the space of vector-valued Sobolev functions. Moreover, for $s \in [1,\infty]$, the space of vector-valued Sobolev functions with zero trace along the boundary is denoted by $W_0^{1,s}(\omega)^d$ and is equipped with the norm $\|\fv\|_{1,s,\omega}:=\norm{\nabla \fv}_{s,\omega}$. Equivalently, for $s \in [1,\infty)$, $W_0^{1,s}(\omega)^d$ is the closure in $W^{1,s}(\omega)^d$ of $\mathcal{D}(\omega)^d:=C_0^\infty(\omega)^d$, i.e., the space of smooth (vector-valued) functions with compact support in $\omega$, and its dual space, for any $s \in (1,\infty)$, is denoted by $W^{-1,s'}(\omega)^d$. The norm in the dual space $W^{-1,s'}(\omega)^d$ is as usual given by 
\begin{align*}
\norm{\varphi}_{-1,s',\omega}:=\sup_{\stackrel{\vv \in W_0^{1,s}(\omega)^d}{\norm{\vv}_{1,s,\omega} =1}} \dprod{\varphi,\vv},
\end{align*}
where $\dprod{\cdot,\cdot}$ signifies the duality pairing. In the sequel, for $\omega=\Omega$, we omit the domain in the subscript of the norms; e.g., we write $\norm{\cdot}_{s}:=\norm{\cdot}_{s,\Omega}$.

Finally, for $\udelta,\ukappa \in \Rsym$ we denote by $\udelta:\ukappa$ the Frobenius inner-product, and by $|\ukappa|$ the Frobenius norm.

\subsection{Problem formulation}
As before, let $\Omega \subset \mathbb{R}^d$, $d \in \{2,3\}$, be a bounded open Lipschitz domain with polyhedral boundary $\partial \Omega$. For $r \in (1,\infty)$, we set
\begin{align*} 
\tilde{r}:=\begin{cases} \frac{dr}{2(d-r)} &\text{if } r \leq \frac{3d}{d+2}, \\ r' & \text{otherwise}, \end{cases}
\end{align*} 
where $r' \in (1,\infty)$ denotes the H\"older conjugate of $r$. Then, the implicitly constituted incompressible fluid flow problem under consideration reads as follows: for $\fv \in L^{r'}(\Omega)^d$ find $(\uv,p,\Sv) \in W_0^{1,r}(\Omega)^d \times L_0^{\tilde{r}}(\Omega) \times L^{r'}(\Omega)^{d \times d}$ such that
\begin{subequations} \label{eq:pde}
\begin{align}
\dive(\uv \otimes \uv+p \mathbf{I}-\Sv)&=\fv && \text{in } \mathcal{D}'(\Omega)^d, && \\
\dive \uv&=0 && \text{in } \mathcal{D}'(\Omega), &&\\
(\mathrm{D}\uv(\x),\Sv(\x)) & \in \mathcal{A}(\x) && \text{for a.e. } \x \in \Omega; &&
\end{align}
\end{subequations} 
here, $\mathrm{D}\uv:=\frac{1}{2}(\nabla \uv +( \nabla \uv)^{\mathrm{T}}) \in \mathbb{R}^{d \times d}_{\mathrm{sym}}:=\{\ukappa \in \mathbb{R}^{d \times d}:\ukappa=\ukappa^{\mathrm{T}}\}$ signifies the symmetric velocity gradient. Moreover, $\mathcal{A}:\Omega \to \mathbb{R}^{d\times d}_{\mathrm{sym}} \times \mathbb{R}^{d\times d}_{\mathrm{sym}}$ is a maximal monotone $r$-graph, i.e., for almost every $\x \in \Omega$ the following properties are satisfied:
\begin{enumerate}[(A1)]
\item[(A1)] $(0,0) \in \A(\x)$;
\item[(A2)] For all $(\ukappa_1,\udelta_1),(\ukappa_2,\udelta_2) \in \A(\x)$,
\begin{align*}
(\udelta_1-\udelta_2):(\ukappa_1-\ukappa_2) \geq 0;
\end{align*}
\item[(A3)] If $(\ukappa,\udelta) \in \Rsym \times \Rsym$ and 
\begin{align*}
(\overline{\udelta}-\udelta):(\overline{\ukappa}-\ukappa) \geq 0 \qquad \text{for all} \ (\overline{\ukappa},\overline{\udelta}) \in \A(\x),
\end{align*}  
then $(\ukappa,\udelta) \in \A(\x)$;
\item[(A4)] There exist a non-negative function $m \in L^1(\Omega)$ and a constant $c>0$, such that for all $(\ukappa,\udelta) \in \A(\x)$ we have that
\begin{align*}
\udelta:\ukappa \geq -m(x)+c(|\ukappa|^r+|\udelta|^{r'});
\end{align*}
\item[(A5)] The set-valued mapping $\A:\Omega \to \Rsym \times \Rsym$ is measurable, i.e., for any closed sets $\mathcal{C}_1,\mathcal{C}_2 \subset \Rsym$ we have that 
\[
\{\x \in \Omega: \A(\x) \cap (\mathcal{C}_1 \times \mathcal{C}_2) \neq \emptyset\}
\]
is a Lebesgue measurable subset of $\Omega$.
\end{enumerate}

The existence of a (not necessarily unique) solution to~\eqref{eq:pde} for $r>\frac{2d}{d+2}$ was first established in~\cite{Bulicek:09}. The proof is based on a (monotone) measurable selection $\Sv^\star:\Omega \times \Rsym \to \Rsym$ of the graph $\mathcal{A}$; i.e., for all $\mathbf{\ukappa} \in \Rsym$ we have that $(\mathbf{\ukappa},\mathbf{S}^\star(\x,\mathbf{\ukappa})) \in \mathcal{A}(\x)$ for almost every $\x \in \Omega$. In turn, the measurable selection $\Sv^\star:\Omega \times \Rsym \to \Rsym$ was approximated by a sequence of strictly monotone mappings $\Sv^n:\Omega \times \Rsym \to \Rsym$, $n = 0,1,2,\dotsc$, obtained by a mollification of $\Sv^\star$. In this work, following~\cite{KreuzerSuli:2016}, we allow for more general graph approximations.

\begin{assumption} \label{as:graphapp}
For any $n \in \mathbb{N}$ there exists a mapping $\Sv^n:\Omega \times \Rsym \to \Rsym$ such that
\begin{itemize}
\item $\Sv^n(\cdot,\ukappa):\Omega \to \Rsym$ is measurable for all $\ukappa \in \Rsym$;
\item $\Sv^n(\x,\cdot):\Rsym \to \Rsym$ is continuous for almost every $\x \in \Omega$;
\item $\Sv^n$ is strictly monotone in the sense that, for all $\ukappa_1 \neq \ukappa_2 \in \Rsym$, we have that
\begin{align*}
(\Sv^n(\x,\ukappa_1)-\Sv^n(\x,\ukappa_2)):(\ukappa_1-\ukappa_2)>0 \qquad \text{for almost every } \x \in \Omega;
\end{align*}
\item There exist constants $\tilde{c}_1,\tilde{c}_2 >0$ and non-negative functions $\tilde{m} \in L^1(\Omega)$ and $\tilde{k} \in L^{r'}(\Omega)$ such that, uniformly in $n \in \mathbb{N}$,
\begin{align}
|\Sv^n(\x,\ukappa)| & \leq \tilde{c}_1 |\ukappa|^{r-1}+\tilde{k}(\x), \label{eq:boundedS}\\
\Sv^n(\x,\ukappa):\ukappa & \geq \tilde{c}_2|\ukappa|^r-\tilde{m}(\x) \label{eq:monotoneS}
\end{align}
for all $\ukappa \in \Rsym$ and almost every $\x \in \Omega$.
\end{itemize}
\end{assumption}

Of course, we also need that $\Sv^n$ approximates, in a certain sense, the measurable selection $\Sv^\star$ of $\A$; this will be made precise in Section~\ref{sec:ailfem}. Then, the regularised counterpart of problem~\eqref{eq:pde} is given as follows: for $\fv \in L^{r'}(\Omega)^d$ find $(\uv^n,p^n) \in W_0^{1,r}(\Omega)^d \times L_0^{\tilde{r}}(\Omega)$ such that
\begin{alignat*}{2}
\dive(\uv^n \otimes \uv^n+p^n \mathbf{I}-\Sv^n(\uv^n))&=\fv & \quad & \text{in } \mathcal{D}'(\Omega)^d, \\
\dive \uv^n&=0 && \text{in } \mathcal{D}'(\Omega).
\end{alignat*}
We note that in the problem formulation above and in the following, the explicit dependence of $\Sv^n$ on $\x \in \Omega$ will be suppressed.

%

%

\subsection{Finite element spaces}

In this work, we consider a sequence $\{\T_N\}_N$ of shape-regular conforming triangulations of $\Omega$, such that $\T_{N+1}$ is obtained by a refinement of $\T_N$. For any $m \in \mathbb{N}$ let us denote by $\mathbb{P}_m$ the space of polynomials of degree at most $m$. Then, the corresponding conforming finite element spaces are given by 
\begin{align*}
\V(\T_N)&:=\{\Vv \in W_0^{1,2}(\Omega)^d: \Vv|_K \in \mathbb{P}_{\V}(K) \ \text{for all} \ K \in \T_N\},\\
\Q(\T_N)&:=\{Q \in L^\infty(\Omega): Q|_K \in \mathbb{P}_\Q(K) \ \text{for all} \ K \in \T_N\},
\end{align*} 
where $\mathbb{P}_\V$ and $\mathbb{P}_\Q$ are spaces of polynomials such that $\mathbb{P}_1^d \subseteq \mathbb{P}_\V \subseteq \mathbb{P}^d_{i^\star}$ and $\mathbb{P}_0 \subseteq \mathbb{P}_\Q \subseteq \mathbb{P}_{j^\star}$ for some $i^\star \geq j^\star \geq 0$. We note the nestedness $\V(\T_N) \subseteq \V(\T_{N+1})$ and $\Q(\T_N) \subseteq \Q(\T_{N+1})$ of the finite element spaces. We further introduce the space of discretely divergence-free velocity vectors
\begin{align*}
\V_0(\T_N):=\left\{\Vv \in \V(\T_N): \int_\Omega Q \dive \Vv \dx=0 \ \text{for all} \ Q \in \Q(\T_N)\right\},
\end{align*}
and the space 
\begin{align*}
\Q_0(\T_N):=\left\{Q \in \Q(\T_N): \int_\Omega Q \dx=0\right\}.
\end{align*}
In order to apply the analysis from~\cite{KreuzerSuli:2016}, we need to impose the same assumptions on the velocity-pressure pairs of finite element spaces. In the following, for any $K \in \mathcal{T}_N$, \[\mathcal{U}^N(K):= \cup\{K' \in \mathcal{T}_N: \overline{K} \cap \overline{K'} \neq \emptyset\}\] denotes the patch of (not necessarily facewise) neighbours of $K$.

\begin{assumption} \label{as:jdiv}
We assume that for each $N \in \mathbb{N}$ there exists a linear projection operator $\jdiv:W_0^{1,1}(\Omega)^d \to \V(\T_N)$ such that, for all $s \in [1,\infty)$,
\begin{itemize}
\item $\jdiv$ preserves divergence in the dual of $\Q(\T_N)$; i.e., for $\vv \in W_0^{1,s}(\Omega)^d$ we have that 
\begin{align*} 
\int_\Omega Q \dive \vv \dx = \int_\Omega Q \dive \jdiv \vv \dx \qquad \text{for all } Q \in \Q(\T_N).
\end{align*}
\item $\jdiv$ is locally defined, i.e., we have that 
\begin{align*}
\mathcal{J}_{\dive}^{N+1} \vv|_{\mathcal{U}^{N+1}(K)}=\jdiv \vv|_{\mathcal{U}^N(K)}
\end{align*}
for all $\vv \in W_0^{1,s}(\Omega)^d$ and all $K \in \T_N$ such that each element $K'$ in $\mathcal{U}^N(K)$ has remained unrefined\footnote{This means that $\{K' \in \T_N: \overline{K}' \cap \overline{K} \neq 0\}=\{K' \in \T_{N+1}: \overline{K}' \cap \overline{K} \neq 0\}$.}. 
\item $\jdiv$ is locally $W^{1,1}$-stable, i.e., there exists a constant $c>0$ independent of $N$, such that
\begin{align} \label{eq:localstab}
\int_K |\jdiv \vv|+h_K |\nabla \jdiv \vv| \dx \leq c \int_{\mathcal{U}^N(K)} |\vv|+h_{\T_N}(\x)|\nabla \vv| \dx
\end{align}
for all $\vv \in W^{1,s}_0(\Omega)$ and all $K \in \T_N$; here $h_K:=|K|^{\nicefrac{1}{d}}$ and $h_{\T_N}(\x)=|K'|^{\nicefrac{1}{d}}$ for $\x \in K' \in \T_N$.
\end{itemize}
\end{assumption}



\begin{assumption} \label{as:jq}
We assume that for each $N \in \mathbb{N}$ there exists a linear projection operator $\jq:L^1(\Omega) \to \Q(\T_N)$ such that $\jq$ is locally $L^1$-stable, i.e., there exists a constant $c>0$ independent of $N$, such that
\begin{align*} 
\int_K |\jq q| \dx \leq c \int_{\mathcal{U}^N(K)} |q| \dx \qquad \text{for all} \ q \in L^1(\Omega) \ \text{and all} \ K \in \T_N.
\end{align*}
\end{assumption}

As shown in~\cite{DiKrSu:2013}, the local stability~\eqref{eq:localstab} implies the global $W^{1,s}$-stability, i.e., for each $s \in [1,\infty)$, there exists $c_s >0$, such that
\begin{align} \label{eq:globalstability}
\norm{\jdiv \vv}_{1,s} \leq c_s \norm{\vv}_{1,s} \qquad \text{for all} \ \vv \in W^{1,s}_0(\Omega)^d.
\end{align}
Similarly, we obtain the global stability of the operator $\jq:L^1(\Omega) \to \Q(\T_N)$. Moreover, thanks to Assumption~\ref{as:jdiv} the following discrete counterpart of the inf-sup condition holds; see~\cite[Lem.~4.1]{Belenki:12}.

\begin{proposition}
For all $s \in (1,\infty)$ there exists a constant $\beta_s>0$, independent of $N \in \mathbb{N}$, such that
\begin{align} \label{eq:discreteinfsup}
\sup_{\Vv \in \V(\T_N) \setminus \{\mathbf{0}\}} \frac{\int_\Omega Q \dive \Vv \dx}{\norm{\Vv}_{1,s}} \geq \beta_s \norm{Q}_{s'} \qquad \text{for all} \ Q \in \Q_0(\T_N), 
\end{align}
where $s'$ denotes, as usual, the H\"{o}lder conjugate of $s$, i.e., $\nicefrac{1}{s}+\nicefrac{1}{s'}=1$.
\end{proposition}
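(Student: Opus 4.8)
The plan is to use the divergence-preserving projection $\jdiv$ from Assumption~\ref{as:jdiv} as a Fortin-type operator, which transports the (well-known) continuous inf-sup inequality on $\Omega$ to the finite element pair $\V(\T_N)\times\Q_0(\T_N)$, with the loss in the constant controlled by the $W^{1,s}$-stability of $\jdiv$.

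First I would recall the continuous inf-sup inequality: since $\Omega$ is a bounded Lipschitz domain, for every $s\in(1,\infty)$ the divergence operator $\dive\colon W_0^{1,s}(\Omega)^d\to L_0^{s}(\Omega)$ admits a bounded linear right inverse (the Bogov\-skii/Ne\v{c}as construction), so together with the duality $\norm{Q}_{s'}=\sup_{g\in L_0^s(\Omega)}\int_\Omega Q g\dx/\norm{g}_s$ — where restricting to zero-mean $g$ is harmless because $Q$ itself has zero mean — there is $\tilde\beta_s>0$ with
\[
\sup_{\vv\in W_0^{1,s}(\Omega)^d\setminus\{\mathbf{0}\}}\frac{\int_\Omega Q\dive\vv\dx}{\norm{\vv}_{1,s}}\;\geq\;\tilde\beta_s\,\norm{Q}_{s'}\qquad\text{for all }Q\in L_0^{s'}(\Omega).
\]
Now fix $Q\in\Q_0(\T_N)\subset L_0^{s'}(\Omega)$ and an arbitrary $\vv\in W_0^{1,s}(\Omega)^d$, and set $\Vv:=\jdiv\vv\in\V(\T_N)$ (note $\V(\T_N)\subset W_0^{1,\infty}(\Omega)^d$, so all the norms $\norm{\cdot}_{1,s}$ in question are finite). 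Assumption~\ref{as:jdiv} supplies two facts: the divergence-preservation identity $\int_\Omega Q\dive\Vv\dx=\int_\Omega Q\dive\vv\dx$, valid since $Q\in\Q(\T_N)$; and, via the global stability~\eqref{eq:globalstability}, $\norm{\Vv}_{1,s}\leq c_s\norm{\vv}_{1,s}$ with $c_s$ independent of $N$. Hence, after replacing $\vv$ by $-\vv$ if necessary so that $\int_\Omega Q\dive\vv\dx\geq 0$ (the cases $\int_\Omega Q\dive\vv\dx=0$ and $\Vv=\mathbf 0$ being trivial),
\[
\frac{\int_\Omega Q\dive\Vv\dx}{\norm{\Vv}_{1,s}}=\frac{\int_\Omega Q\dive\vv\dx}{\norm{\Vv}_{1,s}}\;\geq\;\frac{1}{c_s}\,\frac{\int_\Omega Q\dive\vv\dx}{\norm{\vv}_{1,s}}.
\]
Enlarging the left-hand side to a supremum over all of $\V(\T_N)\setminus\{\mathbf{0}\}$, taking the supremum over $\vv\in W_0^{1,s}(\Omega)^d$ on the right, and invoking the continuous inf-sup inequality yields
\[
\sup_{\Vv\in\V(\T_N)\setminus\{\mathbf{0}\}}\frac{\int_\Omega Q\dive\Vv\dx}{\norm{\Vv}_{1,s}}\;\geq\;\frac{\tilde\beta_s}{c_s}\,\norm{Q}_{s'},
\]
so the claim holds with $\beta_s:=\tilde\beta_s/c_s$.

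I do not expect a genuine analytic obstacle here: once a divergence-preserving projection into $\V(\T_N)$ is available, the argument is essentially algebraic, and the reduction of the continuous inf-sup to zero-mean test functions is standard. The one point that must be handled with care — and the reason Assumption~\ref{as:jdiv} is formulated through the \emph{local} $W^{1,1}$-stability~\eqref{eq:localstab} — is the $N$-independence of $\beta_s$: the discrete constant degenerates unless $\norm{\jdiv\vv}_{1,s}\leq c_s\norm{\vv}_{1,s}$ holds with $c_s$ uniform in $N$, which is exactly what passing from~\eqref{eq:localstab} to the global bound~\eqref{eq:globalstability} provides.
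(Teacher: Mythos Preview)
Your argument is correct and is exactly the standard Fortin-operator route: transport the continuous inf-sup inequality to the discrete pair via the divergence-preserving, uniformly $W^{1,s}$-stable projection $\jdiv$. The paper itself does not supply a proof but simply cites \cite[Lem.~4.1]{Belenki:12}, whose proof proceeds in precisely this way, so your proposal matches the intended argument.
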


\begin{remark}
Examples of velocity-pressure pairs of finite elements satisfying the Assumptions~\ref{as:jdiv} and~\ref{as:jq}, as well as the nestedness of the discrete spaces, are given by:
\begin{itemize}
\item The lowest order Taylor--Hood element, i.e., the $\mathbb{P}_2-\mathbb{P}_1$ element,
\item The $\mathbb{P}_2-\mathbb{P}_0$ element;
\end{itemize}
we refer to~\cite{Boffi:13}.
\end{remark}

\begin{remark}
In order to work with discretely (rather than pointwise) divergence-free finite element velocity vectors, we will restrict ourselves in this work to the range $r>\frac{2d}{d+1}$; this is crucial for the bound~\eqref{eq:bbounded} below.   
\end{remark}

\subsection{The Galerkin approximation}
 
First of all, in order to inherit the discrete counterpart of the skew-symmetry of the convection term from the continuous case, we define as in~\cite{DiKrSu:2013, KreuzerSuli:2016} the trilinear form
\begin{align*} 
\mathcal{B}[\vv,\wv,\mathbf{h}]:=\frac{1}{2} \int_\Omega(\vv \otimes \mathbf{h}):\nabla \wv-(\vv\otimes \wv):\nabla \mathbf{h} \dx
\end{align*}
for all $(\vv,\wv,\mathbf{h}) \in W^{1,r}(\Omega)^d \times W^{1,r}(\Omega)^d\times W^{1,\tilde{r}'}(\Omega)^d$. Then, it can be shown that the trilinear form is bounded in the sense that 
\begin{align} \label{eq:bbounded}
\B[\vv,\wv,\mathbf{h}] \leq \Cb \norm{\vv}_{1,r} \norm{\wv}_{1,r} \norm{\mathbf{h}}_{1,\tilde{r}'}
\end{align}
for some constant $\Cb>0$; see, e.g.,~\cite[\S 3.3]{KreuzerSuli:2016}. Moreover, since $\tilde{r}' \geq r$, the following skew symmetry property holds:
\begin{align} \label{eq:skewsym}
\mathcal{B}[\uv;\vv,\vv]=0 \qquad \text{for all} \ \uv \in W^{1,r}(\Omega)^d, \ \vv \in W^{1,\tilde{r}'}(\Omega)^d. 
\end{align}
Then, the regularised discrete problem is to find $(\Uv_N^n,P_N^n) \in \V(\T_N) \times \Q_0(\T_N)$ such that
\begin{subequations} \label{eq:discreteproblem}
\begin{align}
\int_\Omega \Sv^n(\D \Uv_N^n): \D\Vv \dx + \B[\Uv_N^n;\Uv_N^n,\Vv]-\int_\Omega P_N^n \dive \Vv \dx&= \int_\Omega \fv \cdot \Vv \dx, \label{eq:dppde}\\
\int_\Omega Q \dive \Uv_N^n \dx&=0, \label{eq:dpic}
\end{align}
\end{subequations}
for all $\Vv \in \V(\T_N)$ and $Q \in \Q(\T_N)$. We note that~\eqref{eq:discreteproblem} has a solution, see~\cite[p.~1344]{KreuzerSuli:2016}.

\begin{remark} \label{rem:convectionpoly}
In the (semi-)discrete setting, the convection term can be reformulated as
\begin{align*}
\B[\Uv;\Uv,\wv]=\int_\Omega \mathrm{B}[\Uv,\Uv] \cdot \wv \dx, \qquad \Uv \in \V(\T_N), \ \wv \in W_0^{1,\tilde{r}'}(\Omega)^d,
\end{align*}
where $\mathrm{B}[\Uv,\Uv] \in \mathbb{P}_{2i^\star-1}(\T_N)$, which is the space of piecewise $\mathbb{P}_{2i^\star-1}$ functions on $\mathcal{T}_N$.
\end{remark}

\subsubsection{Linear approximation}
Assume that we have at our disposal an iterative solver for the discrete problem~\eqref{eq:discreteproblem}. For given $N,n \in \mathbb{N}$, let $\Phi_N^n:\V(\T_N) \times \Q_0(\T_N) \to \V(\T_N) \times \Q_0(\T_N)$ be the iteration function of one step of the iterative solver; i.e., 
\begin{align} \label{eq:discreteiteration}
(\Uv_N^{n,\ell+1},P_N^{n,\ell+1}):=\Phi_N^n(\Uv_N^{n,\ell},P_N^{n,\ell}),
\end{align}
where $(\Uv_N^{n,0},P_N^{n,0}) \in \V(\T_N) \times \Q_0(\T_N)$ is an initial guess. Of course, we need to impose some kind of convergence property on the iteration scheme~\eqref{eq:discreteiteration}. To that end, we define the \emph{discrete} residuals $\F_{N,pde}^n:\V(\T_N) \times \Q_0(\T_N) \to \V(\T_N)^\star$ and $\F_{N,ic}^n:\V(\T_N) \to \Q(\T_N)^\star$ by 
\begin{align} \label{eq:discreteresidualpde}
\dprod{\F_{N,pde}^n(\Uv,P),\Vv}:=\int_\Omega \Sv^n(\D \Uv):\D \Vv \dx + \mathcal{B}[\Uv,\Uv,\Vv]-\int_\Omega P \dive \Vv \dx - \int_\Omega \fv \cdot \Vv \dx, 
\end{align}
and
\begin{align} \label{eq:discreteresidualic}
\dprod{\F_{N,ic}^n(\Uv),Q}:=\int_\Omega Q \dive \Uv \dx, 
\end{align}
respectively. 

\begin{assumption} \label{as:discreteresidual}
For any $n,N \in \mathbb{N}$ we have that
\begin{align} \label{eq:discretedualnorm}
\norm{\F_{N,pde}^{n}(\uvnl,\pnl)}_{N,-1,r'} :=\sup_{\Vv \in \V(\T_N) \setminus\{\mathbf{0}\}} \frac{\dprod{\F_{N,pde}^{n}(\uvnl,\pnl),\Vv}}{\norm{\Vv}_{1,r}} \to 0 \qquad \text{as} \ \ell \to \infty,
\end{align}
and 
\begin{align*} 
\norm{\F^n_{N,ic}(\uvnl)}_{N,-1,r}:=\sup_{Q \in \Q(\T_N) \setminus\{0\}} \frac{\dprod{\F^n_{N,ic}(\uvnl),Q}}{\norm{Q}_{r'}} \to 0 \qquad \text{as} \ \ell \to \infty.
\end{align*}
\end{assumption}

\subsection{A posteriori error analysis}

In this subsection, we will recall an a posteriori error estimate from~\cite[\S 4]{KreuzerSuli:2016} in a slightly modified form: As has been already pointed out in the introduction, in contrast to~\cite{KreuzerSuli:2016}, we take into account inexact finite element approximations obtained by the iteration scheme~\eqref{eq:discreteiteration}. We further note that the proof of the a posteriori error estimate only requires some minor modifications in the analysis from~\cite[\S 4]{KreuzerSuli:2016}, and thus the details will be omitted. \\

Before we can state any results, we need some preparatory work. First, let us define the residual by 
\begin{align*}
\R(\uv,p,\Sv):=(\R^{pde}(\uv,p,\Sv),\R^{ic}(\uv)) \in W^{-1,\tilde{r}}(\Omega)^d \times L_0^{r}(\Omega),
\end{align*}
where, for $(\uv,p,\Sv) \in W_0^{1,r}(\Omega)^d \times L_0^{\tilde{r}}(\Omega) \times L^{r'}(\Omega;\Rsym)$,
\begin{align}
\dprod{\R^{pde}(\uv,p,\Sv),\vv}&:=\int_\Omega \Sv:\D \vv \dx + \mathcal{B}[\uv;\uv,\vv]-\int_\Omega p \dive \vv \dx - \int_\Omega \fv \cdot \vv \dx, \label{eq:rpde} \\ 
\dprod{\R^{ic}(\uv),q}&:=\int_\Omega q \dive \uv \dx \nonumber 
\end{align}
for any $\vv \in W_0^{1,\tilde{r}'}(\Omega)^d$ and $q \in L_0^{r'}(\Omega)$. Moreover, we adopt the following assumption concerning the existence of a quasi-interpolation operator.
\begin{assumption}
We assume that for any $N \in \mathbb{N}$ there exists a linear projection operator $\Pi_N:L^1(\Omega;\Rsym) \to \mathbb{P}_{i^\star-1}(\T_N;\Rsym)$, such that $\Pi_N$ is locally $L^1$-stable; i.e., there exists a constant $c>0$, independent of $N$, such that
\[
\int_K |\Pi_N \Sv| \dx \leq c \int_{\mathcal{U}^N(K)} |\Sv| \dx \qquad \text{for all} \ \Sv \in L^1(\Omega;\Rsym) \ \text{and all} \ K \in \T_N.
\]
\end{assumption}

\begin{remark}
For instance, we may define $\Pi_N:L^1(\Omega;\Rsym) \to \mathbb{P}_{i^\star-1}(\T_N;\Rsym)$ by 
\begin{align*}
\int_\Omega \Pi_N \Sv:\mathbf{D} \dx=\int_\Omega \Sv:\mathbf{D} \dx \qquad \text{for all} \ \mathbf{D} \in \mathbb{P}_{i^\star-1}(\T_N;\Rsym).
\end{align*}
\end{remark}

For the rest of the paper let $t$ and $\tilde{t}$ be such that
\begin{subequations} \label{eq:tdef}
\begin{align}
\frac{2d}{d+1}&<t<r &&\text{and}&&\tilde{t}:=\frac{1}{2}\frac{td}{d-t}, &&  \text{if} \ r \leq \frac{3d}{d+2},\\
t&=r && \text{and}&& \tilde{t}=t'=\tilde{r}=r', && \text{otherwise}.
\end{align}
\end{subequations}
We note that $\tilde{t}<\tilde{r}$ for $r \leq \frac{3d}{d+2}$. Then, for any $K \in \T_N$ and $(\Uv,P,\Sv) \in \V(\T_N) \times \Q_0(\T_N) \times L^{r'}(\Omega;\Rsym)$, we define the local error indicators as follows:
\begin{subequations}
\begin{align*}
\mathcal{E}_N^{pde}(\Uv,P,\Sv;K)&:=\norm{h_K (-\dive \Pi_N \Sv+\mathrm{B}[\Uv,\Uv]+ \nabla P-\fv)}_{\tilde{t},K}^{\tilde{t}} \\
& \qquad +\norm{h_{\partial K}^{\nicefrac{1}{\tilde{t}}} \jmp{\Pi_N \Sv-P \mathbf{I}}}_{\tilde{t},\partial{K}}^{\tilde{t}}+\norm{\Sv-\Pi_N \Sv}_{\tilde{t},K}^{\tilde{t}}, \\
\mathcal{E}_N^{ic}(\Uv;K)&:=\norm{\dive \Uv}_{t,K}^t, \\
\mathcal{E}_N(\Uv,P,\Sv;K)&:=\mathcal{E}_N^{pde}(\Uv,P,\Sv;K)+\mathcal{E}_N^{ic}(\Uv;K), 
\end{align*}
\end{subequations}
where $h_K:=|K|^{\nicefrac{1}{d}}$ and $h_{\partial K}:=|\partial K|^{\nicefrac{1}{d-1}}$. For $\mathcal{M} \subseteq \T_N$ we further define
\begin{align*}
\mathcal{E}_N^{pde}(\Uv,P,\Sv;\mathcal{M})&:=\sum_{K \in \mathcal{M}}\mathcal{E}_N^{pde}(\Uv,P,\Sv;K),\\
\mathcal{E}_N^{ic}(\Uv,\mathcal{M})&:=\norm{\dive \Uv}_{t; \Omega(\mathcal{M})}^t,
\end{align*}
where $\Omega(\mathcal{M})=\bigcup_{K \in \mathcal{M}} K$, and
\begin{align*}
\mathcal{E}_N^{pde}(\Uv,P,\Sv)&:=\mathcal{E}_N^{pde}(\Uv,P,\Sv;\T_N), \\
\mathcal{E}_N^{ic}(\Uv)&:=\mathcal{E}_N^{ic}(\Uv;\T_N)=\norm{\dive \Uv}_{t}^t.
\end{align*}
Finally, we set
\begin{align*}
\mathcal{E}_N(\Uv,P,\Sv):=\mathcal{E}_N^{pde}(\Uv,P,\Sv)+\mathcal{E}_{N}^{ic}(\Uv).
\end{align*}
Next, we introduce a graph approximation error as follows: for $\mathbf{D} \in L^r(\Omega;\Rsym)$ and $\Sv \in L^{r'}(\Omega;\Rsym)$ let
\begin{align} \label{eq:graphapproxerror}
\mathcal{E}_{\mathcal{A}}(\mathbf{D},\Sv):=\int_\Omega \inf_{(\ukappa,\udelta) \in \mathcal{A}(\x)} |\mathbf{D}-\ukappa|^r+|\Sv-\udelta|^{r'} \dx.
\end{align}
Now we are in a position to recall two important results from~\cite{KreuzerSuli:2016}. The second result is slightly modified on account of inexact finite element approximations of~\eqref{eq:discreteproblem}, meaning that we do not require $(\Uv,P)=(\Uv_N^n,P_N^n)$ in Theorem~\ref{thm:upperbound} below, in contrast to the corresponding results in~\cite{KreuzerSuli:2016}.  
 
\begin{lemma}[{\hspace{1sp}\cite[Lem.~4.2]{KreuzerSuli:2016}}] \label{lem:solution}
The triple $(\uv,p,\Sv) \in W_0^{1,r}(\Omega)^d \times L_0^{\tilde{r}}(\Omega) \times L^{r'}(\Omega;\Rsym)$ is a solution of~\eqref{eq:pde} if and only if 
\begin{align} \label{eq:equivprop}
\R(\uv,p,\Sv)=0 \quad \text{in} \ W^{-1,\tilde{t}}(\Omega)^d \times L_0^t(\Omega) \quad \text{and} \quad \mathcal{E}_\A(\D \uv,\Sv)=0.
\end{align}
\end{lemma}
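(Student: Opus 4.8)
The plan is to decouple the three components of~\eqref{eq:pde} and match each against the corresponding part of the pair $(\R,\mathcal{E}_\A)$, so that the asserted equivalence reduces to three separate if-and-only-if statements: the constitutive inclusion versus $\mathcal{E}_\A(\D\uv,\Sv)=0$; the incompressibility equation versus $\R^{ic}(\uv)=0$; and, once $\dive\uv=0$ is available, the momentum balance versus $\R^{pde}(\uv,p,\Sv)=0$.

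\emph{The constitutive part.} The integrand in~\eqref{eq:graphapproxerror} is pointwise nonnegative and, by~(A5), measurable in $\x$, so $\mathcal{E}_\A(\D\uv,\Sv)=0$ is equivalent to the vanishing, for a.e.\ $\x\in\Omega$, of $\inf_{(\ukappa,\udelta)\in\A(\x)}\big(|\D\uv(\x)-\ukappa|^r+|\Sv(\x)-\udelta|^{r'}\big)$. This infimum is clearly zero whenever $(\D\uv(\x),\Sv(\x))\in\A(\x)$. For the converse I would first observe that $\A(\x)$ is closed: if $\A(\x)\ni(\ukappa_k,\udelta_k)\to(\ukappa,\udelta)$, then~(A2) gives $(\udelta_k-\overline{\udelta}):(\ukappa_k-\overline{\ukappa})\ge 0$ for every $(\overline{\ukappa},\overline{\udelta})\in\A(\x)$, whence, letting $k\to\infty$ and using~(A3), $(\ukappa,\udelta)\in\A(\x)$. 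Since $(\ukappa,\udelta)\mapsto|\D\uv(\x)-\ukappa|^r+|\Sv(\x)-\udelta|^{r'}$ is continuous and coercive, and $\A(\x)$ is nonempty (by~(A1)) and closed, the infimum is attained at some $(\ukappa_0,\udelta_0)\in\A(\x)$; if its value is zero then necessarily $(\ukappa_0,\udelta_0)=(\D\uv(\x),\Sv(\x))$, so the constitutive inclusion holds a.e.

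\emph{The equations.} Since $\uv\in W_0^{1,r}(\Omega)^d$, the function $\dive\uv$ lies in $L^r(\Omega)\subseteq L^t(\Omega)$, has zero mean, and represents $\R^{ic}(\uv)$; hence $\R^{ic}(\uv)=0$ in $L_0^t(\Omega)$ is equivalent to $\dive\uv=0$ a.e., i.e.\ to the incompressibility equation of~\eqref{eq:pde}. Assuming $\dive\uv=0$, I would test the first equation of~\eqref{eq:pde} against $\vv\in\mathcal{D}(\Omega)^d$ and integrate by parts, using the symmetry of $\Sv$ to replace $\nabla\vv$ by $\D\vv$ in the stress term, and a further integration by parts together with $\dive\uv=0$ to write $\int_\Omega(\uv\otimes\uv):\nabla\vv\dx=-\B[\uv;\uv,\vv]$ --- the restriction $r>\frac{2d}{d+1}$ entering here, through a Sobolev embedding of $W_0^{1,r}(\Omega)^d$, to make $\B[\uv;\uv,\cdot]$ meaningful. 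After these substitutions the tested equation reads exactly $\dprod{\R^{pde}(\uv,p,\Sv),\vv}=0$, cf.~\eqref{eq:rpde}. Next I would check that each of the four terms in~\eqref{eq:rpde} defines a bounded linear functional of $\vv$ on $W_0^{1,\tilde t'}(\Omega)^d$ --- for the convective term by~\eqref{eq:bbounded} together with $W_0^{1,\tilde t'}(\Omega)^d\subseteq W_0^{1,\tilde r'}(\Omega)^d$, for $\int_\Omega\Sv:\D\vv\dx$ by $\tilde t'\ge r$, for $\int_\Omega p\dive\vv\dx$ by $\tilde t<\tilde r$, and for $\int_\Omega\fv\cdot\vv\dx$ by a Sobolev embedding, all of which are built into the choice~\eqref{eq:tdef} of $t,\tilde t$ (in both regimes for $r$) --- so that $\R^{pde}(\uv,p,\Sv)\in W^{-1,\tilde t}(\Omega)^d$. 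Since $\mathcal{D}(\Omega)^d$ is dense in $W_0^{1,\tilde t'}(\Omega)^d$, this upgrades ``$\dprod{\R^{pde}(\uv,p,\Sv),\vv}=0$ for all $\vv\in\mathcal{D}(\Omega)^d$'' to ``$\R^{pde}(\uv,p,\Sv)=0$ in $W^{-1,\tilde t}(\Omega)^d$''. Conversely, given $\R^{pde}(\uv,p,\Sv)=0$ and $\dive\uv=0$, one restricts to $\vv\in\mathcal{D}(\Omega)^d$ and runs the same identities backwards to recover the first equation of~\eqref{eq:pde} in $\mathcal{D}'(\Omega)^d$. Combining the three parts yields~\eqref{eq:equivprop}.

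\emph{The main obstacle} I expect is not conceptual but the exponent bookkeeping in the momentum part: one must simultaneously ensure that $\B[\uv;\uv,\cdot]$, $\int_\Omega\Sv:\D(\cdot)\dx$ and $\int_\Omega p\dive(\cdot)\dx$ are well defined and bounded on $W_0^{1,\tilde t'}(\Omega)^d$ --- precisely what the relations in~\eqref{eq:tdef} and the hypothesis $r>\frac{2d}{d+1}$ are tailored for --- while treating the two cases $r\le\frac{3d}{d+2}$ and $r>\frac{3d}{d+2}$ of~\eqref{eq:tdef} separately; the integration-by-parts manipulation of the convective term under the incompressibility constraint also has to be carried out with care. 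The remaining steps are routine.
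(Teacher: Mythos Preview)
The paper does not supply its own proof of this lemma; it is merely quoted from~\cite[Lem.~4.2]{KreuzerSuli:2016}, so there is nothing in the present paper to compare your proposal against. Your argument is essentially correct and follows the natural route: the closedness of $\A(\x)$ from (A2)--(A3) handles the constitutive part, the identification of $\R^{ic}(\uv)$ with $\dive\uv$ handles incompressibility, and the density of $\mathcal{D}(\Omega)^d$ in $W_0^{1,\tilde t'}(\Omega)^d$ together with the integration-by-parts identity $-\int_\Omega(\uv\otimes\uv):\nabla\vv\dx=\B[\uv;\uv,\vv]$ (valid once $\dive\uv=0$ and $r>\frac{2d}{d+1}$) handles the momentum equation. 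One small imprecision: you write ``$\tilde t<\tilde r$'' for the pressure term, but in the regime $r>\frac{3d}{d+2}$ one has $\tilde t=\tilde r$; the correct relation is $\tilde t\le\tilde r$, which still suffices.
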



\begin{theorem}[{\hspace{1sp}\cite[Thm.~4.3 \& Cor.~4.4]{KreuzerSuli:2016}}] \label{thm:upperbound}
For any $\Uv \in \V(\T_N)$ and $P \in \Q_0(\T_N)$ the following bounds hold true: 
\begin{equation} \label{eq:pdelocalbound}
\begin{aligned}
\dprod{\R^{pde}(\Uv,P,\Sv^n(\D \Uv)),\vv} &\leq C_1 \sum_{K \in \mathcal{T}_N} \mathcal{E}_N^{pde}(\Uv,P,\Sv^n(\D \Uv);K)^{\nicefrac{1}{\tilde{t}}} \norm{\vv}_{1,\tilde{t}',\mathcal{U}^N(K)} \\
& \quad + C_2 \norm{\F_{N,pde}^{n}(\Uv,P)}_{N,-1,\tilde{t}} \norm{\jdiv \vv}_{1,\tilde{t}'}
\end{aligned}
\end{equation} 
for all $\vv \in W_0^{1,\tilde{t}'}(\Omega)$, and
\begin{align} \label{eq:pdeglobalbound}
\norm{\R^{pde}(\Uv,P,\Sv^n(\D \Uv))}_{W^{-1,\tilde{t}}(\Omega)} \leq \tilde{C}_1 \mathcal{E}_N^{pde}(\Uv,P,\Sv^n(\D \Uv))^{\nicefrac{1}{\tilde{t}}}+\tilde{C}_2 \norm{\F_{N,pde}^{n}(\Uv,P)}_{N,-1,\tilde{t}},
\end{align}
where $C_1,\tilde{C}_1,C_2$, and $\tilde{C}_2$ depend only on the shape-regularity, $\tilde{t}$, and on the dimension $d$, however are independent of $n,N \in \mathbb{N}$. Moreover, 
\begin{align} \label{eq:iclocalbound}
\dprod{\R^{ic}(\Uv),Q} \leq \sum_{K \in \T_N} \mathcal{E}_{N}^{ic}(\Uv;K)^{\nicefrac{1}{t}}\norm{Q}_{t',K}
\end{align}
for any $Q \in L^{t'}(\Omega)$, and
\begin{align} \label{eq:icglobalbound}
\norm{\R^{ic}(\Uv)}_{t}^t \leq \mathcal{E}_N^{ic}(\Uv).
\end{align}
\end{theorem}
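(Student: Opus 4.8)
The plan is to derive the two pairs of estimates separately, treating the incompressibility residual directly and the PDE residual by the classical residual route, splitting an arbitrary test function through the quasi-interpolant $\jdiv$. The incompressibility bounds are immediate: since $\dprod{\R^{ic}(\Uv),Q}=\int_\Omega Q\dive\Uv\dx$, H\"older's inequality on each $K\in\T_N$ with exponents $t,t'$ gives \eqref{eq:iclocalbound}; and identifying $\R^{ic}(\Uv)$ with the function $\dive\Uv\in L_0^t(\Omega)$ (which has vanishing mean because $\Uv$ has zero trace) turns \eqref{eq:icglobalbound} into the equality $\norm{\R^{ic}(\Uv)}_t^t=\norm{\dive\Uv}_t^t=\mathcal{E}_N^{ic}(\Uv)$.

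For the PDE residual, fix $\vv\in W_0^{1,\tilde t'}(\Omega)^d$ and write $\vv=\jdiv\vv+\wv$ with $\wv:=\vv-\jdiv\vv$; note $\jdiv\vv\in\V(\T_N)\subseteq W_0^{1,\infty}(\Omega)^d$ and that $\wv$ has zero trace. Comparing \eqref{eq:rpde} with \eqref{eq:discreteresidualpde} shows that $\dprod{\R^{pde}(\Uv,P,\Sv^n(\D\Uv)),\Vv}=\dprod{\F_{N,pde}^{n}(\Uv,P),\Vv}$ for every $\Vv\in\V(\T_N)$, whence
\[
\dprod{\R^{pde}(\Uv,P,\Sv^n(\D\Uv)),\jdiv\vv}=\dprod{\F_{N,pde}^{n}(\Uv,P),\jdiv\vv}\leq\norm{\F_{N,pde}^{n}(\Uv,P)}_{N,-1,\tilde t}\,\norm{\jdiv\vv}_{1,\tilde t'},
\]
which supplies the discrete-residual terms in \eqref{eq:pdelocalbound} and \eqref{eq:pdeglobalbound}; this term vanishes when $(\Uv,P)=(\uvn,\pn)$, recovering the estimates of \cite{KreuzerSuli:2016}.

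It remains to handle $\dprod{\R^{pde}(\Uv,P,\Sv^n(\D\Uv)),\wv}$. Rewriting $\int_\Omega\Sv^n(\D\Uv):\D\wv\dx-\int_\Omega P\dive\wv\dx=\int_\Omega(\Sv^n(\D\Uv)-P\mathbf I):\D\wv\dx$, inserting $\Pi_N\Sv^n(\D\Uv)$, and using Remark~\ref{rem:convectionpoly} to write $\B[\Uv;\Uv,\wv]=\int_\Omega\mathrm B[\Uv,\Uv]\cdot\wv\dx$ (legitimate since $\tilde t'\geq\tilde r'$, so $\wv\in W_0^{1,\tilde r'}(\Omega)^d$), one integrates the piecewise-polynomial field $\Pi_N\Sv^n(\D\Uv)-P\mathbf I$ by parts elementwise: the $\partial\Omega$-contributions vanish as $\wv$ has zero trace, and the interior-face contributions assemble into the jumps $\jmp{\Pi_N\Sv^n(\D\Uv)-P\mathbf I}$. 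This produces the element residual $-\dive\Pi_N\Sv^n(\D\Uv)+\mathrm B[\Uv,\Uv]+\nabla P-\fv$ tested against $\wv$, the jump terms tested against $\wv$ on interior faces, and the oscillation $\int_\Omega(\Sv^n(\D\Uv)-\Pi_N\Sv^n(\D\Uv)):\D\wv\dx$. Applying H\"older elementwise, distributing powers of $h_K$ and $h_{\partial K}$ so as to reproduce $\mathcal{E}_N^{pde}(\Uv,P,\Sv^n(\D\Uv);K)^{\nicefrac1{\tilde t}}$, and invoking the local stability/approximation properties $\norm{h_K^{-1}\wv}_{\tilde t',K}+\norm{\nabla\wv}_{\tilde t',K}+\norm{h_{\partial K}^{-\nicefrac1{\tilde t}}\wv}_{\tilde t',\partial K}\leq c\norm{\nabla\vv}_{\tilde t',\mathcal U^N(K)}$ (with $c$ independent of $N$, following from the local $W^{1,1}$-stability \eqref{eq:localstab}, reproduction of affine fields by $\jdiv$, and scaling/trace estimates on shape-regular patches) yields \eqref{eq:pdelocalbound}. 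Summing over $K$, using the discrete H\"older inequality in $\ell^{\tilde t}$–$\ell^{\tilde t'}$ together with the bounded overlap $\sum_{K\in\T_N}\norm{\nabla\vv}_{\tilde t',\mathcal U^N(K)}^{\tilde t'}\leq C\norm{\nabla\vv}_{\tilde t'}^{\tilde t'}$, the global $W^{1,\tilde t'}$-stability \eqref{eq:globalstability} of $\jdiv$ for the discrete-residual term, and taking the supremum over $\vv$ with $\norm{\vv}_{1,\tilde t'}=1$ gives \eqref{eq:pdeglobalbound}; all constants depend only on shape-regularity, $\tilde t$ and $d$.

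The main obstacle is upgrading the $W^{1,1}$-local stability \eqref{eq:localstab} of $\jdiv$ to the above $W^{1,\tilde t'}$-local approximation estimates, uniformly in $N$ and including the fractional trace term on $\partial K$: this requires combining polynomial reproduction, Poincar\'e-type and trace inequalities, and careful scaling on shape-regular patches, and it is precisely here that the conditions $t>\frac{2d}{d+1}$ and the definition \eqref{eq:tdef} of $\tilde t$ enter, ensuring $\tilde t'\in(1,\infty)$ and the relevant Sobolev embeddings on the patches. The elementwise integration by parts with correct treatment of interior jumps and vanishing boundary terms, and the patch-overlap bookkeeping in the passage from local to global, are routine but must be carried out with care; these are exactly the places where \cite[\S4]{KreuzerSuli:2016} is reused, the only genuine change being that $(\Uv,P)$ need not solve \eqref{eq:discreteproblem}, whence the extra discrete-residual term.
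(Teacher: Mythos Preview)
Your proposal is correct and follows essentially the same route as the paper: split $\vv=\jdiv\vv+(\vv-\jdiv\vv)$, identify the $\jdiv\vv$-contribution with the discrete residual $\dprod{\F_{N,pde}^{n}(\Uv,P),\jdiv\vv}$ (this being the only novelty over \cite{KreuzerSuli:2016}), and treat the remainder by inserting $\Pi_N\Sv^n(\D\Uv)$, elementwise integration by parts, H\"older, the local stability of $\jdiv$, interpolation estimates, and a scaled trace theorem; the incompressibility bounds are immediate from H\"older. One small caveat: your phrase ``reproduction of affine fields by $\jdiv$'' is slightly off, since $\jdiv$ maps into a space with zero boundary trace and need not preserve global affines---the local approximation estimate you need actually follows from the local $W^{1,1}$-stability \eqref{eq:localstab} alone via a Bramble--Hilbert/Poincar\'e argument on patches, as in \cite{DiKrSu:2013}.
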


\begin{proof}
By the definition of $\R^{pde}$, cf.~\eqref{eq:rpde}, and Remark~\ref{rem:convectionpoly} we have that
\begin{align*}
\dprod{\R^{pde}(\Uv,P,\Sv^n(\D \Uv)),\vv}=\int_\Omega \Sv^n(\D \Uv):\D \vv+\mathrm{B}[\Uv,\Uv]\cdot \vv-P\dive \vv -\fv\cdot \vv \dx
\end{align*}
for all $\vv \in W_0^{1,\tilde{t}'}$. Then, since $\jdiv:W_0^{1,\tilde{t}'}(\Omega)^d \to \V(\T_N)$, it follows from~\eqref{eq:discreteresidualpde} that
\begin{align*}
\dprod{\R^{pde}(\Uv,P,\Sv^n(\D \Uv),\vv}&=\int_\Omega \Pi_N \Sv^n(\D \Uv):\D(\vv-\jdiv \vv)+\mathrm{B}[\Uv,\Uv] \cdot (\vv -\jdiv \vv) \\
& \quad -P\dive(\vv-\jdiv \vv) \dx -\int_\Omega\fv \cdot (\vv-\jdiv \vv)\dx \\ 
&\quad + \int_\Omega (\Sv^n(\D \Uv)-\Pi_N\Sv^n(\D \Uv)):\D (\vv-\jdiv \vv) \dx \\
& \quad + \dprod{\F_{N,pde}^n(\Uv,P),\jdiv \vv}.
\end{align*}
We note that, compared to the proof of~\cite[Thm.~4.3]{KreuzerSuli:2016}, we simply had to add the last term $\dprod{\F_{N,pde}^n(\Uv,P),\jdiv \vv}$ on account of general discrete elements $\Uv \in \V(\T_N)$ and $P \in \Q_0(\T_N)$; concerning this term, \eqref{eq:discretedualnorm} and~\eqref{eq:globalstability} imply that
\begin{align*}
\dprod{\F_{N,pde}^n(\Uv,P),\jdiv \vv} &\leq \norm{\F_{N,pde}^n(\Uv,P)}_{N,-1,\tilde{t}}\norm{\jdiv \vv}_{1,\tilde{t}'} \leq c_{\tilde{t}'} \norm{\F_{N,pde}^n(\Uv,P)}_{N,-1,\tilde{t}}\norm{\vv}_{1,\tilde{t}'}.
\end{align*}
The remainder can be bounded by standard techniques in a posteriori error analysis; indeed, similarly as was shown in the proof of~\cite[Thm.~4.3]{KreuzerSuli:2016}, we have by a local application of the divergence theorem, H\"{o}lder's inequality, the local stability of $\jdiv$, cf.~\eqref{eq:localstab}, an interpolation estimate on $\V(\T_N)$, and a scaled trace theorem that
\begin{multline*}
\dprod{\R^{pde}(\Uv,P,\Sv^n(\D \Uv)),\vv}  \\
\begin{aligned}
&\leq C \Big(\sum_{K \in \T_N} \Big\{\norm{h_K (-\dive \Pi_N\Sv^n(\D \Uv) +\mathrm{B}[\Uv,\Uv]+\nabla P-\fv)}_{\tilde{t},K} \norm{\vv}_{1,\tilde{t}',\mathcal{U}^N(K)}\\
& \quad +\norm{h_K^{1/\tilde{t}} \jmp{\Pi_N\Sv^n(\D \Uv)-P\mathbf{I}}}_{\tilde{t},\partial K} \norm{\vv}_{1,\tilde{t}',\mathcal{U}^N(K)} \\
& \quad +\norm{\Sv^n(\D \Uv)-\Pi_N \Sv^n(\D \Uv)}_{\tilde{t},K}\norm{\vv}_{1,\tilde{t}',K}\Big\}\\
& \quad + \norm{\F_{N,pde}^n(\Uv,P))}_{N,-1,\tilde{t}}\norm{\jdiv \vv}_{1,\tilde{t}'}\Big)
\end{aligned} 
\end{multline*}
for some constant $C>0$ independent of $n,N \in \mathbb{N}$;
both~\eqref{eq:pdelocalbound} and~\eqref{eq:pdeglobalbound} can be derived from this inequality by further applications of H\"{o}lder's inequality and by the shape-regularity of the mesh. The remainder, i.e.,~\eqref{eq:iclocalbound} and~\eqref{eq:icglobalbound}, follow immediately from the definitions of $\R^{ic}$ and $\mathcal{E}^{ic}$, and H\"{o}lder's inequality.
\end{proof}


\section{Convergent adaptive iterative linearised finite element method} \label{sec:ailfem}

In the following, we will present an adaptive algorithm which exploits an interplay of the iterative linear method~\eqref{eq:discreteiteration}, adaptive mesh refinement, and graph approximation. In particular, we modify the AFEM and the corresponding analysis from~\cite{KreuzerSuli:2016} by taking into account inexact finite element approximations of~\eqref{eq:discreteproblem} obtained by the iteration scheme~\eqref{eq:discreteiteration}. We note that this can be done in a rather straightforward manner, and that the proofs only require minor adaptions; therefore, we will only give rough sketches of the proofs and highlight where the main differences occur. \\

For the remainder of this work, we assume that there exists a \emph{computable} bound $\eta_{\A,n}:L^r(\Omega;\Rsym)\to \mathbb{R}_{\geq 0}$ such that
\begin{align*}
\mathcal{E}_\A(\mathbf{D},\Sv^n(\mathbf{D})) \leq \eta_{\A,n}(\mathbf{D}) \qquad \text{for all} \ \mathbf{D} \in L^r(\Omega;\Rsym).   
\end{align*}
For instance, we may define 
\begin{align*}
\eta_{\A,n}(\mathbf{D}):=\int_\Omega |\Sv^n(\mathbf{D})-\Sv^\star(\mathbf{D})|^{r'} \dx.
\end{align*}

For the purpose of the convergence of the adaptive iterative linearised finite element method (AILFEM), cf.~Algorithm~\ref{alg:AILFEM}, we need to further impose the following assumption.

\begin{assumption} \label{as:graphapp2}
For every $\epsilon>0$ there exists a positive integer $\widehat{n} \in \mathbb{N}$ such that
\[\eta_{\A,n}(\D \vv)<\epsilon \qquad \text{for all} \ \vv \in W_0^{1,r}(\Omega)^d \ \text{and} \ n > \widehat{n}.\]
\end{assumption}
We note that this hypothesis is stronger than the related one~\cite[Assumption~5.6]{KreuzerSuli:2016}. In particular, in contrast to~\cite{KreuzerSuli:2016}, we do \emph{not} want to include the term $\mathcal{E}_\A(\mathbf{D},\Sv^n(\mathbf{D}))$ in our algorithm below, since it is, in general, not computable; in this case, however, \cite[Assumption~5.6]{KreuzerSuli:2016} is not sufficient to prove Step 4 of the proof of Theorem~\ref{thm:main} in Appendix~\ref{app:1}.\\

In Algorithm~\ref{alg:AILFEM} and below we use the following abbreviations:
\begin{align*} 
\F_{N,pde}^{n,\ell}&:=\F_{N,pde}^n(\Uv_N^{n,\ell},P_{N}^{n,\ell}), \\ 
\F_{N,ic}^{n,\ell}&:=\F_{N,ic}^n(\uvnl), \\
\mathcal{E}_N(n,\ell)&:=\mathcal{E}_N(\Uv_N^{n,\ell},P_{N}^{n,\ell}),\\
\mathcal{E}_\A(N,n,\ell)&:=\mathcal{E}_\A(\D \Uv_N^{n,\ell},\Sv^n(\D \Uv_N^{n,\ell})), \nonumber \\ 
\eta_{\A}(N,n,\ell)&:=\eta_{\A,n}(\D \Uv_N^{n,\ell}). \nonumber
\end{align*}
Moreover, $\zeta:\mathbb{N} \to \mathbb{R}_{>0}$ is any given function such that 
\begin{align} \label{eq:zetafunction}
\zeta(N) \to 0 \qquad \text{as} \ N \to \infty.
\end{align}

\begin{algorithm} 
\caption{Adaptive iterative linearised finite element method (AILFEM)}
\label{alg:AILFEM}
\begin{algorithmic}[1]
\State Input initial guesses $\Uv_0^{1,0} \in \V(\T_0)$ and $P_0^{1,0} \in \Q_0(\T_0)$ for an initial triangulation $\T_0$ of $\Omega$.
\State Let $N=\ell=0$ and $n_0=1$.
\Loop
\While {$\norm{\F_{N,pde}^{n_N,\ell}}_{N,-1,t'}+\norm{\F_{N,ic}^{n_N,\ell}}_{N,-1,r}\geq \inf\{\max\{\mathcal{E}_N(n_N,\ell),\eta_{\A}(N,n_N,\ell)\},\zeta(N)\}$}
\State Let $(\Uv_N^{n_N,\ell+1},P_N^{n_N,\ell+1})=\Phi_N^n(\Uv_N^{n_N,\ell},P_N^{n_N,\ell})$.
\State $\ell \leftarrow \ell+1$.
\EndWhile
\State Set $\ell_N:=\ell$.
\If {$\mathcal{E}_N(n_N,\ell_N) \geq \eta_{\A}(N,n_N,\ell_N)$}
\State MARK and REFINE the mesh to obtain $\T_{N+1}$ from $\T_{N}$.
\State Let $n_{N+1}=n_N$.
\Else
\State Let $n_{N+1}=n_N+1$.
\EndIf
\State $N \leftarrow N+1$.
\EndLoop
\end{algorithmic}
\end{algorithm}

\begin{remark} \label{rem:dualnormequiv}
Since $t \leq \max\{\tilde{t}',r\}$ and as $\Omega$ is a bounded domain, there exist constants $C_r,C_t >0$ independent of $N,n,\ell$ such that
\[
\norm{\F_{N,pde}^{n,\ell}}_{N,-1,r'} \leq C_r \norm{\F_{N,pde}^{n,\ell}}_{N,-1,t'} \qquad \text{and} \qquad \norm{\F_{N,pde}^{n,\ell}}_{N,-1,\tilde{t}} \leq C_t \norm{\F_{N,pde}^{n,\ell}}_{N,-1,t'}. 
\]
Similarly, 
\[
\norm{\F_{N,ic}^{n,\ell}}_{N,-1,t} \leq C \norm{\F_{N,ic}^{n,\ell}}_{N,-1,r}
\] 
for some constant $C>0$ independent of $N,n,\ell$. Moreover, in view of Assumption~\ref{as:discreteresidual} and the equivalence of the norms in finite-dimensional spaces, the while-loop in Algorithm~\ref{alg:AILFEM} (lines 4--7) terminates after finitely many steps.
\end{remark}

For the subroutine MARK we may either employ the Maximum or D\"orfler strategy, see~\cite{Babuska:1984} and ~\cite{Doerfler:96}, respectively. Subsequently, we use a refinement procedure, which bisects all marked elements at least once, removes any hanging nodes, and guarantees the shape-regularity of the sequence of meshes $\{\mathcal{T}_N\}_N$. For instance, but not exclusively, we can apply Newest Vertex Bisection, cf.~\cite{Mitchell:91}. \\

We need one more assumption for the convergence of the sequence of iterates generated:

\begin{assumption} \label{ass:bounded}
Let $\{(\Uv_N^{n_N,\ell_N},P_N^{n_N,\ell_N})\}_{N} \subset W_0^r(\Omega)^d \times L_0^{\tilde{r}}(\Omega)$ be the sequence generated by Algorithm~\ref{alg:AILFEM}. Then, at least one of the following two boundedness properties holds:
\begin{itemize}
\item There exist constants $c_p,\tilde{c}_p$ and $s \in [1,r)$ such that
\begin{align} \label{eq:assufpAILFEM}
\dprod{\F_{N,ic}^{n_N,\ell_N},P_N^{n_N,\ell_N}} \leq c_{p}+\tilde{c}_p \norm{\Uv_N^{n_N,\ell_N}}_{1,r}^{s} \qquad \text{for all} \ N=0,1,2,\dotsc;
\end{align}
\item There exists a constant $c_U>0$ such that
\begin{align} \label{eq:asubounded}
\norm{\Uv^{n_N}_N-\Uv_N^{n_N,\ell_N}}_{1,r} \leq c_U \qquad \text{for all} \ N=0,1,2,\dotsc,
\end{align}
where $\Uv^{n_N}_N$ is the velocity vector of a solution of~\eqref{eq:discreteproblem}. 
\end{itemize}
\end{assumption}

\begin{remark} \label{rem:graphas}
We emphasize that~\eqref{eq:assufpAILFEM} is certainly satisfied if the sequence $\{P_N^{n_N,\ell_N}\}_{N}$ is uniformly bounded in $L^{r'}(\Omega)$, or if $\Uv_N^{n_N,\ell_N}$, for $N=0,1,2,\dotsc$, are discretely divergence-free; in the latter case, we indeed have that $\dprod{\F_{N,ic}^{n_N,\ell_N},P_N^{n_N,\ell_N}}=0$.
\end{remark}

The purpose of Assumption~\ref{ass:bounded} is to guarantee the uniform boundedness of $\norm{\Uv_N^{n_N,\ell_N}}_{1,r}$, and, in turn by~\eqref{eq:boundedS}, the uniform boundedness of $\norm{\Sv^{n_N}(\D\Uv_N^{n_N,\ell_N})}_{r'}$.

\begin{lemma} \label{lem:uniformbound}
Given Assumption~\ref{ass:bounded}, we have that 
\begin{align} \label{eq:uniformbound}
\norm{\Uv_N^{n_N,\ell_N}}_{1,r}+\norm{\Sv^{n_N}(\D\Uv_N^{n_N,\ell_N})}_{r'} \leq c_\fv \qquad \text{for all} \ N=0,1,2,\dotsc,
\end{align}
where $c_\fv$ depends on the source function $\fv$, the constants $\tilde{c}_1, \, \tilde{c}_2$ from Assumption~\ref{as:graphapp}, the functions $\tilde{k}$ and $\tilde{m}$ from Assumption~\ref{as:graphapp}, the constants $c_p$ and $\tilde{c}_p$ from Assumption~\ref{ass:bounded}, $\zeta_{\max}:=\max_{N \in \mathbb{N}} \zeta(N)$, and the constant $C_r$ from Remark~\ref{rem:dualnormequiv}. 
\end{lemma}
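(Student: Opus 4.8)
The plan is to test the discrete residual equation~\eqref{eq:discreteresidualpde} with $\Vv = \Uv_N^{n_N,\ell_N}$ and exploit the coercivity estimate~\eqref{eq:monotoneS} together with the skew-symmetry~\eqref{eq:skewsym} of the trilinear form $\B$, then close the argument by absorbing the low-order terms. Concretely, evaluating $\dprod{\F_{N,pde}^{n_N,\ell_N},\Uv_N^{n_N,\ell_N}}$ and rearranging gives
\begin{align*}
\int_\Omega \Sv^{n_N}(\D \Uv_N^{n_N,\ell_N}):\D \Uv_N^{n_N,\ell_N} \dx
&= \dprod{\F_{N,pde}^{n_N,\ell_N},\Uv_N^{n_N,\ell_N}}
- \B[\Uv_N^{n_N,\ell_N};\Uv_N^{n_N,\ell_N},\Uv_N^{n_N,\ell_N}] \\
&\quad + \int_\Omega P_N^{n_N,\ell_N} \dive \Uv_N^{n_N,\ell_N} \dx
+ \int_\Omega \fv \cdot \Uv_N^{n_N,\ell_N} \dx.
\end{align*}
By~\eqref{eq:skewsym} the convective term vanishes. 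The pressure term equals $\dprod{\F_{N,ic}^{n_N,\ell_N},P_N^{n_N,\ell_N}}$ by~\eqref{eq:discreteresidualic}; under the first alternative of Assumption~\ref{ass:bounded} it is bounded by $c_p + \tilde c_p \norm{\Uv_N^{n_N,\ell_N}}_{1,r}^s$ with $s<r$. The source term is controlled by $\norm{\fv}_{r'}\norm{\Uv_N^{n_N,\ell_N}}_{1,r}$ via Hölder and Poincaré. The residual term is bounded, using the stopping criterion of the while-loop and Remark~\ref{rem:dualnormequiv}, by $\norm{\F_{N,pde}^{n_N,\ell_N}}_{N,-1,r'}\norm{\Uv_N^{n_N,\ell_N}}_{1,r} \le C_r\,\zeta_{\max}\,\norm{\Uv_N^{n_N,\ell_N}}_{1,r}$, since on termination the left-hand side of line~4 is strictly below $\inf\{\cdot,\zeta(N)\} \le \zeta_{\max}$. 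On the left-hand side, \eqref{eq:monotoneS} yields $\tilde c_2 \norm{\D \Uv_N^{n_N,\ell_N}}_r^r - \norm{\tilde m}_1$, and Korn's inequality in $W_0^{1,r}$ converts $\norm{\D \Uv_N^{n_N,\ell_N}}_r$ into a constant times $\norm{\Uv_N^{n_N,\ell_N}}_{1,r}$. Putting these together gives an inequality of the form $c\,\norm{\Uv_N^{n_N,\ell_N}}_{1,r}^r \le C_1 + C_2 \norm{\Uv_N^{n_N,\ell_N}}_{1,r} + \tilde c_p \norm{\Uv_N^{n_N,\ell_N}}_{1,r}^s$ with $1 \le s < r$; since the right-hand side grows strictly slower than the left, Young's inequality absorbs the lower-order terms and delivers the uniform bound on $\norm{\Uv_N^{n_N,\ell_N}}_{1,r}$. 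The bound on $\norm{\Sv^{n_N}(\D \Uv_N^{n_N,\ell_N})}_{r'}$ then follows directly from~\eqref{eq:boundedS}: $\norm{\Sv^{n_N}(\D \Uv_N^{n_N,\ell_N})}_{r'} \le \tilde c_1 \norm{\D\Uv_N^{n_N,\ell_N}}_r^{r-1} + \norm{\tilde k}_{r'}$, using $(a+b)^{r'} \lesssim a^{r'}+b^{r'}$ and $(r-1)r' = r$.

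Under the second alternative~\eqref{eq:asubounded} of Assumption~\ref{ass:bounded}, one instead uses the triangle inequality $\norm{\Uv_N^{n_N,\ell_N}}_{1,r} \le \norm{\Uv_N^{n_N}}_{1,r} + c_U$ and the a priori bound on the exact discrete solution $\Uv_N^{n_N}$ of~\eqref{eq:discreteproblem}, which is obtained by the same energy testing argument applied to the exact discrete equation (where now the pressure term is literally zero since $\Uv_N^{n_N}$ is discretely divergence-free and $P_N^{n_N} \in \Q_0(\T_N) \subset \Q(\T_N)$), yielding a bound depending only on $\fv$, $\tilde c_1,\tilde c_2,\tilde k,\tilde m$; adding $c_U$ gives the claim, and the stress bound follows as before.

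The main obstacle is the careful bookkeeping of the residual term: one must verify that at the termination index $\ell_N$ the while-loop condition in line~4 has indeed failed, so that $\norm{\F_{N,pde}^{n_N,\ell_N}}_{N,-1,t'}+\norm{\F_{N,ic}^{n_N,\ell_N}}_{N,-1,r}$ is controlled by $\inf\{\max\{\mathcal E_N(n_N,\ell_N),\eta_\A(N,n_N,\ell_N)\},\zeta(N)\} \le \zeta(N) \le \zeta_{\max}$, independently of $N$; this is precisely what makes $c_\fv$ depend on $\zeta_{\max}$ and $C_r$ rather than blow up. The other delicate point is ensuring the absorption inequality is uniform in $N$, which requires that all constants entering (Korn, Poincaré, the bound on the source, $C_r$, $\zeta_{\max}$, the constants from Assumptions~\ref{as:graphapp} and~\ref{ass:bounded}) are genuinely $N$-independent; this is the case by hypothesis. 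Everything else is a routine application of Hölder's inequality, Korn's inequality, and Young's inequality.
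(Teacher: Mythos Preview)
Your proposal is correct and follows essentially the same route as the paper's proof: test~\eqref{eq:discreteresidualpde} with $\Vv=\Uv_N^{n_N,\ell_N}$, use the skew-symmetry~\eqref{eq:skewsym}, identify the pressure term with $\dprod{\F_{N,ic}^{n_N,\ell_N},P_N^{n_N,\ell_N}}$ and bound it via~\eqref{eq:assufpAILFEM}, control the discrete residual by $C_r\zeta_{\max}$ through the stopping criterion and Remark~\ref{rem:dualnormequiv}, and then conclude from~\eqref{eq:monotoneS} since $s<r$; the stress bound follows from~\eqref{eq:boundedS}. For the second alternative~\eqref{eq:asubounded} the paper simply cites the uniform a~priori bound on $\norm{\Uv_N^{n_N}}_{1,r}$ from~\cite[\S3.4]{KreuzerSuli:2016}, while you spell out how that bound is obtained---but the content is the same.
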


\begin{proof}
By our observation from before, we only need to verify the uniform boundedness of the generated sequence of velocity vectors. The assumption~\eqref{eq:asubounded} immediately implies the uniform boundedness of $\norm{\Uv_N^{n_N,\ell_N}}_{1,r}$, since $\norm{\uvn}_{1,r}$ is uniformly bounded in $n$ and $N$, cf.~\cite[\S 3.4]{KreuzerSuli:2016}.

Now let~\eqref{eq:assufpAILFEM} be given. For the proof of this case we will proceed along the lines of~\cite[\S 3.4]{KreuzerSuli:2016}. By the definitions~\eqref{eq:discreteresidualpde} and \eqref{eq:discreteresidualic} and the skew-symmetry~\eqref{eq:skewsym} we have that
\begin{align*}
\int_\Omega \Sv^{n_N}(\D \Uv_N^{n_N,\ell_N}):\D \Uv_N^{n_N,\ell_N} \dx= \int_\Omega \fv \cdot \Uv_N^{n_N,\ell_N} \dx+ \dprod{\F_{N,ic}^{n_N,\ell_N},P_N^{n_N,\ell_N}}+\dprod{\F_{N,pde}^{n_N,\ell_N},\Uv_N^{n_N,\ell_N}};
\end{align*} 
we note the latter two summands arise, in contrast to~\cite[\S 3.4]{KreuzerSuli:2016}, since we are considering inexact finite element approximations of~\eqref{eq:discreteproblem} that are possibly not discretely divergence-free. Invoking the assumption~\eqref{eq:assufpAILFEM} and the modus operandi of Algorithm~\ref{alg:AILFEM}, we obtain the upper bound
\begin{align*}
\int_\Omega \Sv^{n_N}(\D \Uv_N^{n_N,\ell_N}):\D \Uv_N^{n_N,\ell_N} \dx \leq (\norm{\fv}_{-1,r'}+C_r\zeta_{\max})\norm{\Uv_N^{n_N,\ell_N}}_{1,r}+c_p+\tilde{c}_p \norm{\Uv_N^{n_N,\ell_N}}_{1,r}^s.
\end{align*}
Consequently, since $1 \leq s <r$, Assumption~\ref{as:graphapp}, cf.~\eqref{eq:monotoneS}, implies that $\norm{\Uv_N^{n_N,\ell_N}}_{1,r}$ is uniformly bounded, and thus the claim is verified. 
\end{proof}

Finally, we will state the convergence property of the AILFEM, cf.~Algorithm~\ref{alg:AILFEM}.
\begin{theorem} [{\hspace{1sp}\cite[Cor.~5.8]{KreuzerSuli:2016}}] \label{thm:main}
Let $\{(\Uv_N^{n_N,\ell_N},P_N^{n_N,\ell_N})\}_{N} \subset W_0^r(\Omega)^d \times L_0^{\tilde{r}}(\Omega)$ be the sequence generated by Algorithm~\ref{alg:AILFEM}. Given all the assumptions made so far, we have that there exists a subsequence of $\{(\Uv_N^{n_N,\ell_N},P_N^{n_N,\ell_N},\Sv^{n_N}(\D \Uv_N^{n_N,\ell_N}))\}_{N}$ with weak limit $(\uv_\infty,p_\infty,\Sv_\infty) \in W_0^{1,r}(\Omega)^d \times L_0^{\tilde{r}}(\Omega) \times L^{r'}(\Omega; \Rsym)$ that solves~\eqref{eq:pde}. 
\end{theorem}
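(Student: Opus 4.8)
The plan is to follow the compactness-and-Minty-trick strategy that underpins \cite{KreuzerSuli:2016}, but now tracking the iteration index $\ell$ through the inexact residuals. First I would invoke Lemma~\ref{lem:uniformbound} to obtain, from Assumption~\ref{ass:bounded}, the uniform bound \eqref{eq:uniformbound} on $\norm{\Uv_N^{n_N,\ell_N}}_{1,r}+\norm{\Sv^{n_N}(\D\Uv_N^{n_N,\ell_N})}_{r'}$. Combined with the discrete inf-sup condition \eqref{eq:discreteinfsup}, which controls $\norm{P_N^{n_N,\ell_N}}_{\tilde{r}}$ in terms of $\norm{\F_{N,pde}^{n_N,\ell_N}}_{N,-1,r'}$ (itself small by the stopping criterion and Remark~\ref{rem:dualnormequiv}) plus the remaining terms of the discrete PDE residual, this yields uniform boundedness of the full triple $(\Uv_N^{n_N,\ell_N},P_N^{n_N,\ell_N},\Sv^{n_N}(\D\Uv_N^{n_N,\ell_N}))$ in $W_0^{1,r}(\Omega)^d\times L_0^{\tilde{r}}(\Omega)\times L^{r'}(\Omega;\Rsym)$. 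Hence a subsequence converges weakly to some $(\uv_\infty,p_\infty,\Sv_\infty)$ in that space, and by Rellich--Kondrachov (using $r>\tfrac{2d}{d+1}$, so the convection term behaves) we also get strong $L^r$ convergence of the velocities and, crucially, the convergence of the convective term $\mathrm{B}[\Uv_N^{n_N,\ell_N},\Uv_N^{n_N,\ell_N}]$ in the relevant dual.

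Next I would split the argument by the dichotomy in Algorithm~\ref{alg:AILFEM}: infinitely many mesh refinements occur (the $n_N$ stabilises) or infinitely many graph-approximation updates occur ($n_N\to\infty$). In the first case, along the refining subsequence the local error indicators $\mathcal{E}_N^{pde}$ and $\mathcal{E}_N^{ic}$ must vanish in the limit, by the standard AFEM argument (the marked elements have the largest indicators, the mesh-size function tends to zero on the limiting "free" region, and on the complement the indicators are controlled pointwise); together with the stopping-criterion bound $\norm{\F_{N,pde}^{n_N,\ell_N}}_{N,-1,t'}+\norm{\F_{N,ic}^{n_N,\ell_N}}_{N,-1,r}<\zeta(N)\to 0$ and the upper bounds \eqref{eq:pdelocalbound}--\eqref{eq:icglobalbound} of Theorem~\ref{thm:upperbound}, this forces $\R^{pde}(\uv_\infty,p_\infty,\Sv_\infty)=0$ and $\R^{ic}(\uv_\infty)=0$ in the limit. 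In the second case, where $n_N\to\infty$, the stopping criterion gives $\max\{\mathcal{E}_N(n_N,\ell_N),\eta_{\A}(N,n_N,\ell_N)\}$ as the bound on the residual-norm on the else-branch, and since the else-branch is taken precisely when $\mathcal{E}_N(n_N,\ell_N)<\eta_{\A}(N,n_N,\ell_N)$, while Assumption~\ref{as:graphapp2} makes $\eta_{\A}(N,n_N,\ell_N)=\eta_{\A,n_N}(\D\Uv_N^{n_N,\ell_N})\to 0$ uniformly in the argument, one again concludes that all residuals tend to zero. Either way, passing to the limit in \eqref{eq:rpde} (using weak convergence of $\Sv^{n_N}(\D\Uv_N^{n_N,\ell_N})$, weak convergence of $P_N^{n_N,\ell_N}$, and strong convergence in the convective term) gives $\R(\uv_\infty,p_\infty,\Sv_\infty)=0$.

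It then remains to identify the limiting stress, i.e.\ to prove $\mathcal{E}_\A(\D\uv_\infty,\Sv_\infty)=0$, after which Lemma~\ref{lem:solution} finishes the proof. This is the Minty-type step and I expect it to be the main obstacle. The idea is: from the energy identity (testing the discrete PDE residual with $\Uv_N^{n_N,\ell_N}$ itself, using skew-symmetry \eqref{eq:skewsym}, and controlling the $\F$-terms by the stopping criterion) one obtains $\limsup_N \int_\Omega \Sv^{n_N}(\D\Uv_N^{n_N,\ell_N}):\D\Uv_N^{n_N,\ell_N}\dx \le \int_\Omega \fv\cdot\uv_\infty\dx = \int_\Omega \Sv_\infty:\D\uv_\infty\dx$, the last equality by the already-established $\R^{pde}(\uv_\infty,p_\infty,\Sv_\infty)=0$ tested with $\uv_\infty$ and $\R^{ic}(\uv_\infty)=0$. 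Combining this with the monotonicity of $\Sv^{n_N}$ (Assumption~\ref{as:graphapp}) against an arbitrary pair $(\ukappa,\udelta)$ close to the graph $\A$, the graph-approximation control $\eta_{\A,n_N}\to 0$ from Assumption~\ref{as:graphapp2}, and a Young-measure or a.e.-convergence argument on the triangulated subregion, one upgrades this to the pointwise inclusion $(\D\uv_\infty(\x),\Sv_\infty(\x))\in\A(\x)$ a.e., i.e.\ $\mathcal{E}_\A(\D\uv_\infty,\Sv_\infty)=0$. The delicate points are that $\Sv^{n_N}$ depends on the moving index $n_N$ (so one needs the uniform graph-approximation bound, which is exactly why Assumption~\ref{as:graphapp2} is strengthened relative to \cite{KreuzerSuli:2016}), and that the inexactness terms $\dprod{\F_{N,pde}^{n_N,\ell_N},\Uv_N^{n_N,\ell_N}}$ and $\dprod{\F_{N,ic}^{n_N,\ell_N},P_N^{n_N,\ell_N}}$ must be shown to vanish — the former via the stopping criterion together with the uniform bound on $\norm{\Uv_N^{n_N,\ell_N}}_{1,r}$, the latter being zero (or controlled via \eqref{eq:assufpAILFEM}) under Assumption~\ref{ass:bounded}. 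Since these modifications are exactly the ones flagged in the text as "minor adaptions" of \cite[Cor.~5.8]{KreuzerSuli:2016}, the remaining work is bookkeeping, carried out in Appendix~\ref{app:1}.
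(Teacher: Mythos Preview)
Your sketch is correct and tracks the paper's proof in Appendix~\ref{app:1} closely: Lemma~\ref{lem:uniformbound} together with the discrete inf-sup for compactness of the triple, Theorem~\ref{thm:upperbound} plus the while-loop stopping criterion to drive the residual to zero, and a Minty-type argument for the graph identification $\mathcal{E}_\A(\D\uv_\infty,\Sv_\infty)=0$, with the inexactness terms $\langle\F_{N,pde}^{n_N,\ell_N},\cdot\rangle$ and $\langle\F_{N,ic}^{n_N,\ell_N},\cdot\rangle$ controlled throughout by $\zeta(N)\to 0$. The one genuine organisational difference is in the step that forces both $\mathcal{E}_N(n_N,\ell_N)\to 0$ and $\eta_\A(N,n_N,\ell_N)\to 0$: you argue via the dichotomy ``$n_N$ stabilises'' versus ``$n_N\to\infty$'', whereas the paper (Step~4) runs two contradiction arguments, first ruling out $\eta_\A\ge\epsilon$ along any sub-subsequence (which would force $n_N$ constant, whence Lemma~\ref{lem:app} eventually triggers an else-branch), and then ruling out $\mathcal{E}_N\ge\epsilon$ (which, given $\eta_\A\to 0$, forces refine-branch always, so $n_N$ is constant and Lemma~\ref{lem:app} contradicts). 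Both routes rest on the same AFEM-type Lemma~\ref{lem:app}. The paper's organisation has the small advantage that it yields $\eta_\A\to 0$ along the \emph{entire} weakly convergent subsequence (needed for Step~3), whereas in your case~2 you only obtain $\mathcal{E}_N<\eta_\A\to 0$ along the else-branch indices, which need not lie inside the weakly convergent subsequence you extracted first; a small reordering (extract the weakly convergent subsequence \emph{from} the else-branch indices, or run the dichotomy before the compactness step) closes this. One further minor point: in your energy identity the term $\langle\F_{N,ic}^{n_N,\ell_N},P_N^{n_N,\ell_N}\rangle$ vanishes not via~\eqref{eq:assufpAILFEM} (which only gives boundedness) but because $\norm{\F_{N,ic}^{n_N,\ell_N}}_{N,-1,r}\le\zeta(N)\to 0$ and $\norm{P_N^{n_N,\ell_N}}_{r'}$ is uniformly bounded.
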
 

A sketch of the proof of Theorem~\ref{thm:main} shall be postponed to Appendix~\ref{app:1}.

\section{Application to Bingham fluids}

In this section we will apply AILFEM, cf.~Algorithm~\ref{alg:AILFEM}, to Bingham fluids\footnote{We note that this approach can equally be applied to certain Herschel-Bulkley fluids. However, for simplicity of the presentation, we will 
restrict the analysis to Bingham fluids.}. Those fluids have wide-ranging applications, for instance in gas and oil industry, cf.~\cite{Frigaard:17}, or in the food industry, cf.~\cite{Ortega:19}, and were introduced in the early 1920s by E.C.~Bingham, cf.~\cite{bingham:1922}. Bingham fluids behave like a rigid body as long as the shear stress is below a threshold value, the so-called yield stress $\sigma$, and like a Newtonian fluid for shear stress exceeding this value. In particular, one has that
\begin{align*}
|\Dv|=0 \quad \Leftrightarrow \quad &|\Sv| \leq \sigma, \\
|\Dv| > 0 \quad \Leftrightarrow  \quad &\Sv=\sigma\frac{\Dv}{|\Dv|}+2 \nu \Dv, 
\end{align*}  
where $\nu$ denotes the viscosity of the fluid. In turn, a measurable selection of the corresponding maximal monotone $2$-graph is given by
\begin{align*} 
\Sv^\star(\Dv)=\begin{cases} 0 & \text{if} \ |\Dv|=0, \\
\sigma \frac{\Dv}{|\Dv|}+2 \nu \Dv & \text{else}.
\end{cases}
\end{align*}
For the approximation of $\Sv^\star:\Omega \times \Rsym \to \Rsym$ by a sequence of strictly monotone mappings we will use the Bercovier--Engelman regularisation, cf.~\cite{Bercovier:80}: 
\begin{align} \label{eq:graphapprox}
\Sv^n(\Dv):=\left(\frac{\sigma}{\sqrt{|\Dv|^2+n^{-2}}}+2 \nu \right) \Dv= \left(\frac{\sigma}{|\Dv|_{n}}+2 \nu \right) \Dv, \qquad n=1,2,\dotsc,
\end{align}
where $|\ukappa|_n:=\sqrt{|\ukappa|^2+n^{-2}}$ for  $\ukappa \in \Rsym$. Indeed, this graph approximation satisfies Assumption~\ref{as:graphapp}, and, for $\eta_{\A,n}(\mathbf{D}):=\int_\Omega|\Sv^n(\mathbf{D})-\Sv^\star(\mathbf{D})|^{r'}\dx$, Assumption~\ref{as:graphapp2} is satisfied as well; cf.~\cite[\S 7]{KreuzerSuli:2016}. Then, the weak formulation of the regularised discrete Bingham fluid flow problem reads as follows: find $(\Uv_N^n, P_N^n) \in \V(\T_N) \times \Q_0(\T_N)$ such that
\begin{subequations} \label{eq:regdis}
\begin{align}
\int_\Omega \left(\frac{\sigma}{|\D \Uv_N^n|_n}+2 \nu \right)\D \Uv_N^n:\D \Vv \dx + \B[\Uv_N^n;\Uv_N^n,\Vv]-\int_\Omega P_N^n \dive \Vv\dx &= \int_\Omega \fv \cdot \Vv \dx  \label{eq:regdispde}\\
- \int_\Omega Q \dive \Uv_N^n \dx &= 0 \label{eq:regdisic}
\end{align}
\end{subequations} 
for all $(\Vv,Q) \in \V(\T_N) \times \Q(\T_N)$. Upon defining
\begin{align} \label{eq:andef}
a_n(\Uv;\Vv,\Wv):=\int_\Omega \left(\frac{\sigma}{|\D \Uv|_n}+2 \nu\right) \D \Vv : \D \Wv \dx, \qquad \Uv,\Vv,\Wv \in \V(\T_N),
\end{align}
and
\begin{align*}
b(Q,\Vv):= - \int Q \dive \Vv \dx, \qquad \Vv \in \V(\T_N), \ Q \in \Q(\T_N),
\end{align*}
the problem~\eqref{eq:regdis} can be stated equivalently as follows:  find $(\Uv_N^n, P_N^n) \in \V(\T_N) \times \Q_0(\T_N)$ such that
\begin{subequations} \label{eq:weakbingham}
\begin{align} 
a_n(\Uv_N^n;\Uv_N^n,\Vv)+\B[\Uv_N^n;\Uv_N^n,\Vv]+b(P_N^{n},\Vv)&=\int_\Omega \fv \cdot \Vv \dx \label{eq:weakbinghampde}\\
b(Q,\Uv_N^n)&=0 \label{eq:weakbinghamic}
\end{align}
\end{subequations}
for all $(\Vv,Q) \in \V(\T_N) \times \Q(\T_N)$. 

In the following, we will examine the regularised Bingham problem~\eqref{eq:weakbingham}, and thereby introduce an iterative solver; initially, we will restrict ourselves to the case of a slowly flowing fluid, i.e., we will neglect the convection term, and subsequently analyse the general case involving the convection term.

\subsection{Ka\v{c}anov iteration for slowly flowing Bingham fluids}

As mentioned before, to begin with we will neglect the convection term. Consequently, the problem amounts to finding $(\Uv_N^n,P_N^n) \in \V(\T_N) \times \Q_0(\T_N)$ such that
\begin{align} \label{eq:weakbinghamconfree}
a_n(\Uv_N^n;\Uv_N^n,\Vv)+b(P_N^{n},\Vv)+b(Q,\Uv_N^n)=\int_\Omega \fv \cdot \Vv \dx
\end{align}
for all $(\Vv,Q) \in \V(\T_N) \times \Q(\T_N)$. We will now state some properties, partially borrowed from~\cite{Aposporidis:2011}, of the form $a_n:\V(\T_N) \times \V(\T_N) \times \V(\T_N) \to \mathbb{R}$ defined in~\eqref{eq:andef}, which will be important in the analysis later on. But first, let us recall Korn's inequality:
\begin{align} \label{eq:korn}
\norm{\D \uv}_2 \leq \norm{\uv}_{1,2} \leq \sqrt{2} \norm{\D \uv}_{2} \qquad \text{for all} \ \uv \in W_0^{1,2}(\Omega)^d.
\end{align}

\begin{lemma} \label{lem:anproperties}
\begin{enumerate}[(a)]
\item For $\mu_n(t):=\frac{\sigma}{\sqrt{t+n^{-2}}}+2 \nu$, $t \geq 0$, we have that
\begin{align} \label{eq:mukey}
2 \nu (t-s) \leq \mu_n(t^2)t-\mu_n(s^2)s \leq (\sigma n+2 \nu)(t-s) \qquad \text{for all} \ t \geq s \geq 0.
\end{align}
\item For any $\Uv \in \V(\T_N)$, the bilinear form $a_n(\Uv;\cdot,\cdot)$ is uniformly coercive with
\begin{align*} 
a_n(\Uv;\Vv,\Vv) \geq \nu \norm{\Vv}_{1,2}^2 \qquad \text{for all } \Vv \in \V(\T_N),
\end{align*}
and bounded with
\begin{align} \label{eq:anbounded}
a_n(\Uv;\Vv,\Wv) \leq (\sigma n +2 \nu) \norm{\Vv}_{1,2} \norm{\Wv}_{1,2} \qquad \text{for all} \ \Vv,\Wv \in \V(\T_N).
\end{align}
\item The strong monotonicity property 
\begin{align} \label{eq:anstrongmon}
a_n(\Uv;\Uv,\Uv-\Vv)-a_n(\Vv;\Vv,\Uv-\Vv) \geq \nu \norm{\Uv-\Vv}_{1,2}^2, 
\end{align}
as well as the Lipschitz continuity
\begin{align} \label{eq:anlipschitz}
a_n(\Uv;\Uv,\Wv)-a_n(\Vv;\Vv,\Wv) \leq (\sqrt{3} \sigma n+2 \nu) \norm{\Uv-\Vv}_{1,2} \norm{\Wv}_{1,2}
\end{align}
hold for all $\Uv,\Vv, \Wv \in \V(\T_N)$.
\end{enumerate}
\end{lemma}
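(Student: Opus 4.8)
\emph{Proof plan.} The three parts are essentially independent, and in each case the statement reduces to an elementary pointwise inequality followed by an integration over $\Omega$ and an application of Korn's inequality~\eqref{eq:korn}.

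\emph{Part (a).} I would introduce the scalar function $g(t):=\mu_n(t^2)\,t=\tfrac{\sigma t}{\sqrt{t^2+n^{-2}}}+2\nu t$, which is of class $C^1$ on $[0,\infty)$ since $t^2+n^{-2}\ge n^{-2}>0$. A direct computation gives $g'(t)=\tfrac{\sigma n^{-2}}{(t^2+n^{-2})^{3/2}}+2\nu$, whence $2\nu\le g'(t)\le\sigma n+2\nu$ for all $t\ge 0$ (the upper bound being attained at $t=0$). Writing $g(t)-g(s)=\int_s^t g'(\tau)\,\mathrm{d}\tau$ (or invoking the mean value theorem) then yields~\eqref{eq:mukey} at once.

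\emph{Part (b).} The scalar weight $w_\Uv(\x):=\tfrac{\sigma}{|\D\Uv(\x)|_n}+2\nu$ obeys the pointwise bounds $2\nu\le w_\Uv(\x)\le\sigma n+2\nu$ a.e.\ in $\Omega$; the lower bound is immediate and the upper one follows from $|\D\Uv|_n=\sqrt{|\D\Uv|^2+n^{-2}}\ge n^{-1}$. Coercivity is then obtained from $a_n(\Uv;\Vv,\Vv)=\int_\Omega w_\Uv|\D\Vv|^2\dx\ge 2\nu\norm{\D\Vv}_2^2\ge\nu\norm{\Vv}_{1,2}^2$, the last step using $\norm{\Vv}_{1,2}\le\sqrt2\norm{\D\Vv}_2$ from~\eqref{eq:korn}; boundedness~\eqref{eq:anbounded} follows from the Cauchy--Schwarz inequality, $a_n(\Uv;\Vv,\Wv)\le(\sigma n+2\nu)\int_\Omega|\D\Vv|\,|\D\Wv|\dx\le(\sigma n+2\nu)\norm{\D\Vv}_2\norm{\D\Wv}_2$, together with $\norm{\D\Vv}_2\le\norm{\Vv}_{1,2}$ (again~\eqref{eq:korn}).

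\emph{Part (c).} The device is to rewrite the form through the nonlinear tensor field $\mathbf A_n(\ukappa):=\mu_n(|\ukappa|^2)\ukappa=\tfrac{\sigma\ukappa}{|\ukappa|_n}+2\nu\ukappa$, $\ukappa\in\Rsym$, so that $a_n(\Uv;\Uv,\Wv)=\int_\Omega\mathbf A_n(\D\Uv):\D\Wv\dx$; both inequalities then reduce to pointwise estimates on $\mathbf A_n$. For~\eqref{eq:anstrongmon} I would split $(\mathbf A_n(\ukappa)-\mathbf A_n(\udelta)):(\ukappa-\udelta)=2\nu|\ukappa-\udelta|^2+\sigma\bigl(\tfrac{\ukappa}{|\ukappa|_n}-\tfrac{\udelta}{|\udelta|_n}\bigr):(\ukappa-\udelta)$ and note that the last term is non-negative: the map $\ukappa\mapsto|\ukappa|_n$ is convex, being the restriction to $\Rsym\times\{n^{-1}\}$ of the Euclidean norm on $\Rsym\times\mathbb R$, so its gradient $\ukappa\mapsto\ukappa/|\ukappa|_n$ is monotone. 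Hence $(\mathbf A_n(\ukappa)-\mathbf A_n(\udelta)):(\ukappa-\udelta)\ge 2\nu|\ukappa-\udelta|^2$ pointwise; taking $\ukappa=\D\Uv$, $\udelta=\D\Vv$, integrating, and using~\eqref{eq:korn} gives~\eqref{eq:anstrongmon}. For~\eqref{eq:anlipschitz} I would establish a pointwise Lipschitz bound $|\mathbf A_n(\ukappa)-\mathbf A_n(\udelta)|\le(\sqrt3\,\sigma n+2\nu)|\ukappa-\udelta|$ — either by bounding the symmetric Jacobian $D\mathbf A_n(\ukappa)[\mathbf H]=\mu_n(|\ukappa|^2)\mathbf H+2\mu_n'(|\ukappa|^2)(\ukappa:\mathbf H)\ukappa$ uniformly in $\ukappa$ and integrating along the segment from $\udelta$ to $\ukappa$, or directly from the splitting $\mathbf A_n(\ukappa)-\mathbf A_n(\udelta)=2\nu(\ukappa-\udelta)+\sigma\bigl(\tfrac{\ukappa}{|\ukappa|_n}-\tfrac{\udelta}{|\udelta|_n}\bigr)$ and an elementary estimate of the normalised difference. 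Then $a_n(\Uv;\Uv,\Wv)-a_n(\Vv;\Vv,\Wv)=\int_\Omega(\mathbf A_n(\D\Uv)-\mathbf A_n(\D\Vv)):\D\Wv\dx$, Cauchy--Schwarz, and~\eqref{eq:korn} give~\eqref{eq:anlipschitz}.

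\emph{Main obstacle.} The only non-mechanical points sit in part (c): the monotonicity of the normalised field $\ukappa\mapsto\ukappa/|\ukappa|_n$ (handled cleanly by the convexity remark above) and, above all, the pointwise Lipschitz bound, where one must keep track of how the constant degenerates like $n$ as $n\to\infty$ — as it inevitably must, the limiting map $\ukappa\mapsto\sigma\ukappa/|\ukappa|$ being discontinuous at the origin. Everything else is scalar calculus plus Korn's inequality; in particular the constant $\sqrt3\,\sigma n+2\nu$ is not claimed to be sharp, it is merely what the chosen elementary estimate delivers.
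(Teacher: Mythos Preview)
Your proposal is correct. Parts (a) and (b) coincide with the paper almost verbatim: the paper too sets $\xi_n(t)=\mu_n(t^2)t$, computes $\xi_n'$, and invokes the mean value theorem for (a), and waves at Korn's inequality for (b). In part (c) you diverge slightly. For the strong monotonicity~\eqref{eq:anstrongmon}, the paper argues by first establishing the pointwise algebraic inequality
\[
|\D\Uv-\D\Vv|^2 \ge \frac{|\D\Uv|_n-|\D\Vv|_n}{|\D\Vv|_n}\,\D\Vv:\D(\Uv-\Vv)
\]
and then rewriting the $\sigma$-contribution so that this inequality makes it manifestly non-negative; your route via the convexity of $\ukappa\mapsto|\ukappa|_n$ (hence monotonicity of its gradient $\ukappa/|\ukappa|_n$) is cleaner and more conceptual, and lands on the same bound $2\nu|\ukappa-\udelta|^2$. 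For the Lipschitz estimate~\eqref{eq:anlipschitz}, the paper does not give a self-contained argument but refers to \cite[Prop.~2.2(b)]{HeidSuli:21}, deducing it from~\eqref{eq:mukey}; your Jacobian approach is self-contained and in fact sharper --- the symmetric operator $D\mathbf A_n(\ukappa)$ has eigenvalues $\mu_n(|\ukappa|^2)$ (orthogonal to $\ukappa$) and $\mu_n(|\ukappa|^2)+2|\ukappa|^2\mu_n'(|\ukappa|^2)$ (parallel to $\ukappa$), both lying in $[2\nu,\sigma n+2\nu]$ by part~(a), so integrating along the segment already yields the Lipschitz constant $\sigma n+2\nu$, which of course implies the stated (weaker) bound with $\sqrt{3}\,\sigma n+2\nu$.
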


\begin{proof}
In order to prove (a), let us define $\xi_n(t):=\mu_n(t^2)t$. Then, by the mean-value theorem we have that
\begin{align*}
\inf_{\tau \geq 0} \xi'_n(\tau)(t-s) \leq \mu_n(t^2)t-\mu_n(s^2)s \leq \sup_{\tau \geq 0} \xi_n'(\tau)(t-s) \qquad \text{for all} \ t \geq s \geq 0.
\end{align*}
Moreover, a straightforward calculation reveals that
\begin{align*}
\xi_n'(\tau)=2\nu+\frac{\sigma n^{-2}}{(\tau^2+n^{-2})^{\nicefrac{3}{2}}},
\end{align*}
and, in turn,
\begin{align*}
2 \nu \leq \xi_n'(\tau) \leq 2 \nu + \sigma n \qquad \text{for all} \ \tau \geq 0;
\end{align*}
this yields (a). The assertion (b) can be shown in a straightforward manner by invoking Korn's inequality~\eqref{eq:korn}. In order to verify (c), we first note that it can be shown that
\begin{align*}
|\D \Uv -\D \Vv|^2 \geq \frac{|\D \Uv|_n-|\D \Vv|_n}{|\D \Vv|_n} \D \Vv:\D(\Uv-\Vv).
\end{align*}
Consequently,
\begin{multline*}
a_n(\Uv;\Uv,\Uv-\Vv)-a_n(\Vv;\Vv,\Uv-\Vv) \\
\begin{aligned}
&=\int_\Omega 2 \nu |\D \Uv- \D \Vv|^2+ \frac{\sigma}{|\D \Uv|_n}\left(|\D \Uv-\D \Vv|^2-\frac{|\D \Uv|_n-|\D \Vv|_n}{|\D \Vv|_n} \D \Vv:\D(\Uv-\Vv)\right) \dx \\
&\geq \int_\Omega 2 \nu | \D \Uv-\D \Vv|^2 \dx \\
&\geq \nu \norm{\Uv-\Vv}_{1,2}^2.
\end{aligned}
\end{multline*}
Finally, the asserted Lipschitz continuity can be established as in the proof of~\cite[Prop.~2.2(b)]{HeidSuli:21} thanks to~\eqref{eq:mukey}.
\end{proof}

With the properties stated in Lemma~\ref{lem:anproperties} it can be shown that the weak convection-free problem~\eqref{eq:weakbinghamconfree} has a unique solution.

\begin{proposition}[{\hspace{1sp}\cite[Prop.~1]{Aposporidis:2011}}]
The problem~\eqref{eq:weakbinghamconfree} has a unique solution~$(\Uv_N^n,P_N^n) \in \V(\T_N) \times \Q_0(\T_N)$ with $\norm{\Uv_N^{n}}_{1,2} \leq \nu^{-1} C_P \norm{\fv}_{2}$, where $C_P$ is the constant from the Poincar\'{e}--Friedrichs inequality.
\end{proposition}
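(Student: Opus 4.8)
The plan is to cast the convection-free problem~\eqref{eq:weakbinghamconfree} as an abstract saddle-point problem and invoke the standard theory for monotone saddle-point problems (Brezzi-type theory in the nonlinear setting). Concretely, one works on the product space $\V(\T_N) \times \Q_0(\T_N)$ and considers the nonlinear operator $A_n:\V(\T_N) \to \V(\T_N)^\star$ defined by $\dprod{A_n \Uv, \Vv} := a_n(\Uv;\Uv,\Vv)$, together with the bilinear form $b(\cdot,\cdot)$. By Lemma~\ref{lem:anproperties}(c), the operator $A_n$ is strongly monotone with modulus $\nu$ and Lipschitz continuous with constant $\sqrt{3}\sigma n + 2\nu$; hence by the Browder--Minty theorem $A_n$ is a bijection from $\V(\T_N)$ onto $\V(\T_N)^\star$, and in particular invertible on the closed subspace $\V_0(\T_N)$ of discretely divergence-free functions.

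The key steps, in order, are: \textbf{(i)} Restrict to $\V_0(\T_N)$: seek $\Uv_N^n \in \V_0(\T_N)$ with $a_n(\Uv_N^n;\Uv_N^n,\Vv)=\int_\Omega \fv\cdot\Vv\dx$ for all $\Vv \in \V_0(\T_N)$. Since $b(Q,\cdot)$ vanishes on $\V_0(\T_N)$ and $\fv \mapsto \int_\Omega \fv\cdot(\cdot)\dx$ is a bounded linear functional on $\V(\T_N)$, the strong monotonicity and Lipschitz continuity from Lemma~\ref{lem:anproperties}(c) give, via Browder--Minty, a unique solution $\Uv_N^n \in \V_0(\T_N)$. \textbf{(ii)} Uniqueness of the velocity in all of $\V(\T_N)$: any solution pair $(\Uv_N^n,P_N^n)$ of~\eqref{eq:weakbinghamconfree} has $\Uv_N^n$ discretely divergence-free by~\eqref{eq:weakbinghamic} (taking $Q$ arbitrary), so it must coincide with the solution from step (i) by the uniqueness there. \textbf{(iii)} Existence and uniqueness of the pressure: the residual $\Vv \mapsto \int_\Omega\fv\cdot\Vv\dx - a_n(\Uv_N^n;\Uv_N^n,\Vv)$ vanishes on $\V_0(\T_N)$, so by the discrete inf-sup condition~\eqref{eq:discreteinfsup} (with $s=s'=2$) there is a unique $P_N^n \in \Q_0(\T_N)$ with $b(P_N^n,\Vv)$ equal to this residual for all $\Vv \in \V(\T_N)$; uniqueness follows again from~\eqref{eq:discreteinfsup}. \textbf{(iv)} The a priori bound: test~\eqref{eq:weakbinghamconfree} with $(\Vv,Q)=(\Uv_N^n,0)$, use coercivity $a_n(\Uv_N^n;\Uv_N^n,\Uv_N^n)\geq \nu\norm{\Uv_N^n}_{1,2}^2$ from Lemma~\ref{lem:anproperties}(b), bound the right-hand side by $\norm{\fv}_2 C_P \norm{\D\Uv_N^n}_2 \leq \nu^{-1}$-type manipulation using the Poincar\'e--Friedrichs inequality $\norm{\Vv}_{1,2}\leq C_P\norm{\fv}_2$-pairing, and divide, yielding $\norm{\Uv_N^n}_{1,2}\leq \nu^{-1}C_P\norm{\fv}_2$.

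The only genuinely delicate point is the inf-sup step~(iii): one must check that $\V_0(\T_N)$ is precisely the kernel of the discrete divergence $b(\cdot,\cdot)$ acting on $\Q_0(\T_N)$, i.e.\ that $\Q_0(\T_N)$ separates the non-divergence-free part, which is exactly what~\eqref{eq:discreteinfsup} encodes, and that the residual functional indeed annihilates this kernel — this is immediate from step~(i). The monotonicity argument in step~(i) is standard once Lemma~\ref{lem:anproperties} is in hand, so the result follows; a full proof can also be found in~\cite[Prop.~1]{Aposporidis:2011}, and we only sketch it here for completeness.
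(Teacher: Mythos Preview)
Your proposal is correct. Note that the paper does not actually supply a proof of this proposition: it merely states the result as a citation of \cite[Prop.~1]{Aposporidis:2011}, prefaced by the remark that ``with the properties stated in Lemma~\ref{lem:anproperties} it can be shown that the weak convection-free problem~\eqref{eq:weakbinghamconfree} has a unique solution.'' Your sketch is precisely the standard way to turn those properties into a proof --- restrict to $\V_0(\T_N)$, apply Browder--Minty using the strong monotonicity and Lipschitz continuity from Lemma~\ref{lem:anproperties}(c), recover the pressure via the discrete inf-sup condition~\eqref{eq:discreteinfsup}, and obtain the a~priori bound by testing with $(\Uv_N^n,0)$, using coercivity from Lemma~\ref{lem:anproperties}(b) together with Poincar\'e--Friedrichs. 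Two cosmetic points: in step~(ii) the divergence constraint comes from testing~\eqref{eq:weakbinghamconfree} with $\Vv=0$ rather than from~\eqref{eq:weakbinghamic} (which belongs to the problem with convection), and the phrasing in step~(iv) is slightly garbled, but the argument is sound.
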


In order to approximate the unique solution $(\Uv_N^n,P_N^n) \in \V(\T_N) \times \Q_0(\T_N)$ of~\eqref{eq:weakbinghamconfree}, we will employ the Ka\v{c}anov iteration, see, e.g.,\cite{GarauMorinZuppa:11, HaJeSh:97,Zeidler:90} and Ka\v{c}anov's original work~\cite{kacanov:1959}, which is defined as follows: given $\Uv_N^{n,\ell} \in \V(\T_N)$ find $(\Uv_N^{n,\ell+1},P_N^{n,\ell+1}) \in \V(\T_N) \times \Q_0(\T_N)$ such that 
\begin{align} \label{eq:picardconfree}
\ a_n(\Uv_N^{n,\ell};\Uv_N^{n,\ell+1},\Vv)+b(P_N^{n,\ell+1},\Vv)+b(Q,\Uv_N^{n,\ell+1})=\int_\Omega \fv \cdot \Vv \dx
\end{align} 
for all $(\Vv,Q) \in \V(\T_N) \times \Q(\T_N)$, where $\Uv_N^{n,0} \in \V(\T_N)$ is an inital guess. Thanks to Lemma~\ref{lem:anproperties}, the Ka\v{c}anov iteration~\eqref{eq:picardconfree} is well-defined, see, e.g.,~\cite[\S 3]{Aposporidis:2011}. Moreover, Assumption~\ref{ass:bounded} is satisfied since the velocity vectors $\Uv_N^{n,\ell}$ are discretely divergence-free; cf.~Remark~\ref{rem:graphas}. It remains to show that the sequence generated by the Ka\v{c}anov iteration converges to the unique solution established in the proposition above, which will be done based on the works~\cite{HeidWihler:19v2, HeidWihler2:19v1, HeidPraetoriusWihler:21, HeidSuli:21}. For that purpose we define the functional $\H_n:\V(\T_N) \to \mathbb{R}$ by
\begin{align} \label{eq:hn1}
\H_n(\Uv):=\int_\Omega \varphi_n(|\D \Uv|^2)dx-\int_\Omega \fv \cdot \Uv \dx,
\end{align} 
where 
\begin{align*}
\varphi_n(s):=\frac{1}{2} \int_0^s \mu_n(t) \, \mathrm{d} t \quad \text{with} \quad \mu_n(t)=\frac{\sigma}{\sqrt{t+n^{-2}}}+2 \nu,
\end{align*}
for $s,t \geq 0$. In particular,
\begin{align*}
\H_n(\Uv)=\int_\Omega (\sigma \sqrt{|\D \Uv|^2+n^{-2}}+\nu |\D \Uv|^2) \dx-\int_\Omega \fv \cdot \Uv \dx.
\end{align*}
Moreover, we have that
\begin{align*}
\dprod{\H_n'(\Uv),\Vv}=a_n(\Uv;\Uv,\Vv)-\int_\Omega \fv \cdot \Vv \dx,
\end{align*}
where $\H_n'$ denotes the Gateaux derivative of $\H_n$. We will now state some known properties -- in a slightly different setting -- of the functional $\H_n$, cf.~\eqref{eq:hn1}.

\begin{lemma}
\begin{enumerate}[(A)]
\item For any $\Uv,\Vv \in \V(\T_N)$ we have that
\begin{align*}
\H_n(\Uv)-\H_n(\Vv) \geq \frac{1}{2}a_n(\Uv;\Uv,\Uv)-\frac{1}{2}a_n(\Uv;\Vv,\Vv)-\int_\Omega \fv \cdot \Uv \dx +\int_\Omega \fv \cdot \Vv \dx.
\end{align*}
\item Let $\Uv_N^n \in \V_0(\T_N)$ denote the velocity vector of the unique solution of~\eqref{eq:weakbinghamconfree}. Then, $\Uv_N^n$ is the global minimiser of $\H_n$ in $\V_0(\T_N)$, and we have that
\begin{align} \label{eq:equiv}
\frac{\nu}{2}\norm{\Uv-\Uv_N^n}_{1,2}^2 \leq \H_n(\Uv)-\H_n(\Uv_N^n) \leq \frac{\sqrt{3}\sigma n+2 \nu}{2}\norm{\Uv-\Uv_N^n}_{1,2}^2
\end{align}
for all $\Uv \in \V_0(\T_N)$.
\item Let $\{\uvnl\}_\ell \subset \V_0(\T_N)$ be the sequence generated by~\eqref{eq:picardconfree}\footnote{Without loss of generality we assume here and in the following that $\Uv_N^{n,0} \in \V_0(\T_N)$.}. Then, the following monotonicity property holds:
\begin{align} \label{eq:hnmono}
\H_n(\uvnl)-\H_n(\uvnlo) \geq \frac{\nu}{2} \norm{\uvnlo-\uvnl}_{1,2}^2 \qquad \text{for all} \ \ell=0,1,2,\dotsc.
\end{align}
\end{enumerate}
\end{lemma}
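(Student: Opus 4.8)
The plan is to prove the three parts (A), (B), (C) of the Lemma by exploiting convexity of $\varphi_n$ and the algebraic inequality~\eqref{eq:mukey} from Lemma~\ref{lem:anproperties}(a), together with the coercivity and strong monotonicity properties established there. Throughout, I will use the identification $\dprod{\H_n'(\Uv),\Vv}=a_n(\Uv;\Uv,\Vv)-\int_\Omega \fv\cdot\Vv\dx$, and the fact that $\varphi_n'(s)=\tfrac12\mu_n(s)$, so that $\tfrac{\mathsf{d}}{\mathsf{d}t}\varphi_n(|\D(\Uv+t\Vv)|^2)\big|_{t=0}=\mu_n(|\D\Uv|^2)\D\Uv:\D\Vv$.

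For part (A): the function $s\mapsto\varphi_n(s)$ is convex on $[0,\infty)$ because $\varphi_n'=\tfrac12\mu_n$ is non-increasing? No — $\mu_n$ is decreasing, so $\varphi_n$ is actually concave. The right inequality is therefore the statement that $\Vv\mapsto\int_\Omega\varphi_n(|\D\Vv|^2)\dx$ lies below its tangent: more precisely I would write $\varphi_n(|\D\Uv|^2)-\varphi_n(|\D\Vv|^2)\ge \varphi_n'(|\D\Uv|^2)\big(|\D\Uv|^2-|\D\Vv|^2\big)$ by concavity (tangent line at $|\D\Uv|^2$ lies above the graph), i.e. $\ge\tfrac12\mu_n(|\D\Uv|^2)(|\D\Uv|^2-|\D\Vv|^2)=\tfrac12(\mu_n(|\D\Uv|^2)|\D\Uv|^2-\mu_n(|\D\Uv|^2)|\D\Vv|^2)$. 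Integrating over $\Omega$ and subtracting the linear term $\int_\Omega\fv\cdot(\Uv-\Vv)\dx$ gives exactly (A), since $\int_\Omega\mu_n(|\D\Uv|^2)|\D\Uv|^2\dx=a_n(\Uv;\Uv,\Uv)$ and $\int_\Omega\mu_n(|\D\Uv|^2)|\D\Vv|^2\dx=a_n(\Uv;\Vv,\Vv)$. This is the step I expect to be cleanest, modulo getting the direction of concavity right.

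For part (B): that $\Uv_N^n$ minimises $\H_n$ over $\V_0(\T_N)$ follows because $\H_n'(\Uv_N^n)$ vanishes on $\V_0(\T_N)$ (this is precisely~\eqref{eq:weakbinghamconfree} restricted to discretely divergence-free test functions, where the $b$-terms drop out) combined with convexity of $\H_n$ restricted to $\V_0(\T_N)$ — note that although $\varphi_n(|\cdot|^2)$ is concave in $|\cdot|^2$, the composite map $\Vv\mapsto\varphi_n(|\D\Vv|^2)=\sigma\sqrt{|\D\Vv|^2+n^{-2}}+\nu|\D\Vv|^2$ is convex in $\Vv$ (sum of a norm-type term and a quadratic), so $\H_n$ is convex and the critical point is the global minimiser. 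For the two-sided bound~\eqref{eq:equiv}, I would apply (A) with the roles of $\Uv,\Vv$ chosen as $\Uv$ and $\Uv_N^n$ and use the strong monotonicity~\eqref{eq:anstrongmon}: writing $\H_n(\Uv)-\H_n(\Uv_N^n)$ via the fundamental theorem of calculus along the segment $t\mapsto\Uv_N^n+t(\Uv-\Uv_N^n)$ as $\int_0^1\dprod{\H_n'(\Uv_N^n+t(\Uv-\Uv_N^n)),\Uv-\Uv_N^n}\dx t$, then subtracting $\dprod{\H_n'(\Uv_N^n),\Uv-\Uv_N^n}=0$ and invoking the lower bound $\nu\|\cdot\|_{1,2}^2$ from~\eqref{eq:anstrongmon} (rescaled by $t$) for the lower estimate and the Lipschitz bound~\eqref{eq:anlipschitz} for the upper estimate; the integration in $t$ produces the factor $\tfrac12$. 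The constant $\sqrt3\sigma n+2\nu$ matches the Lipschitz constant in~\eqref{eq:anlipschitz} exactly, which is a good consistency check.

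For part (C): this is the standard Ka\v{c}anov energy-descent estimate. Using (A) with $\Uv=\uvnl$, $\Vv=\uvnlo$ gives $\H_n(\uvnl)-\H_n(\uvnlo)\ge\tfrac12 a_n(\uvnl;\uvnl,\uvnl)-\tfrac12 a_n(\uvnl;\uvnlo,\uvnlo)-\int_\Omega\fv\cdot(\uvnl-\uvnlo)\dx$. Now I use the defining relation~\eqref{eq:picardconfree} of the iteration: testing it with $\Vv=\uvnl-\uvnlo\in\V_0(\T_N)$ (the $b$-terms vanish because both iterates are discretely divergence-free) yields $a_n(\uvnl;\uvnlo,\uvnl-\uvnlo)=\int_\Omega\fv\cdot(\uvnl-\uvnlo)\dx$. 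Substituting, the right-hand side becomes $\tfrac12 a_n(\uvnl;\uvnl,\uvnl)-\tfrac12 a_n(\uvnl;\uvnlo,\uvnlo)-a_n(\uvnl;\uvnlo,\uvnl-\uvnlo)=\tfrac12 a_n(\uvnl;\uvnl-\uvnlo,\uvnl-\uvnlo)$, where I expand the bilinear form $a_n(\uvnl;\cdot,\cdot)$ and collect terms (using its symmetry in the last two slots). Finally, the uniform coercivity in Lemma~\ref{lem:anproperties}(b), $a_n(\uvnl;\uvnl-\uvnlo,\uvnl-\uvnlo)\ge\nu\|\uvnl-\uvnlo\|_{1,2}^2$, gives~\eqref{eq:hnmono}. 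The main obstacle — if any — is bookkeeping: making sure the $b$-bilinear-form terms genuinely cancel (which they do precisely because we may assume $\Uv_N^{n,0}\in\V_0(\T_N)$, so all iterates stay in $\V_0(\T_N)$, as noted in the footnote), and keeping the algebra straight when expanding $\tfrac12 a_n(\uvnl;\uvnl,\uvnl)-\tfrac12 a_n(\uvnl;\uvnlo,\uvnlo)-a_n(\uvnl;\uvnlo,\uvnl-\uvnlo)$ into a perfect square against the fixed first slot.
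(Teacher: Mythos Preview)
Your argument is correct in all three parts. The paper itself does not spell out the proof but only cites \cite{HeidSuli:21}, \cite{HeidWihler2:19v1}, and \cite{HeidWihler:19v2} for (A), (B), and (C) respectively; what you have written is precisely the standard argument carried out in those references --- concavity of $\varphi_n$ for (A), the integral representation along the segment together with the strong monotonicity~\eqref{eq:anstrongmon} and Lipschitz bound~\eqref{eq:anlipschitz} for (B), and the combination of (A) with the defining iteration~\eqref{eq:picardconfree} and coercivity for (C).
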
  

For the proof of (A) we refer to~\cite[Lem.~3.1]{HeidSuli:21}. The assertion (B) can be shown in the same way as~\cite[Lem.~2]{HeidWihler2:19v1}, using the properties from Lemma~\ref{lem:anproperties} and noting that we only consider discretely divergence-free velocity vectors. Finally, the property~\eqref{eq:hnmono} follows as in the proof of~\cite[Thm.~2.5]{HeidWihler:19v2}, since the velocities involved are again discretely divergence-free. 

Now we are in a position to prove the convergence of the sequence of velocity vectors~$\{\Uv_N^{n,\ell}\}_{\ell}$ generated by~\eqref{eq:picardconfree} to $\uvn \in \V(\T_N)$. The following result, as well as its proof, are borrowed from~\cite[Thm.~1]{HeidPraetoriusWihler:21}; however, as the setting in this work is slightly different, we recall the proof of the statement. 

\begin{theorem} \label{thm:hncontraction}
Let $\Uv_N^n \in \V_0(\T_N)$ be the velocity vector of the  unique solution of~\eqref{eq:weakbinghamconfree} and let $\{\Uv_N^{n,\ell}\}_{\ell} \subset \V_0(\T_N)$ be the sequence generated by the iteration scheme~\eqref{eq:picardconfree}. Then, the following a posteriori error estimate holds:
\begin{align} \label{eq:aposterioriconfree}
\norm{\uvnl-\uvn}_{1,2} \leq \frac{\sigma n +2 \nu}{\nu} \norm{\uvnlo-\uvnl}_{1,2}, \qquad \ell=0,1,2,\dotsc.
\end{align}
Moreover, $\H_n$ contracts along the sequence generated by \eqref{eq:picardconfree}:
\begin{align} \label{eq:hncontraction}
\H_n(\uvnlo)-\H_n(\uvn) \leq q_{\rm con} (\H_n(\uvnl)-\H_n(\uvn)), 
\end{align} 
where 
\begin{align*} 
q_{\rm con}=\left(1-\frac{\nu^3}{(\sqrt{3} \sigma n+2 \nu)(\sigma n +2 \nu)^2}\right) \in (0,1).
\end{align*}
\end{theorem}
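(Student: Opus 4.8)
The plan is to adapt the standard convergence argument for the Ka\v{c}anov/Zarantonello-type iteration to the present regularised Bingham setting, using only the properties assembled in Lemma~\ref{lem:anproperties} and the functional identities for $\H_n$. The two claims~\eqref{eq:aposterioriconfree} and~\eqref{eq:hncontraction} are largely independent in the derivation, so I would treat them in turn.

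First I would establish the a posteriori bound~\eqref{eq:aposterioriconfree}. The starting point is the strong monotonicity~\eqref{eq:anstrongmon} applied with $\Uv = \uvn$ and $\Vv = \uvnl$, which gives
\begin{align*}
\nu \norm{\uvn - \uvnl}_{1,2}^2 \leq a_n(\uvn;\uvn,\uvn-\uvnl) - a_n(\uvnl;\uvnl,\uvn-\uvnl).
\end{align*}
Since $\uvn-\uvnl \in \V_0(\T_N)$, the pressure terms $b(P_N^n,\cdot)$ and $b(P_N^{n,\ell+1},\cdot)$ vanish on it, so both $a_n(\uvn;\uvn,\uvn-\uvnl)$ and $a_n(\uvnl;\uvnlo,\uvn-\uvnl)$ equal $\int_\Omega \fv\cdot(\uvn-\uvnl)\dx$ by~\eqref{eq:weakbinghamconfree} and~\eqref{eq:picardconfree}. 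Subtracting, the right-hand side collapses to $a_n(\uvnl;\uvnl,\uvn-\uvnl) - a_n(\uvnl;\uvnlo,\uvn-\uvnl) = a_n(\uvnl;\uvnl-\uvnlo,\uvn-\uvnl)$, which is bounded by the boundedness estimate~\eqref{eq:anbounded} by $(\sigma n + 2\nu)\norm{\uvnlo-\uvnl}_{1,2}\norm{\uvn-\uvnl}_{1,2}$. Dividing through by $\nu\norm{\uvn-\uvnl}_{1,2}$ yields~\eqref{eq:aposterioriconfree} (the case $\uvnl = \uvn$ being trivial).

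Next I would prove the contraction~\eqref{eq:hncontraction}. Combining the one-step energy decrease~\eqref{eq:hnmono} with the a posteriori estimate~\eqref{eq:aposterioriconfree} gives
\begin{align*}
\H_n(\uvnl) - \H_n(\uvnlo) \geq \frac{\nu}{2}\norm{\uvnlo-\uvnl}_{1,2}^2 \geq \frac{\nu}{2}\left(\frac{\nu}{\sigma n + 2\nu}\right)^2 \norm{\uvnl-\uvn}_{1,2}^2,
\end{align*}
and then the upper bound in the energy-norm equivalence~\eqref{eq:equiv} converts $\norm{\uvnl-\uvn}_{1,2}^2$ into a lower bound on $\H_n(\uvnl)-\H_n(\uvn)$, namely $\norm{\uvnl-\uvn}_{1,2}^2 \geq \frac{2}{\sqrt{3}\sigma n + 2\nu}(\H_n(\uvnl)-\H_n(\uvn))$. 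Chaining these produces
\begin{align*}
\H_n(\uvnl) - \H_n(\uvnlo) \geq \frac{\nu^3}{(\sqrt{3}\sigma n + 2\nu)(\sigma n + 2\nu)^2}\left(\H_n(\uvnl) - \H_n(\uvn)\right).
\end{align*}
Writing $\H_n(\uvnl)-\H_n(\uvnlo) = \big(\H_n(\uvnl)-\H_n(\uvn)\big) - \big(\H_n(\uvnlo)-\H_n(\uvn)\big)$ and rearranging gives exactly~\eqref{eq:hncontraction} with the stated $q_{\rm con}$; that $q_{\rm con}\in(0,1)$ is immediate since $\nu^3 < (\sqrt3\sigma n + 2\nu)(\sigma n + 2\nu)^2$ (each factor strictly exceeds $2\nu \geq \nu$, and $\sqrt 3 > 1$, so the product exceeds $\nu^3$).

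The only delicate point is the handling of the pressure/divergence constraint: one must be careful that the iterates $\uvnl$ lie in $\V_0(\T_N)$ (guaranteed by the footnote's standing assumption $\Uv_N^{n,0}\in\V_0(\T_N)$ together with~\eqref{eq:picardconfree} tested with $Q\in\Q(\T_N)$) so that all the $b(\cdot,\cdot)$ terms drop out when testing against differences of divergence-free functions, reducing everything to the scalar coercivity/boundedness/monotonicity properties of $a_n$. Everything else is routine bookkeeping with the constants from Lemma~\ref{lem:anproperties}. I do not expect any genuine obstacle beyond keeping the chain of inequalities straight.
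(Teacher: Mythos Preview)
Your proposal is correct and follows essentially the same route as the paper's proof: strong monotonicity~\eqref{eq:anstrongmon} plus the discrete divergence-free structure to reduce to $a_n(\uvnl;\uvnl-\uvnlo,\uvn-\uvnl)$, then boundedness~\eqref{eq:anbounded} for~\eqref{eq:aposterioriconfree}, and the chain~\eqref{eq:hnmono}$\to$\eqref{eq:aposterioriconfree}$\to$\eqref{eq:equiv} for~\eqref{eq:hncontraction}. The only blemish is a sign slip when you write ``the right-hand side collapses to $a_n(\uvnl;\uvnl,\uvn-\uvnl) - a_n(\uvnl;\uvnlo,\uvn-\uvnl)$'' (it should be the negative), but since~\eqref{eq:anbounded} bounds the bilinear form in absolute value this is harmless.
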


\begin{proof}
We first address the a posteriori error estimate. The strong monotonicity~\eqref{eq:anstrongmon} yields that
\begin{align*}
\nu \norm{\Uv_N^{n,\ell}-\Uv_N^n}_{1,2}^2 \leq a_n(\Uv_N^{n,\ell};\Uv_N^{n,\ell},\Uv_N^{n,\ell}-\uvn)-a_n(\uvn;\uvn,\uvnl-\uvn).
\end{align*} 
Then, since $\uvn$ and $\uvnl$ are discretely divergence-free, i.e., $b(P_N^n,\uvnl-\uvn)=b(Q,\uvn)=0$ for all $Q \in \Q(\T_N)$,~\eqref{eq:weakbinghamconfree} implies that $a_n(\uvn;\uvn,\uvnl-\uvn)=\int_\Omega \fv \cdot (\uvnl-\uvn) \dx$, and, in turn,
\begin{align*} 
\nu \norm{\uvnl-\uvn}_{1,2}^2 \leq a_n(\uvnl;\uvnl,\uvnl-\uvn)-\int_\Omega \fv \cdot (\uvnl-\uvn) \dx.
\end{align*}
Now recalling the iteration scheme~\eqref{eq:picardconfree} and that $\uvnlo$ is discretely divergence-free, this further leads to
\begin{align*}
\nu \norm{\uvnl-\uvn}_{1,2}^2 \leq a_n(\uvnl;\uvnl-\uvnlo,\uvnl-\uvn).
\end{align*}
Finally, by invoking the bound~\eqref{eq:anbounded}, we get 
\begin{align*}
\norm{\uvnl-\uvn}_{1,2} \leq \frac{\sigma n +2 \nu}{\nu} \norm{\uvnlo-\uvnl}_{1,2}.
\end{align*}
Next, we will establish the contraction of $\H_n$ along $\{\uvnl\}_\ell$. Indeed, we have that
\begin{align*}
\H_n(\uvnlo)-\H_n(\uvn)&=\H_n(\uvnl)-\H_n(\uvn)-(\H_n(\uvnl)-\H_n(\uvnlo)) \\
&\stackrel{\eqref{eq:hnmono}}{\leq} \H_n(\uvnl)-\H_n(\uvn)-\frac{\nu}{2}\norm{\uvnlo-\uvnl}^2_{1,2} \\
&\stackrel{\eqref{eq:aposterioriconfree}}{\leq}\H_n(\uvnl)-\H_n(\uvn)-\frac{\nu^3}{2(\sigma n +2 \nu)^2}\norm{\uvnl-\uvn}^2_{1,2} \\
&\stackrel{\eqref{eq:equiv}}{\leq}\H_n(\uvnl)-\H_n(\uvn)-\frac{\nu^3}{(\sqrt{3} \sigma n+2 \nu)(\sigma n +2 \nu)^2}(\H_n(\uvnl)-\H_n(\uvn)),
\end{align*}
which shows~\eqref{eq:hncontraction}.
\end{proof}

\begin{remark}
We emphasize that related results can be found in~\cite{Diening:2020} and~\cite{HeidSuli:21}. 
\end{remark}


\begin{corollary} \label{cor:velocityconvergence}
As before, let $\uvn$ be the velocity vector of the unique solution of~\eqref{eq:weakbinghamconfree} and let $\{\uvnl\}_\ell$ be the sequence generated by~\eqref{eq:picardconfree}. Then the following inequality holds:
\begin{align} \label{eq:kacanovvelocityconvergence}
\norm{\uvn-\uvnl}_{1,2} \leq \sqrt{\frac{\sqrt{3} \sigma n + 2\nu}{\nu}} q_{\rm con}^{\ell/2} \norm{\uvn-\Uv_N^{n,0}}_{1,2}.
\end{align}
Thus, since $q_{\rm con} \in (0,1)$, we have that $\uvnl \to \uvn$ for $\ell \to \infty$.
\end{corollary}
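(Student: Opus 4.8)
The plan is to chain together the a posteriori estimate \eqref{eq:aposterioriconfree}, the contraction \eqref{eq:hncontraction}, and the norm–energy equivalence \eqref{eq:equiv} from the preceding lemma. First I would invoke the lower bound in \eqref{eq:equiv} at $\Uv=\uvnl$, giving
\begin{align*}
\frac{\nu}{2}\norm{\uvnl-\uvn}_{1,2}^2 \leq \H_n(\uvnl)-\H_n(\uvn).
\end{align*}
Next I would iterate the contraction \eqref{eq:hncontraction} $\ell$ times to obtain $\H_n(\uvnl)-\H_n(\uvn) \leq q_{\rm con}^{\ell}\,(\H_n(\Uv_N^{n,0})-\H_n(\uvn))$, and then bound the right-hand side from above using the \emph{upper} bound in \eqref{eq:equiv} at $\Uv=\Uv_N^{n,0}$, namely $\H_n(\Uv_N^{n,0})-\H_n(\uvn) \leq \frac{\sqrt{3}\sigma n + 2\nu}{2}\norm{\uvn-\Uv_N^{n,0}}_{1,2}^2$.

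Combining these three inequalities yields
\begin{align*}
\frac{\nu}{2}\norm{\uvnl-\uvn}_{1,2}^2 \leq q_{\rm con}^{\ell}\,\frac{\sqrt{3}\sigma n + 2\nu}{2}\norm{\uvn-\Uv_N^{n,0}}_{1,2}^2,
\end{align*}
and dividing by $\nu/2$ and taking square roots gives exactly \eqref{eq:kacanovvelocityconvergence}. The final sentence then follows immediately: since $q_{\rm con}\in(0,1)$ by Theorem~\ref{thm:hncontraction}, the factor $q_{\rm con}^{\ell/2} \to 0$ as $\ell\to\infty$, so $\norm{\uvn-\uvnl}_{1,2}\to 0$.

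I do not anticipate any real obstacle here; the corollary is a routine consequence of results already established. The only points requiring a modicum of care are: (i) checking that \eqref{eq:equiv} is being applied in the correct direction (lower bound on the $\ell$-th iterate, upper bound on the initial guess) so that the constants combine to give the stated prefactor $\sqrt{(\sqrt{3}\sigma n + 2\nu)/\nu}$; and (ii) the implicit assumption, already flagged in the footnote to part (C) of the lemma, that $\Uv_N^{n,0}\in\V_0(\T_N)$, so that all iterates remain discretely divergence-free and the energy inequalities apply. One could alternatively derive a bound directly from \eqref{eq:aposterioriconfree} together with a geometric decay of $\norm{\uvnlo-\uvnl}_{1,2}$, but routing through the energy $\H_n$ is cleaner since the contraction is stated at the level of $\H_n$ rather than of the iterates themselves.
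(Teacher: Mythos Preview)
Your proof is correct and matches the paper's own argument, which simply cites Theorem~\ref{thm:hncontraction} and~\eqref{eq:equiv}. Note that although you list \eqref{eq:aposterioriconfree} in your plan, you never actually use it---the lower and upper bounds from \eqref{eq:equiv} together with the iterated contraction \eqref{eq:hncontraction} suffice.
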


\begin{proof}
This follows immediately from Theorem~\ref{thm:hncontraction} and~\eqref{eq:equiv}.
\end{proof}

Next, we will establish the convergence of the sequence of iterations for the pressure.

\begin{corollary} \label{cor:pressureconvergence}
Let $(\Uv_N^n,\pn) \in \V(\T_N) \times \Q_0(\T_N)$ be the unique solution of~\eqref{eq:weakbinghamconfree}, and let $\{(\uvnl,\pnl)\}_{\ell}$ be the sequence generated by~\eqref{eq:picardconfree}. Then we have that
\begin{align} \label{eq:pressureconvergence}
\norm{\pn-P_N^{n,\ell+1}}_2 \leq \beta_2^{-1}(\sqrt{3} \sigma n \nu^{-1}+3)(\sigma n +2 \nu) \norm{\uvnlo-\uvnl}_{1,2},
\end{align}
and $\pnl \to \pn$ as $\ell \to \infty$.
\end{corollary}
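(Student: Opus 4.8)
The plan is to run the standard inf-sup argument for the pressure error, using the velocity a posteriori bound already established in Theorem~\ref{thm:hncontraction}. Since $\pn-P_N^{n,\ell+1}\in\Q_0(\T_N)$, the discrete inf-sup condition~\eqref{eq:discreteinfsup} with $s=s'=2$ gives
\begin{align*}
\beta_2\norm{\pn-P_N^{n,\ell+1}}_2\le\sup_{\Vv\in\V(\T_N)\setminus\{\mathbf{0}\}}\frac{\int_\Omega(\pn-P_N^{n,\ell+1})\dive\Vv\dx}{\norm{\Vv}_{1,2}}=\sup_{\Vv\in\V(\T_N)\setminus\{\mathbf{0}\}}\frac{b(P_N^{n,\ell+1},\Vv)-b(\pn,\Vv)}{\norm{\Vv}_{1,2}},
\end{align*}
recalling $b(Q,\Vv)=-\int_\Omega Q\dive\Vv\dx$. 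So the first step is to express the numerator in terms of the velocity iterates: testing the exact convection-free problem~\eqref{eq:weakbinghamconfree} and the Ka\v{c}anov step~\eqref{eq:picardconfree} with $(\Vv,0)$ and subtracting eliminates the load term and yields, for every $\Vv\in\V(\T_N)$,
\begin{align*}
b(P_N^{n,\ell+1},\Vv)-b(\pn,\Vv)=a_n(\uvn;\uvn,\Vv)-a_n(\uvnl;\uvnlo,\Vv).
\end{align*}

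The second step is to bound this difference. I would split it as $\big(a_n(\uvn;\uvn,\Vv)-a_n(\uvnl;\uvnl,\Vv)\big)+a_n(\uvnl;\uvnl-\uvnlo,\Vv)$; the Lipschitz bound~\eqref{eq:anlipschitz} handles the first summand by $(\sqrt{3}\sigma n+2\nu)\norm{\uvn-\uvnl}_{1,2}\norm{\Vv}_{1,2}$, and the boundedness~\eqref{eq:anbounded} of the bilinear form $a_n(\uvnl;\cdot,\cdot)$ handles the second by $(\sigma n+2\nu)\norm{\uvnlo-\uvnl}_{1,2}\norm{\Vv}_{1,2}$. Dividing by $\norm{\Vv}_{1,2}$ and taking the supremum then gives
\begin{align*}
\beta_2\norm{\pn-P_N^{n,\ell+1}}_2\le(\sqrt{3}\sigma n+2\nu)\norm{\uvn-\uvnl}_{1,2}+(\sigma n+2\nu)\norm{\uvnlo-\uvnl}_{1,2}.
\end{align*}

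The third step is to feed in the a posteriori estimate~\eqref{eq:aposterioriconfree}, i.e.\ $\norm{\uvnl-\uvn}_{1,2}\le\nu^{-1}(\sigma n+2\nu)\norm{\uvnlo-\uvnl}_{1,2}$; after factoring out $(\sigma n+2\nu)\norm{\uvnlo-\uvnl}_{1,2}$ the remaining constant collapses to $\nu^{-1}(\sqrt{3}\sigma n+2\nu)+1=\sqrt{3}\sigma n\nu^{-1}+3$, which is precisely~\eqref{eq:pressureconvergence}. The convergence $\pnl\to\pn$ is then immediate: by Corollary~\ref{cor:velocityconvergence} we have $\uvnl\to\uvn$ in $W_0^{1,2}(\Omega)^d$, so $\norm{\uvnlo-\uvnl}_{1,2}\le\norm{\uvnlo-\uvn}_{1,2}+\norm{\uvn-\uvnl}_{1,2}\to0$ (equivalently this follows from~\eqref{eq:hnmono} together with the geometric decay of $\H_n(\uvnl)-\H_n(\uvn)$ from Theorem~\ref{thm:hncontraction}), whence the right-hand side of~\eqref{eq:pressureconvergence} vanishes as $\ell\to\infty$. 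There is no genuine obstacle here; the only points demanding attention are keeping the two velocity iterates $\uvnl$ and $\uvnlo$ in their correct slots when invoking~\eqref{eq:anlipschitz} versus~\eqref{eq:anbounded}, and checking that the constants recombine exactly into the stated form.
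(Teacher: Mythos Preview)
Your argument is correct and follows essentially the same route as the paper: subtract the exact equation~\eqref{eq:weakbinghamconfree} from the Ka\v{c}anov step~\eqref{eq:picardconfree}, split the resulting $a_n$-difference into a bounded piece~\eqref{eq:anbounded} and a Lipschitz piece~\eqref{eq:anlipschitz}, apply the a posteriori bound~\eqref{eq:aposterioriconfree}, and conclude via the discrete inf-sup condition~\eqref{eq:discreteinfsup}. The only cosmetic difference is that the paper bounds $b(\pn-P_N^{n,\ell+1},\Vv)$ first and invokes inf-sup at the end, whereas you invoke inf-sup at the outset; the constants and the logic are identical.
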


\begin{proof}
Take any $\Vv \in \V(\T_N)$. Since both $\uvn$ and $\uvnlo$ are discretely divergence-free, it follows from~\eqref{eq:weakbinghamconfree} and~\eqref{eq:picardconfree} that
\begin{align*}
b(\pn-P_N^{n,\ell+1},\Vv)&=a_n(\uvnl;\uvnlo,\Vv)-a_n(\uvn;\uvn,\Vv) \\
&=a_n(\uvnl;\uvnlo-\uvnl,\Vv) + a_n(\uvnl;\uvnl,\Vv)-a_n(\uvn;\uvn,\Vv).
\end{align*} 
Invoking~\eqref{eq:anbounded},~\eqref{eq:anlipschitz}, and, subsequently,~\eqref{eq:aposterioriconfree}, we find that
\begin{align*}
b(\pn-\pnl,\Vv) &\leq (\sigma n + 2 \nu)\norm{\uvnlo-\uvnl}_{1,2}\norm{\Vv}_{1,2}+(\sqrt{3} \sigma n+ 2\nu)\norm{\uvnl-\uvn}_{1,2}\norm{\Vv}_{1,2} \\
&\leq (\sigma n + 2 \nu)\norm{\uvnlo-\uvnl}_{1,2}\norm{\Vv}_{1,2} \\
& \quad + (\sqrt{3} \sigma n+ 2\nu)\frac{\sigma n+2 \nu}{\nu} \norm{\uvnlo-\uvnl}_{1,2}\norm{\Vv}_{1,2} \\
&=(\sqrt{3} \sigma n \nu^{-1}+3)(\sigma n +2 \nu)\norm{\uvnlo-\uvnl}_{1,2}\norm{\Vv}_{1,2}.
\end{align*}
Finally, the discrete inf-sup condition~\eqref{eq:discreteinfsup} implies~\eqref{eq:pressureconvergence}. The convergence of the pressure sequence now follows immediately from Corollary~\ref{cor:velocityconvergence}.
\end{proof}

We remark that, thanks to Corollaries~\ref{cor:velocityconvergence} and~\ref{cor:pressureconvergence}, the Assumption~\ref{as:discreteresidual} is satisfied, since $\F_{N,pde}^{n}:\V(\T_N) \times \Q_0(\T_N) \to \V(\T_N)^\star$ is a continuous operator. Nonetheless, we will state a convenient upper bound for the discrete residual.

\begin{corollary} \label{cor:discreteresidualconvergence}
Let $(\Uv_N^n,\pn) \in \V(\T_N) \times \Q_0(\T_N)$ be the unique solution of~\eqref{eq:weakbinghamconfree}, and let $\{(\uvnl,\pnl)\}_{\ell}$ be the sequence generated by~\eqref{eq:picardconfree}. Then we have that
\begin{align*}
\norm{\F_{N,pde}^{n,\ell}}_{N,-1,2} &\leq \left(2 \sqrt{3} \sigma^2 n^2 \nu^{-1}+5 (\sqrt{3}+1) \sigma n + 12 \nu\right) \norm{\uvnl-\Uv_N^{n,\ell-1}}_{1,2},
\end{align*}
and 
\begin{align} \label{eq:icresidualbound}
\norm{\F_{N,ic}^{n,\ell}}_{N,-1,2}=0. 
\end{align}
Consequently,
\begin{align*} 
\norm{\F_{N,pde}^{n,\ell}}_{N,-1,2} + \norm{\F_{N,ic}^{n,\ell}}_{N,-1,2} \to 0 \qquad \text{for} \qquad \ell \to \infty.
\end{align*}
\end{corollary}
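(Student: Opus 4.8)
The plan is to compare the inexact Ka\v{c}anov iterate $(\uvnl,\pnl)$ directly with the exact solution $(\uvn,\pn)$ of the convection-free problem~\eqref{eq:weakbinghamconfree}, along the lines of Corollary~\ref{cor:pressureconvergence}. In the present convection-free regime the relevant discrete residual~\eqref{eq:discreteresidualpde} reads $\dprod{\F_{N,pde}^{n,\ell},\Vv}=a_n(\uvnl;\uvnl,\Vv)+b(\pnl,\Vv)-\int_\Omega\fv\cdot\Vv\dx$, using that $\Sv^n(\D\uvnl)=\bigl(\tfrac{\sigma}{|\D\uvnl|_n}+2\nu\bigr)\D\uvnl$ so that $\int_\Omega\Sv^n(\D\uvnl):\D\Vv\dx=a_n(\uvnl;\uvnl,\Vv)$. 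Testing~\eqref{eq:weakbinghamconfree} with $(\Vv,0)$ gives $\int_\Omega\fv\cdot\Vv\dx=a_n(\uvn;\uvn,\Vv)+b(\pn,\Vv)$ for every $\Vv\in\V(\T_N)$, and subtracting yields the identity
\begin{align*}
\dprod{\F_{N,pde}^{n,\ell},\Vv}=\bigl(a_n(\uvnl;\uvnl,\Vv)-a_n(\uvn;\uvn,\Vv)\bigr)+b(\pnl-\pn,\Vv),\qquad\Vv\in\V(\T_N).
\end{align*}

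Next I would bound the two summands. The first is controlled by the Lipschitz continuity~\eqref{eq:anlipschitz}, which gives $a_n(\uvnl;\uvnl,\Vv)-a_n(\uvn;\uvn,\Vv)\le(\sqrt{3}\sigma n+2\nu)\norm{\uvnl-\uvn}_{1,2}\norm{\Vv}_{1,2}$; the second by the Cauchy--Schwarz inequality together with $\norm{\dive\Vv}_2\le\sqrt{d}\,\norm{\Vv}_{1,2}$, which gives $b(\pnl-\pn,\Vv)\le\sqrt{d}\,\norm{\pnl-\pn}_2\norm{\Vv}_{1,2}$. Dividing by $\norm{\Vv}_{1,2}$ and taking the supremum over $\Vv\in\V(\T_N)\setminus\{\mathbf{0}\}$ produces
\begin{align*}
\norm{\F_{N,pde}^{n,\ell}}_{N,-1,2}\le(\sqrt{3}\sigma n+2\nu)\norm{\uvnl-\uvn}_{1,2}+\sqrt{d}\,\norm{\pnl-\pn}_2.
\end{align*}

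It then remains to express the velocity and pressure errors through two consecutive iterates. For the velocity I would use the a posteriori bound~\eqref{eq:aposterioriconfree} with $\ell$ replaced by $\ell-1$, i.e. $\norm{\Uv_N^{n,\ell-1}-\uvn}_{1,2}\le\tfrac{\sigma n+2\nu}{\nu}\norm{\uvnl-\Uv_N^{n,\ell-1}}_{1,2}$, and combine it with the triangle inequality to obtain $\norm{\uvnl-\uvn}_{1,2}\le\tfrac{\sigma n+3\nu}{\nu}\norm{\uvnl-\Uv_N^{n,\ell-1}}_{1,2}$; for the pressure I would use~\eqref{eq:pressureconvergence}, again shifted by one index, to get $\norm{\pn-\pnl}_2\le\beta_2^{-1}(\sqrt{3}\sigma n\nu^{-1}+3)(\sigma n+2\nu)\norm{\uvnl-\Uv_N^{n,\ell-1}}_{1,2}$. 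Substituting both into the previous display, multiplying out, and collecting the polynomial in $n$ (the dimensional factor $\sqrt{d}\le\sqrt{3}$ and the inf-sup constant $\beta_2$ being absorbed into the coefficients) yields a bound of the stated shape, with coefficient $2\sqrt{3}\sigma^2 n^2\nu^{-1}+5(\sqrt{3}+1)\sigma n+12\nu$. The incompressibility residual is immediate: every Ka\v{c}anov iterate lies in $\V_0(\T_N)$, i.e. is discretely divergence-free (take $\Vv=\mathbf{0}$ in~\eqref{eq:picardconfree}), so $\dprod{\F_{N,ic}^{n,\ell},Q}=\int_\Omega Q\dive\uvnl\dx=0$ for all $Q\in\Q(\T_N)$, which is~\eqref{eq:icresidualbound}; cf.~Remark~\ref{rem:graphas}. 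Finally, convergence to zero follows since $\norm{\uvnl-\Uv_N^{n,\ell-1}}_{1,2}\to0$ as $\ell\to\infty$, a consequence of Corollary~\ref{cor:velocityconvergence} (so that $\uvnl\to\uvn$) and the triangle inequality.

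I do not anticipate a genuine difficulty: the statement is a chaining of estimates already established, and the two points needing care are (i) the index shift --- the a posteriori bounds~\eqref{eq:aposterioriconfree} and~\eqref{eq:pressureconvergence} are phrased in terms of $\norm{\uvnlo-\uvnl}_{1,2}$, whereas the target is phrased through $\norm{\uvnl-\Uv_N^{n,\ell-1}}_{1,2}$, so one must relabel and apply the triangle inequality --- and (ii) keeping track of the dimension- and inf-sup-dependent constants. I note in passing that a more direct (and sharper) route is available: substituting the iteration identity~\eqref{eq:picardconfree} (with index $\ell-1$) in place of~\eqref{eq:weakbinghamconfree} cancels the pressure term outright, leaving $\dprod{\F_{N,pde}^{n,\ell},\Vv}=\int_\Omega\sigma\bigl(\tfrac{1}{|\D\uvnl|_n}-\tfrac{1}{|\D\Uv_N^{n,\ell-1}|_n}\bigr)\D\uvnl:\D\Vv\dx$, and the elementary pointwise estimate $\bigl|\tfrac{1}{|\D\uvnl|_n}-\tfrac{1}{|\D\Uv_N^{n,\ell-1}|_n}\bigr|\,|\D\uvnl|\le n\,\bigl|\D(\uvnl-\Uv_N^{n,\ell-1})\bigr|$ then gives directly $\norm{\F_{N,pde}^{n,\ell}}_{N,-1,2}\le\sigma n\norm{\uvnl-\Uv_N^{n,\ell-1}}_{1,2}$; I would present the first version only to keep the constants aligned with the statement.
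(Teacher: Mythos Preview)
Your approach is essentially the same as the paper's: subtract the exact equation from the residual, apply the Lipschitz bound~\eqref{eq:anlipschitz} to the $a_n$-difference, bound the pressure contribution via Corollary~\ref{cor:pressureconvergence}, and then convert $\norm{\uvnl-\uvn}_{1,2}$ into $\norm{\uvnl-\Uv_N^{n,\ell-1}}_{1,2}$ using~\eqref{eq:aposterioriconfree} and the triangle inequality.

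There is one small slip in the constants. You bound $b(\pnl-\pn,\Vv)$ by passing through $\norm{\pnl-\pn}_2$ via Cauchy--Schwarz and then invoke~\eqref{eq:pressureconvergence}; this picks up an extra factor $\sqrt{d}\,\beta_2^{-1}$ which cannot be ``absorbed'' to recover the precise coefficient $2\sqrt{3}\sigma^2 n^2\nu^{-1}+5(\sqrt{3}+1)\sigma n+12\nu$ stated in the corollary. The paper avoids this by using the intermediate inequality from the \emph{proof} of Corollary~\ref{cor:pressureconvergence}, namely
\[
b(\pn-P_N^{n,\ell},\Vv)\le(\sqrt{3}\sigma n\nu^{-1}+3)(\sigma n+2\nu)\,\norm{\uvnl-\Uv_N^{n,\ell-1}}_{1,2}\norm{\Vv}_{1,2},
\]
which holds \emph{before} the inf-sup condition is applied and therefore carries no $\beta_2^{-1}$. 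Combining this with $(\sqrt{3}\sigma n+2\nu)\norm{\uvnl-\uvn}_{1,2}\le(\sqrt{3}\sigma n+2\nu)(\sigma n\nu^{-1}+3)\norm{\uvnl-\Uv_N^{n,\ell-1}}_{1,2}$ and expanding the product gives exactly the stated constant. With this adjustment your argument matches the paper's; your alternative direct route via the iteration identity is also valid and indeed sharper.
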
 

\begin{proof}
Since $(\uvn,P_N^n) \in \V_0(\T_N) \times \Q_0(\T_N)$ is a solution of~\eqref{eq:weakbinghamconfree}, we find that, for any $\Vv \in \V(\T_N)$,
\begin{align*}
\dprod{\F_{N,pde}^{n}(\uvnl,\pnl),\Vv}&=\dprod{\F_{N,pde}^{n}(\uvnl,\pnl),\Vv}-\dprod{\F_{N,pde}^n(\uvn,P_N^n),\Vv}\\
&=a_n(\uvnl;\uvnl,\Vv)-a_n(\uvn;\uvn,\Vv) + b(P_N^{n,\ell}-P_N^n,\Vv).
\end{align*}
Hence, by~\eqref{eq:anlipschitz} and (the proof of) Corollary~\ref{cor:pressureconvergence}, it further follows that
\begin{align*}
\dprod{\F_{N,pde}^{n,\ell},\Vv} &\leq (\sqrt{3} \sigma n+ 2 \nu) \norm{\uvn-\uvnl}_{1,2} \norm{\Vv}_{1,2} \\
& \quad +(\sqrt{3}\sigma n \nu^{-1}+3)(\sigma n + 2 \nu)\norm{\uvnl-\Uv_N^{n,\ell-1}}_{1,2}\norm{\Vv}_{1,2}.
\end{align*}
Finally, recalling~\eqref{eq:aposterioriconfree} and employing the triangle inequality yields
\begin{align*}
\dprod{\F_{N,pde}^{n,\ell},\Vv} &\leq \left((\sqrt{3} \sigma n+2\nu)(\sigma n \nu^{-1}+3)+(\sqrt{3}\sigma n \nu^{-1}+3)(\sigma n + 2 \nu)\right)\norm{\uvnl-\Uv_N^{n,\ell-1}}_{1,2}\norm{\Vv}_{1,2},
\end{align*}
which implies the first estimate. Moreover,~\eqref{eq:icresidualbound} follows immediately from the definition of $\F^n_{N,ic}$, cf.~\eqref{eq:discreteresidualic}, and as $\uvnl$ is discretely divergence-free.
\end{proof}

\begin{remark}
The Ka\v{c}anov scheme~\eqref{eq:picardconfree} was already considered in the paper~\cite{Aposporidis:2011}, in which it was referred to as the Picard iteration. However, we (significantly) improved their convergence results, cf.~\cite[Prop.~4 \& Thm.~2]{Aposporidis:2011}. We further note that both of our convergence results, cf.~\eqref{eq:kacanovvelocityconvergence} and~\eqref{eq:pressureconvergence}, depend adversely on the regularisation parameter $n$; in particular, the convergence may slow down for increasing $n$, which was already observed in~\cite{GrinevichOlshanskii:09, Aposporidis:2011}. Just recently, this issue was addressed in~\cite{Pollock:2021}. Indeed, those authors proposed a nonlinear solver based on Anderson acceleration, introduced in~\cite{Anderson:1965}, applied to the iteration scheme~\eqref{eq:picardconfree} in order to accelerate the convergence. Moreover, this also lead to a method that is, to a certain extent, robust with respect to the regularisation parameter $n$.  
\end{remark}

\subsection{Zarantonello iteration for general Bingham fluids}

Now we will also incorporate the convection term. Restricting the test functions to $\V_0(\T_N)$, the problem reduces to finding $\Uv_N^n \in \V_0(\T_N)$ such that
\begin{align} \label{eq:bingone}
a_n(\Uv_N^n;\Uv_N^n,\Vv)+\B[\Uv_N^n;\Uv_N^n,\Vv]&=\int_\Omega \fv \cdot \Vv \dx \qquad \text{for all} \ \Vv \in \V_0(\T_N).
\end{align} 
Let us define, for any given $\Uv \in \V(\T_N)$, the linear and invertible\footnote{This follows from the Lax--Milgram theorem, since, for any $\Uv \in \V(\T_N)$, $a_n(\Uv;\cdot,\cdot)$ is coercive and bounded on $\V(\T_N) \times \V(\T_N)$.} operator $\Ao_N^n[\Uv]: \V(\T_N) \to \V(\T_N)^\star$ by
\begin{align} \label{eq:Aop}
\Ao_N^n[\Uv](\Vv)(\Wv):=a_n(\Uv;\Vv,\Wv) \qquad \text{for all} \ \Vv, \Wv \in \V(T_N),
\end{align} 
and $\F_N^n:\V(\T_N) \to \V(\T_N)^\star$ by
\begin{align} \label{eq:Fbing}
\dprod{\F_N^n(\Uv),\Vv}:=\Ao_N^n[\Uv](\Uv)(\Vv)+\mathcal{B}[\Uv;\Uv,\Vv]-\fv(\Vv),
\end{align}
where $\fv(\Vv):=\int_\Omega \fv \cdot \Vv \dx$ for all $\Vv \in \V(\T_N)$. Then, problem~\eqref{eq:bingone} can be stated equivalently as follows: find $\Uv_N^n \in \V_0(\T_N)$ such that 
\begin{align} \label{eq:bingoneF}
\F_N^n(\Uv_N^n)=0 \qquad \text{in} \ \V_0(\T_N)^\star,
\end{align}
and \eqref{eq:weakbinghampde} can be restated as
\begin{align} \label{eq:beq}
b(P_N^n,\Vv)=-\dprod{\F_N^n(\Uv_N^n),\Vv} \qquad \text{for all} \ \Vv \in \V(\T_N).
\end{align}
We will consider the following fixed-point iteration scheme for the solution of~\eqref{eq:bingoneF}:
\begin{align} \label{eq:zarantonello}
\Uv_N^{n,\ell+1}=\To_N^n(\Uv_N^{n,\ell}):=\uvnl-\delta \J^{-1}\F_N^n(\uvnl),
\end{align}
where $\Uv_N^{n,0} \in \V_0(\T_N)$ is an initial guess and $\delta >0$ is a damping parameter. Moreover, $\J:\V_0(\T_N) \to \V_0(\T_N)^\star$ denotes the isometric isomorphism between $\V_0(\T_N)$ and $\V_0(\T_N)^\star$ from the Riesz representation theorem, where $\V_0(\T_N)$ is endowed with the inner-product $
(\Uv,\Vv)=\int_\Omega \D \Uv:\D \Vv \dx$. We note that~\eqref{eq:zarantonello} is, in particular, the Zarantonello iteration, cf.~Zarantonello's original work~\cite{Zarantonello:60} or the monograph~\cite[\S 25.4]{Zeidler:90}. Moreover, the iteration scheme~\eqref{eq:zarantonello} can be stated equivalently as
\begin{align} \label{eq:zarantonelloweak}
\int_\Omega \D \Uv_N^{n,\ell+1}: \D \Vv \dx=\int_\Omega \D \uvnl : \D \Vv \dx-\delta \dprod{\F_N^n(\uvnl),\Vv} \qquad \text{for all} \ \Vv \in \V_0(\T_N).
\end{align}
We will now examine the convergence of this iteration scheme. First, we will establish the self-mapping property $\tn:B_{R}^{N,\uv} \to B_{R}^{N,\uv}$, where $B_R^{N,\uv}:=\{\Uv \in \V_0(\T_N): \norm{\D \Uv}_2 \leq R\}$ for a suitable choice of $R>0$. 

\begin{lemma} \label{lem:Tbounded}
Assume the small data property
\begin{align} \label{eq:smalldata}
\norm{\fv}_{\star} :=\sup_{\Vv \in \V_0(\T_N) \setminus\{\mathbf{0}\}} \frac{\fv(\Vv)}{\norm{\D\Vv}_2} < \frac{\nu^2}{2 \sqrt{2}C_\B}.
\end{align}
If $0<\delta \leq 2 (4\nu + \sigma n)^{-1}$, then we have that
\begin{align} \label{eq:selfmap}
\tn(B_R^{N,\uv}) \subseteq B_R^{N,\uv} \quad \text{for} \quad \frac{\nu-\sqrt{\nu^2-2 \sqrt{2} \Cb \norm{\fv}_{\star}}}{2 \sqrt{2} \Cb} \leq R \leq \frac{\nu +\sqrt{\nu^2-2 \sqrt{2}\Cb \norm{\fv}_{\star}}}{2 \sqrt{2} \Cb};
\end{align}
in particular, the radius $R$ is independent of $n,N \in \mathbb{N}$.
\end{lemma}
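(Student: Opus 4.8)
The goal is to show that $\tn$ maps the ball $B_R^{N,\uv}$ into itself for a suitable radius $R$. The natural strategy is to take an arbitrary $\Uv \in B_R^{N,\uv}$, write $\Vv := \tn(\Uv) = \Uv - \delta \J^{-1}\F_N^n(\Uv)$, and estimate $\norm{\D\Vv}_2^2$ directly. Using the Riesz-isometry characterisation \eqref{eq:zarantonelloweak}, we have $\norm{\D\Vv}_2^2 = (\D\Vv, \D\Vv) = \int_\Omega \D\Uv : \D\Vv \dx - \delta \dprod{\F_N^n(\Uv), \Vv}$. Expanding $\F_N^n$ via \eqref{eq:Fbing} and \eqref{eq:Aop}, this becomes
\begin{align*}
\norm{\D\Vv}_2^2 = \int_\Omega \D\Uv:\D\Vv \dx - \delta\, a_n(\Uv;\Uv,\Vv) - \delta\,\B[\Uv;\Uv,\Vv] + \delta\,\fv(\Vv).
\end{align*}
The plan is then to bound each term. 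For the first two terms together, I would write $\int_\Omega \D\Uv:\D\Vv \dx - \delta\, a_n(\Uv;\Uv,\Vv) = \int_\Omega \bigl(1 - \delta\bigl(\tfrac{\sigma}{|\D\Uv|_n} + 2\nu\bigr)\bigr)\D\Uv:\D\Vv \dx$, and observe that the scalar factor $1 - \delta\mu_n(|\D\Uv|^2)$ lies in $[1-\delta(4\nu+\sigma n), 1-2\nu\delta]$ pointwise, since $2\nu \le \mu_n \le 4\nu + \sigma n$ (the lower bound is immediate; for the upper bound note $|\D\Uv|_n \ge n^{-1}$ only gives $\sigma/|\D\Uv|_n \le \sigma n$, but a cleaner bound uses $\mu_n \le 2\nu + \sigma n$ — I would check which constant the hypothesis $\delta \le 2(4\nu+\sigma n)^{-1}$ forces and track it consistently). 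Under $0 < \delta \le 2(4\nu+\sigma n)^{-1}$ this factor has absolute value at most $1 - 2\nu\delta$ (both endpoints of the interval are $\le 1-2\nu\delta$ in modulus when $\delta$ is in the stated range), so by Cauchy–Schwarz the first two terms are bounded by $(1-2\nu\delta)\norm{\D\Uv}_2\norm{\D\Vv}_2 \le (1-2\nu\delta) R \norm{\D\Vv}_2$.

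For the convection term I would use the boundedness \eqref{eq:bbounded} together with Korn's inequality \eqref{eq:korn} to pass between $\norm{\cdot}_{1,r}$-type norms and $\norm{\D\cdot}_2$ (here $r=2$ for Bingham, so $\tilde r' = 2$ as well and all the exponents collapse), giving $\B[\Uv;\Uv,\Vv] \le \Cb \norm{\Uv}_{1,2}^2\norm{\Vv}_{1,2} \le 2\sqrt{2}\,\Cb \norm{\D\Uv}_2^2 \norm{\D\Vv}_2 \le 2\sqrt{2}\,\Cb R^2 \norm{\D\Vv}_2$; the factor $2\sqrt 2$ comes from applying $\norm{\cdot}_{1,2}\le\sqrt2\norm{\D\cdot}_2$ to the two copies of $\Uv$ (and using the bound trivially on $\Vv$). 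The source term gives $\fv(\Vv) \le \norm{\fv}_\star \norm{\D\Vv}_2$ by the very definition \eqref{eq:smalldata} of $\norm{\fv}_\star$. Collecting everything and dividing through by $\norm{\D\Vv}_2$ (the case $\Vv = 0$ being trivial), we arrive at
\begin{align*}
\norm{\D\Vv}_2 \le (1-2\nu\delta) R + 2\sqrt{2}\,\delta\,\Cb R^2 + \delta\,\norm{\fv}_\star.
\end{align*}

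To conclude $\norm{\D\Vv}_2 \le R$ it suffices that $(1-2\nu\delta)R + 2\sqrt2\,\delta\,\Cb R^2 + \delta\norm{\fv}_\star \le R$, i.e. $2\sqrt2\,\Cb R^2 - 2\nu R + \norm{\fv}_\star \le 0$ (the common factor $\delta>0$ cancels, which is exactly why the admissible $R$-interval ends up independent of $\delta$, hence of $n,N$). This quadratic in $R$ has real roots precisely when the discriminant $4\nu^2 - 8\sqrt2\,\Cb\norm{\fv}_\star \ge 0$, which is equivalent to the small-data assumption \eqref{eq:smalldata}; and between its two roots — which are exactly the endpoints displayed in \eqref{eq:selfmap} — the quadratic is non-positive. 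This establishes $\tn(B_R^{N,\uv}) \subseteq B_R^{N,\uv}$ for every $R$ in that interval, with endpoints manifestly independent of $n$ and $N$.

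I do not expect a serious obstacle here; the argument is a textbook Zarantonello self-mapping estimate. The one point requiring care is the pointwise bound on $1-\delta\mu_n(|\D\Uv|^2)$: one must verify that under $0<\delta\le 2(4\nu+\sigma n)^{-1}$ the factor is bounded in modulus by $1-2\nu\delta$ (rather than merely by $1$), since this is what makes the contraction-type estimate close with the right constant. This hinges on $\mu_n$ being bounded above by $4\nu+\sigma n$ — or by the sharper $2\nu+\sigma n$ of Lemma~\ref{lem:anproperties}(a), in which case the hypothesis $\delta \le 2(4\nu+\sigma n)^{-1}$ is even slightly more generous than needed — and on the elementary fact that for $a \in [2\nu\delta, (4\nu+\sigma n)\delta] \subseteq [2\nu\delta, 2]$ one has $|1-a| \le 1-2\nu\delta$. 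The only other minor bookkeeping is keeping the Korn constants straight when translating \eqref{eq:bbounded} into the $\norm{\D\cdot}_2$ norm.
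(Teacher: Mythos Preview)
Your proof is correct and follows essentially the same route as the paper: expand $\norm{\D\tn(\Uv)}_2^2$ via the Riesz identity, combine the first two terms into $\int_\Omega (1-\delta\mu_n(|\D\Uv|^2))\D\Uv:\D\tn(\Uv)\dx$, bound the scalar factor in modulus by $1-2\nu\delta$ using the pointwise range of $\mu_n$ together with the hypothesis on $\delta$, estimate the convection and source terms exactly as you do, and then solve the resulting quadratic in $R$. The only minor slip is your aside on where the constant $2\sqrt{2}$ comes from (Korn must be applied to all three arguments, giving $(\sqrt{2})^3$), but your final inequality and the admissible $R$-interval are correct and match the paper.
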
 

\begin{proof}
By the definitions of the operators $\tn$, $\F_N^n$, and $\Ao_N^n$, cf.~\eqref{eq:zarantonello},~\eqref{eq:Fbing}, and \eqref{eq:Aop}, respectively, we have that
\begin{align*}
\norm{\D \tn(\Uv)}_2^2&= \int_\Omega \D \Uv:\D \tn(\Uv) \dx- \delta a_n(\Uv,\Uv,\tn(\Uv))-\delta \mathcal{B}[\Uv;\Uv,\tn (\Uv)]+\delta \fv(\tn(\Uv)) \\
&=\int_\Omega (1-\delta \mu_n(|\Du|^2))\D \Uv :\D \tn(\Uv) \dx-\delta \B[\Uv;\Uv,\tn(\Uv)]+\delta \fv(\tn(\Uv)).
\end{align*}
Thanks to~\eqref{eq:mukey} and the assumption on $\delta$ we further have that $|1-\delta \mu_n(|\Du|^2)| \leq (1-2 \delta \nu)$. Hence, the Cauchy--Schwarz inequality,~\eqref{eq:bbounded}, and Korn's inequality further yield that
\begin{align*}
\norm{\D \tn(\Uv)}_2 \leq (1-2 \delta \nu) \norm{\D \Uv}_2+2 \sqrt{2} \delta \Cb \norm{\D \Uv}_2^2 + \delta \norm{\fv}_{\star}.
\end{align*}
Consequently, in order to obtain a self-mapping, we need to find $R>0$ such that
\begin{align*}
(1-2 \delta \nu)R + 2 \sqrt{2} \delta C_\B R^2+ \delta \norm{\fv}_{\star} \leq R,
\end{align*}
which can simply be determined from the corresponding quadratic equation, whose solutions are given by the bounds in~\eqref{eq:selfmap}.
\end{proof}

Next, we will show that the operator $\F_N^n:\V_0(\T_N) \to \V_0(\T_N)^\star$ is strongly monotone and Lipschitz continuous on $B_R^\uv$, for $R$ as in~\eqref{eq:selfmap}, since this implies the convergence of the Zarantonello iteration scheme~\eqref{eq:zarantonello}.

\begin{theorem} \label{thm:velcon}
Suppose that the assumptions of Lemma~\ref{lem:Tbounded} hold. For $R >0$ as in~\eqref{eq:selfmap} we have that 
\begin{align} \label{eq:fstronglymonotone}
\dprod{\F_N^n(\Uv)-\F_N^n(\Vv),\Uv-\Vv} \geq \nu_\F(R) \norm{\Uv-\Vv}_{1,2}^2 \qquad \text{for all} \ \Uv,\Vv \in B_R^{N,\uv},
\end{align}
where $\nu_\F(R):=(\nu - \sqrt{2}\Cb R)>0$, and 
\begin{align} \label{eq:flipschitz}
|\dprod{\F(\Uv)-\F(\Vv),\Wv}| \leq L_\F (R,n) \norm{\Uv-\Vv}_{1,2} \norm{\Wv}_{1,2}  
\end{align}
for all $\Uv,\Vv \in B_R^{N,\uv}, \Wv \in \V_0(\T_N)$, where $L_\F(R,n):= (\sqrt{3} \sigma n+2 \nu +2 \sqrt{2} \Cb R)$. In particular, the iteration scheme~\eqref{eq:zarantonello} converges, for any initial guess $\Uv_N^{n,0} \in B_R^{N,\uv}$ and $0<\delta < 2 \nu_\F L_\F^{-2}$, to the unique solution $\Uv_N^n$ of~\eqref{eq:bingone} contained in the closed ball $B_R^{N,\uv}$.
\end{theorem}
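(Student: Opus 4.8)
The plan is to establish the two structural estimates \eqref{eq:fstronglymonotone} and \eqref{eq:flipschitz} by decomposing $\F_N^n$, cf.\ \eqref{eq:Fbing}, into its elliptic part $\Ao_N^n[\cdot](\cdot)$, handled by Lemma~\ref{lem:anproperties}(c), and its convective part $\B[\cdot;\cdot,\cdot]$, handled by \eqref{eq:bbounded}, the skew-symmetry \eqref{eq:skewsym}, and Korn's inequality \eqref{eq:korn}. Once \eqref{eq:fstronglymonotone} and \eqref{eq:flipschitz} are in hand, the convergence assertion follows from the self-mapping property of Lemma~\ref{lem:Tbounded} combined with the textbook contraction analysis of the Zarantonello iteration for a strongly monotone and Lipschitz continuous operator on a Hilbert space.

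For $\Uv,\Vv\in B_R^{N,\uv}$ we write $\dprod{\F_N^n(\Uv)-\F_N^n(\Vv),\Uv-\Vv}=\big(a_n(\Uv;\Uv,\Uv-\Vv)-a_n(\Vv;\Vv,\Uv-\Vv)\big)+\big(\B[\Uv;\Uv,\Uv-\Vv]-\B[\Vv;\Vv,\Uv-\Vv]\big)$; the first bracket is bounded below by $\nu\norm{\Uv-\Vv}_{1,2}^2$ thanks to \eqref{eq:anstrongmon}. Using that $\B$ is linear in each of its arguments, the second bracket equals $\B[\Uv;\Uv-\Vv,\Uv-\Vv]+\B[\Uv-\Vv;\Vv,\Uv-\Vv]$; here the first summand vanishes by \eqref{eq:skewsym} (for the $2$-graph $r=2$ one has $\tilde{r}'=2$, so \eqref{eq:skewsym} applies), while the second is estimated, via \eqref{eq:bbounded}, \eqref{eq:korn}, and $\norm{\Vv}_{1,2}\le\sqrt2R$ (since $\Vv\in B_R^{N,\uv}$), by $\sqrt2\Cb R\norm{\Uv-\Vv}_{1,2}^2$. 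Adding the two contributions yields \eqref{eq:fstronglymonotone} with $\nu_\F(R)=\nu-\sqrt2\Cb R$, which is strictly positive because the upper bound on $R$ in \eqref{eq:selfmap} together with the small-data hypothesis \eqref{eq:smalldata} forces $\sqrt2\Cb R<\nu$. For \eqref{eq:flipschitz} one uses the same splitting together with the identity $\B[\Uv;\Uv,\Wv]-\B[\Vv;\Vv,\Wv]=\B[\Uv;\Uv-\Vv,\Wv]+\B[\Uv-\Vv;\Vv,\Wv]$: the elliptic difference is controlled by \eqref{eq:anlipschitz}, and each of the two convective terms is bounded via \eqref{eq:bbounded}, \eqref{eq:korn}, and $\norm{\Uv}_{1,2},\norm{\Vv}_{1,2}\le\sqrt2R$, which together give \eqref{eq:flipschitz} with $L_\F(R,n)=\sqrt3\sigma n+2\nu+2\sqrt2\Cb R$.

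It remains to derive convergence. Since $L_\F\ge 2\nu+\sqrt3\sigma n$ and $\nu_\F\le\nu$, one checks that $2\nu_\F L_\F^{-2}\le 2(4\nu+\sigma n)^{-1}$, so every admissible $\delta\in(0,2\nu_\F L_\F^{-2})$ also satisfies the hypothesis of Lemma~\ref{lem:Tbounded}; consequently $\tn$ maps the closed---hence complete---ball $B_R^{N,\uv}$ into itself. On this set, expanding $\norm{\D(\tn(\Uv)-\tn(\Vv))}_2^2$ and invoking \eqref{eq:fstronglymonotone}, \eqref{eq:flipschitz}, Korn's inequality, and the isometry of the Riesz map $\J$ shows, as in \cite[\S 25.4]{Zeidler:90}, that $\tn$ is a contraction on $B_R^{N,\uv}$ with factor strictly less than $1$; Banach's fixed point theorem then produces a unique fixed point $\Uv_N^n\in B_R^{N,\uv}$ of $\tn$, to which the iterates \eqref{eq:zarantonello} converge from every starting point in $B_R^{N,\uv}$, and which by \eqref{eq:zarantonello} satisfies $\F_N^n(\Uv_N^n)=0$ in $\V_0(\T_N)^\star$, i.e.\ solves \eqref{eq:bingone}; uniqueness within $B_R^{N,\uv}$ is immediate from \eqref{eq:fstronglymonotone}. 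I expect the only real obstacle to be the convective term in the monotonicity step: $\B$ is not monotone, and the estimate succeeds only because half of $\B[\Uv;\Uv,\cdot]-\B[\Vv;\Vv,\cdot]$ drops out by skew-symmetry when tested against $\Uv-\Vv$ while the remaining half is absorbed into the coercivity $\nu\norm{\cdot}_{1,2}^2$ of $a_n$---which is exactly why the restriction to $B_R^{N,\uv}$ with $R$ as in \eqref{eq:selfmap} and the small-data condition \eqref{eq:smalldata} cannot be dispensed with.
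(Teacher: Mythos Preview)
Your proof is correct and follows essentially the same route as the paper's: split $\F_N^n$ into its elliptic and convective parts, handle the former via Lemma~\ref{lem:anproperties}(c) and the latter via a trilinear decomposition in which one summand is annihilated by the skew-symmetry~\eqref{eq:skewsym} while the other is controlled by~\eqref{eq:bbounded} together with Korn's inequality and the membership in $B_R^{N,\uv}$; then invoke Lemma~\ref{lem:Tbounded} and the standard Zarantonello contraction argument. The only cosmetic difference is that you write $\B[\Uv;\Uv,\Wv]-\B[\Vv;\Vv,\Wv]=\B[\Uv;\Uv-\Vv,\Wv]+\B[\Uv-\Vv;\Vv,\Wv]$ whereas the paper uses the equivalent splitting $\B[\Uv-\Vv;\Uv,\Wv]+\B[\Vv;\Uv-\Vv,\Wv]$, so a different summand is killed by~\eqref{eq:skewsym} in the monotonicity step---the resulting bounds are identical.
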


\begin{proof}
For any $\Uv,\Vv \in B_R^{N,\uv}$ we have by the definition of $\F_N^n$ that 
\begin{align*}
\dprod{\F_N^n(\Uv)-\F_N^n(\Vv),\Uv-\Vv} &= a_n(\Uv;\Uv,\Uv-\Vv)-a_n(\Vv;\Vv,\Uv-\Vv) \\
& \quad + \B[\Uv;\Uv,\Uv-\Vv]-\B[\Vv;\Vv,\Uv-\Vv].
\end{align*}
We note that
\[
\B[\Uv;\Uv,\Uv-\Vv]-\B[\Vv;\Vv,\Uv-\Vv]=\B[\Uv-\Vv;\Uv,\Uv-\Vv]-\B[\Vv;\Vv-\Uv,\Uv-\Vv],
\]
and thus, by ~\eqref{eq:skewsym}, \eqref{eq:bbounded}, and~\eqref{eq:korn},
\begin{align*}
|\B[\Uv;\Uv,\Uv-\Vv]-\B[\Vv;\Vv,\Uv-\Vv]| \leq \sqrt{2} \Cb \norm{\D \Uv}_2 \norm{\Uv-\Vv}_{1,2}^2 \leq \sqrt{2} \Cb R \norm{\Uv-\Vv}_{1,2}^2. 
\end{align*}
This, together with \eqref{eq:anstrongmon}, immediately implies~\eqref{eq:fstronglymonotone}. We note that $\nu-\sqrt{2} \Cb R>0$ for any $R$ as in~\eqref{eq:selfmap}.

Next we will show the local Lipschitz continuity of $\F_N^n$. Take any $\Uv,\Vv \in B^{N,\uv}_R$ and $\Wv \in \V_0(\T_N)$. By the triangle inequality we have that
\begin{align*}
|\dprod{\F_N^n(\Uv)-\F_N^n(\Vv),\Wv}| &\leq \left|a_n(\Uv;\Uv,\Wv)-a_n(\Vv;\Vv,\Wv)\right| +\left|\mathcal{B}[\Uv;\Uv,\Wv]-\B[\Vv;\Vv,\Wv]\right|.
\end{align*}
Similarly as before, the second summand can be bounded by
\begin{align*}
|\mathcal{B}[\Uv;\Uv,\Wv]-\B[\Vv;\Vv,\Wv]| &\leq |\mathcal{B}[\Uv-\Vv;\Uv,\Wv]-\B[\Vv;\Vv-\Uv,\Wv]|\\
& \leq 2 \sqrt{2} \Cb R \norm{\Uv-\Vv}_{1,2}\norm{\Wv}_{1,2}.
\end{align*}
Together with the bound~\eqref{eq:anlipschitz} this implies that
\begin{align*}
|\dprod{\F_N^n(\Uv)-\F_N^n(\Vv),\Wv}| \leq (\sqrt{3} \sigma n+2 \nu +2 \sqrt{2} \Cb R)\norm{\Uv-\Vv}_{1,2}\norm{\Wv}_{1,2}.
\end{align*}

Finally, we note that $\delta < 2 \nu_\F(R) L_\F(R,n)^{-2} < 2(4 \nu+\sigma n)^{-1}$, and therefore Lemma~\ref{lem:Tbounded} implies that $\tn:B_R^{N,\uv} \to B_R^{N,\uv}$ is a self-map. Consequently, the strong monotonicity~\eqref{eq:fstronglymonotone} together with the Lipschitz continuity~\eqref{eq:flipschitz} imply that $\tn:B_R^{N,\uv} \to B_R^{N,\uv}$ is a contraction on $B_R^{N,\uv}$, see, e.g., the proof of~\cite[Thm.~25.B]{Zeidler:90}. Thus, by the Banach fixed-point theorem, the sequence generated by~\eqref{eq:zarantonello} converges to the unique solution $\Uv_N^n$ of~\eqref{eq:bingoneF} in $B_R^{N,\uv}$.  
\end{proof}

\begin{remark}
We note that a small data assumption similar to~\eqref{eq:smalldata} guarantees the existence of a \emph{unique} solution of the steady Navier--Stokes equation for Newtonian fluids, see, e.g.,~\cite[Ch.~4, Thm.~2.2]{GiraultRaviart:86}. Its proof is also based on the contraction of an iterative mapping, which, however, is different from the one stated in~\eqref{eq:zarantonello}. 
\end{remark}

\begin{corollary} \label{cor:binghamconv}
The problem~\eqref{eq:weakbingham} has a unique solution $(\Uv_N^n,P_N^n) \in \V(\T_N) \times \Q_0(\T_N)$ such that $\Uv_N^n \in B_R^{N,\uv}$. 
\end{corollary}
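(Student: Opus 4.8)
The plan is to decouple Corollary~\ref{cor:binghamconv} into a velocity equation posed on the discretely divergence-free subspace, which is already settled by Theorem~\ref{thm:velcon}, followed by a pressure-recovery step that relies solely on the discrete inf-sup condition~\eqref{eq:discreteinfsup}.

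First I would record the equivalence between~\eqref{eq:weakbingham}, restricted to velocity components in $B_R^{N,\uv}$, and problem~\eqref{eq:bingone}. If $(\Uv_N^n,P_N^n)$ solves~\eqref{eq:weakbingham}, then~\eqref{eq:weakbinghamic} forces $\Uv_N^n\in\V_0(\T_N)$, and choosing the test function in~\eqref{eq:weakbinghampde} from $\V_0(\T_N)$ annihilates the term $b(P_N^n,\Vv)$, leaving exactly~\eqref{eq:bingone}, i.e. $\F_N^n(\Uv_N^n)=0$ in $\V_0(\T_N)^\star$. Conversely, Theorem~\ref{thm:velcon} tells us that $\F_N^n$ is strongly monotone (cf.~\eqref{eq:fstronglymonotone}) and Lipschitz continuous (cf.~\eqref{eq:flipschitz}) on $B_R^{N,\uv}$, and that the Zarantonello iteration~\eqref{eq:zarantonello} converges to the unique zero $\Uv_N^n\in B_R^{N,\uv}$ of $\F_N^n$ in $\V_0(\T_N)^\star$; this pins down the velocity component and its uniqueness.

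Next I would recover the pressure by a standard de Rham-type argument. Introduce the discrete divergence operator $B\colon\V(\T_N)\to\Q_0(\T_N)^\star$, $\dprod{B\Vv,Q}:=\int_\Omega Q\dive\Vv\dx$, so that $\ker B=\V_0(\T_N)$, and denote by $B^\star\colon\Q_0(\T_N)\to\V(\T_N)^\star$ its adjoint, for which $b(Q,\cdot)=-\dprod{B^\star Q,\cdot}$. The inf-sup condition~\eqref{eq:discreteinfsup} with $s=2$ amounts to the injectivity of $B^\star$ together with a uniform lower bound, and a rank--nullity count (all spaces being finite-dimensional) identifies the range of $B^\star$ with the set of functionals on $\V(\T_N)$ that vanish on $\ker B=\V_0(\T_N)$. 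Since $\F_N^n(\Uv_N^n)=0$ in $\V_0(\T_N)^\star$, the functional $\Vv\mapsto-\dprod{\F_N^n(\Uv_N^n),\Vv}$ on $\V(\T_N)$ vanishes on $\V_0(\T_N)$, hence there exists a unique $P_N^n\in\Q_0(\T_N)$ with $b(P_N^n,\Vv)=-\dprod{\F_N^n(\Uv_N^n),\Vv}$ for all $\Vv\in\V(\T_N)$, which is precisely~\eqref{eq:beq}. Unfolding the definition~\eqref{eq:Fbing} of $\F_N^n$ converts~\eqref{eq:beq} into~\eqref{eq:weakbinghampde}, while~\eqref{eq:weakbinghamic} holds because $\Uv_N^n\in\V_0(\T_N)$; thus $(\Uv_N^n,P_N^n)$ solves~\eqref{eq:weakbingham}.

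Finally I would close the uniqueness loop: if two pairs solve~\eqref{eq:weakbingham} with velocities in $B_R^{N,\uv}$, the first step forces the velocities to coincide with $\Uv_N^n$, and then the difference of the pressures lies in $\ker B^\star=\{\mathbf 0\}$ by~\eqref{eq:discreteinfsup}, so the pressures agree as well. I do not anticipate a genuine obstacle here: the analytic substance is already contained in Theorem~\ref{thm:velcon}, and the only point deserving a moment's care is verifying that the residual functional annihilates $\V_0(\T_N)$, which is exactly what makes the discrete inf-sup condition applicable to produce (and uniquely determine) the pressure.
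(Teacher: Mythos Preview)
Your proposal is correct and follows exactly the approach indicated in the paper, which simply states that the corollary follows immediately from Theorem~\ref{thm:velcon} and the discrete inf-sup condition~\eqref{eq:discreteinfsup}. You have merely spelled out the details of that one-line proof: the velocity comes from Theorem~\ref{thm:velcon}, and the pressure is then recovered (and shown to be unique) via~\eqref{eq:discreteinfsup}.
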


\begin{proof}
This follows immediately from Theorem~\ref{thm:velcon} and the (discrete) inf-sup condition~\eqref{eq:discreteinfsup}.
\end{proof}

Now let us consider the pressure term. To that end, we define the iteration scheme 
\begin{subequations} \label{eq:zarantonellop}
\begin{align}
\int_\Omega \D \Uv_N^{n,\ell+1}:\D \Vv \dx + \delta b(P_N^{n,\ell+1},\Vv)&=\int_\Omega \D \uvnl:\D \Vv \dx - \delta \dprod{\F_N^n(\uvnl),\Vv} \label{eq:zarantonelloppde} \\
b(\Uv_N^{n,\ell+1},Q)&=0
\end{align}
\end{subequations}
for all $(\Vv,Q) \in \V(\T_N) \times \Q(\T_N)$, where $\Uv_N^{n,0} \in \V(\T_N)$ is an arbitrary initial guess.

\begin{corollary}
Let $\Uv_N^{n,0} \in B_R^{N,\uv}$, where $R$ satisfies~\eqref{eq:selfmap}, and $0 < \delta < 2 \nu_\F L_\F^{-2}$, cf.~Theorem~\ref{thm:velcon}. Then, the sequence $\{(\Uv_N^{n,\ell},\pnl)\}_\ell$ generated by~\eqref{eq:zarantonellop} converges to the solution $(\Uv_N^n,P_N^n)$ from Corollary~\ref{cor:binghamconv}.
\end{corollary}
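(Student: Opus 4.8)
The plan is to reduce the statement to the two convergence results already in hand, namely Theorem~\ref{thm:velcon} for the velocity iterates and a duality argument resting on the discrete inf-sup condition~\eqref{eq:discreteinfsup} for the pressure. The first observation is that the velocity component of~\eqref{eq:zarantonellop} coincides with the Zarantonello iteration~\eqref{eq:zarantonello}: the second line of~\eqref{eq:zarantonellop} forces $\uvnlo \in \V_0(\T_N)$, and testing~\eqref{eq:zarantonelloppde} against $\Vv \in \V_0(\T_N)$, for which $b(P_N^{n,\ell+1},\Vv)=0$, recovers exactly~\eqref{eq:zarantonelloweak}. Hence, for $\ell \geq 1$, $\{\uvnl\}_\ell$ is the sequence generated by~\eqref{eq:zarantonello} with initial guess $\Uv_N^{n,0} \in B_R^{N,\uv}$; by Lemma~\ref{lem:Tbounded} all these iterates remain in $B_R^{N,\uv}$, and by Theorem~\ref{thm:velcon} one has $\norm{\uvnl-\uvn}_{1,2} \to 0$ as $\ell \to \infty$, where $(\uvn,\pn)$ is the solution furnished by Corollary~\ref{cor:binghamconv} and $\uvn \in B_R^{N,\uv}$.

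For the pressure, I would subtract the identity~\eqref{eq:beq} (evaluated at the solution $(\uvn,\pn)$) from~\eqref{eq:zarantonelloppde} to obtain, for every $\Vv \in \V(\T_N)$,
\begin{align*}
\delta\, b(P_N^{n,\ell+1}-\pn,\Vv)=\int_\Omega \D(\uvnl-\uvnlo):\D\Vv \dx-\delta\dprod{\F_N^n(\uvnl)-\F_N^n(\uvn),\Vv}.
\end{align*}
Since $\uvnl,\uvn \in B_R^{N,\uv}$, the local Lipschitz bound~\eqref{eq:flipschitz} applies to the last term, while the Cauchy--Schwarz inequality together with Korn's inequality~\eqref{eq:korn} controls the first; dividing by $\delta\norm{\Vv}_{1,2}$, taking the supremum over $\Vv \in \V(\T_N)\setminus\{\mathbf{0}\}$, and invoking~\eqref{eq:discreteinfsup} with $s=s'=2$ (note $P_N^{n,\ell+1}-\pn \in \Q_0(\T_N)$) gives
\begin{align*}
\beta_2\norm{P_N^{n,\ell+1}-\pn}_2\leq \delta^{-1}\norm{\D(\uvnl-\uvnlo)}_2+L_\F(R,n)\norm{\uvnl-\uvn}_{1,2}.
\end{align*}
Both terms on the right tend to zero as $\ell \to \infty$: the second by Theorem~\ref{thm:velcon}, and the first because $\norm{\D(\uvnl-\uvnlo)}_2 \leq \norm{\D(\uvnl-\uvn)}_2+\norm{\D(\uvn-\uvnlo)}_2$, with both summands converging by the same theorem. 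Hence $P_N^{n,\ell+1} \to \pn$ in $\Lom$, which combined with the velocity convergence establishes the corollary.

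The only genuinely delicate point — and the one I would single out as the main obstacle — is ensuring that every iterate $\uvnl$ lies in the ball $B_R^{N,\uv}$, since otherwise the Lipschitz estimate~\eqref{eq:flipschitz} is not available and the pressure bound collapses. This is secured by the self-mapping property of Lemma~\ref{lem:Tbounded} (legitimate because $\delta < 2\nu_\F L_\F^{-2} < 2(4\nu+\sigma n)^{-1}$), the hypothesis $\Uv_N^{n,0} \in B_R^{N,\uv}$, and the identification of the velocity part of~\eqref{eq:zarantonellop} with~\eqref{eq:zarantonello} made at the outset; once this is in place, the remainder is a routine inf-sup duality argument.
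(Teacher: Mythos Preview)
Your proof is correct and follows essentially the same route as the paper's: identify the velocity part of~\eqref{eq:zarantonellop} with the Zarantonello iteration~\eqref{eq:zarantonelloweak} to invoke Theorem~\ref{thm:velcon}, then subtract~\eqref{eq:beq} from~\eqref{eq:zarantonelloppde}, apply the Lipschitz bound~\eqref{eq:flipschitz} and the discrete inf-sup condition~\eqref{eq:discreteinfsup} to control the pressure error. If anything, you are more careful than the paper in justifying that each $\uvnl$ remains in $B_R^{N,\uv}$ so that~\eqref{eq:flipschitz} is applicable.
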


\begin{proof}
We note that the sequence of velocity vectors $\{\uvnl\}_\ell$ satisfies \eqref{eq:zarantonelloweak}, and thus, by Theorem~\ref{thm:velcon}, we have that $\Uv_N^{n,\ell}$ converges strongly to $\Uv_N^n$ in $W^{1,2}_0(\Omega)^d$. Moreover, since $(\Uv_N^n,P_N^n)$ is a solution of~\eqref{eq:weakbingham}, we further get that $b(P_N^n,\Vv)=-\dprod{\F_N^n(\Uv_N^n),\Vv}$ for all $\Vv \in \V(\T_N)$, cf.~\eqref{eq:beq}. Consequently, in view of~\eqref{eq:zarantonelloppde}, we obtain
\begin{align*}
\delta b(P_N^{n,\ell+1}-P_N^n,\Vv)&=\int_\Omega\D(\uvnl-\Uv_N^{n,\ell+1}):\D \Vv \dx+\delta \dprod{\F_N^n(\Uv_N^{n})-\F_N^n(\uvnl),\Vv}\\
& \leq \norm{\uvnl-\Uv_N^{n,\ell+1}}_{1,2}\norm{\Vv}_{1,2}+ \delta L_\F(R,n) \norm{\Uv_N^n-\uvnl}_{1,2}\norm{\Vv}_{1,2}.
\end{align*}
Then, the discrete inf-sup condition~\eqref{eq:discreteinfsup} implies that
\begin{align*}
\norm{P_N^{n,\ell+1}-P_N^n}_2 \leq (\beta_2 \delta)^{-1} \norm{\uvnl-\Uv_N^{n,\ell+1}}_{1,2}+\beta_2^{-1}L_\F(R,n) \norm{\Uv_N^n-\uvnl}_{1,2};
\end{align*}
together with the first part, i.e., the convergence of the sequence of the velocity vectors, this implies the convergence of the sequence of the pressure iterates. 
\end{proof}

\section{Numerical experiments}
Now we will perform two numerical experiments in order to highlight our analytical findings, one with and one without the convection term. Our algorithm is implemented in Python using the FEniCS software~\cite{fenics1, fenics2}. We will use the Taylor--Hood element pair for the discretisation, whereby our initial mesh consists of 32 uniform triangles, and, the initial velocity-pressure pair is chosen, in each case, to be the constant null function. For the graph approximation we will consider the mappings $\widetilde{\Sv}^m:=\Sv^{2^m}=\Sv^n$, where the latter is defined as in~\eqref{eq:graphapprox}; in the following, we call $m$ the graph approximation exponent and $n$ the graph approximation index. Then, it can be shown that
\begin{align*}
\mathcal{E}_\A(\Dv,\widetilde{\Sv}^m(\Dv))=\mathcal{E}_\A(\Dv,{\Sv}^{2^m}(\Dv)) \leq \frac{C(\sigma,\nu)}{2^{\nicefrac{2m}{3}}}=:\eta_{\A,m}(\Dv),
\end{align*}
where $C(\sigma,\nu)>0$ depends on the yield stress $\sigma$ and the viscosity $\nu$, cf.~\cite[\S 7]{KreuzerSuli:2016}; in our experiments below, we set $C(\sigma,\nu):=4$. We will select the elements for the refinement by the use of the D\"{o}rfler marking strategy, cf.~\cite{Doerfler:96}. Subsequently, the mesh is refined by the DOLFIN~\cite{dolfin1, dolfin2} subroutine \emph{refine}, which applies the Plaza algorithm~\cite{plaza:00}; we note that this refinement method is based on a number of bisections. The function $\zeta$ from~\eqref{eq:zetafunction}, which is of a rather theoretical purpose for our convergence analysis, is defined as $\zeta(N):=N^{-1}$. Moreover, after each mesh refinement or graph approximation update we perform at least one iteration step, independently of whether or not the criterion of the while loop, cf.~line 4 in Algorithm~\ref{alg:AILFEM}, is satisfied; of course, this does not interfere with the convergence of the algorithm.

\subsection{Bingham problem without the convection term} \label{ex:confree}

We will consider the Bingham fluid flow problem stated in~\cite[\S 6.1.1]{Aposporidis:2011}, see also~\cite[\S 5.2]{GrinevichOlshanskii:09}. We note that this is one of the very few problems for which the analytical solution is known. Here, the physical domain is given by $\Omega:=(0,1) \times (0,1)$, with Euclidean coordinates denoted by $(x,y)$, the yield stress is set to $\sigma:=0.3$, and the fluid viscosity is $\nu=1$. Then, the analytical solution for the velocity term is given by $\uv_\infty=(u_\infty,0)^{\mathrm{T}}$, where 
\begin{align} \label{eq:analyticalsolution}
u_\infty=
\begin{cases}
\frac{1}{8}\left(0.4^2-(0.4-2y)^2\right), & 0 \leq y \leq 0.2, \\
\frac{0.4^2}{8}, & 0.2 < y < 0.8, \\
\frac{1}{8}\left(0.4^2-(2y-1.6)^2 \right), & 0.8 \leq y \leq 1.
\end{cases}
\end{align}
Moreover, the Dirichlet boundary conditions are chosen accordingly to the solution $\uv_\infty$. 
In Figure~\ref{fig:adaptive_mesh} (left) we plot the error of the velocity vector $\norm{\uv_\infty-\Uv_N^{n_N,\ell_N}}_{1,2}$, as well as the a posteriori residual estimator\footnote{We note that this expression is indeed, up to a certain multiplicative constant, an upper bound on the residual $\mathcal{R}(\uvnll,\pnll,\tilde{\Sv}^{n_N}(\uvnll))$, cf.~Theorem~\ref{thm:upperbound}.} 
\begin{align} \label{eq:residualestimator}
\mathcal{E}_N\left(\uvnll,\pnll,\tilde{\Sv}^{n_N}(\uvnll)\right)^{\nicefrac{1}{2}}+\norm{\F_{N,pde}^{n_N}(\uvnll,\pnll)}_{N,-1,2},
\end{align}
against the number of elements in the mesh; here, $(\Uv_N^{n_N,\ell_N},P_N^{n_N,\ell_N})$ is the final iterate obtained by the Ka\v{c}anov iteration~\eqref{eq:picardconfree} on a given discrete space. We can see that the error of the velocity vector decays almost at a rate of $\mathcal{O}(|\T_N|^{-1})$, whilst the convergence rate of the residual estimator is $\mathcal{O}(|\T_N|^{-\nicefrac{1}{2}})$.  Furthermore, in Figure~\ref{fig:adaptive_mesh} (right), we visualise an intermediate mesh generated by the AILFEM. We observe that the mesh was mainly refined along the lines $y=0.2$ and $y=0.8$, i.e., the neighbourhood of the rigid regions, cp.~\eqref{eq:analyticalsolution}. In~\cite[Fig.~6.2]{Aposporidis:2011}, spikes of the error for the velocity approximation were observed in exactly those regions. Hence, this indicates that our mesh refinement strategy works well for the given example.

\begin{figure}
 \hfill
 \includegraphics[width=0.49\textwidth]{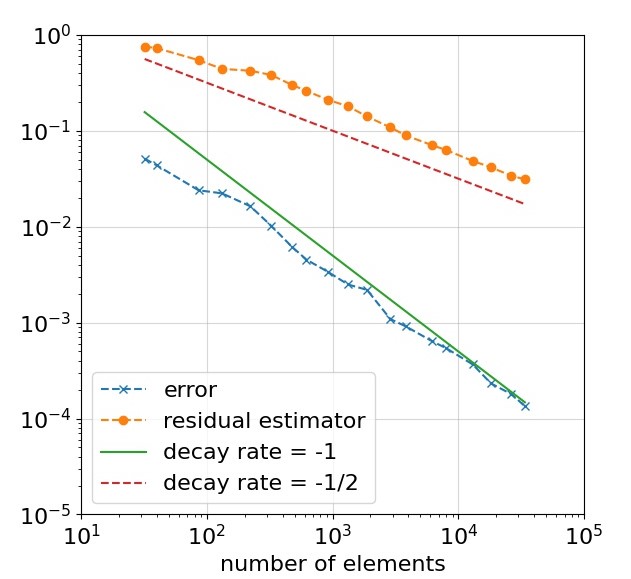}
\hfill
 \includegraphics[width=0.46\textwidth]{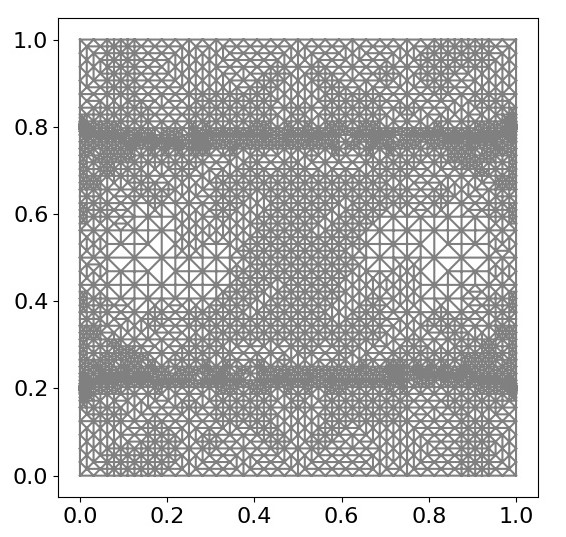}
 \hfill
 \caption{Ka\v{c}anov iteration for the Experiment~\ref{ex:confree}. Left: Convergence plot of the AILFEM. Right: Adaptively refined mesh with 8706 elements.}
 \label{fig:adaptive_mesh}
\end{figure}

In Table~\ref{table:noe} we list the number of iteration steps on each given Galerkin space, as well as the final graph approximation exponent $m$, against the number of elements in the mesh. Unfortunately, the number of iteration steps (slightly) growths with an increasing graph approximation exponent $m$ (and increasing number of elements in the mesh), which is in line with the estimator from Corollary~\ref{cor:discreteresidualconvergence}. However, we want to point out that the final graph approximation exponent $m=19$ leads to a corresponding index $n=2^{19} \approx 5 \cdot 10^5$ in~\eqref{eq:graphapprox}. In contrast, we note that the nonlinear variational Newton solver from FEniCS with default settings already failed to converge (in 50 steps) for $n=100$. Indeed, it was already noted in~\cite{DeanGlowinski:02} that, in the given setting, the domain of convergence for the Newton solver shrinks at a rate $\mathcal{O}(n^{-1})$.

\begin{table}
\begin{center}
\small{
  \begin{tabular}{c|cccccccccccc} 
    \toprule
    noe & $32$ &$40$ & $86$ & $132$ &$222$ &$324$&$478$&$620$&$922$&$1326$&$1878$&$2884$ \\ \midrule
nit &6& 1 &3&1&2&1&2&2&2&2&1&6\\
$m$ &5&5&7&7&8&8&9&10&11&12&12&14\\
	\bottomrule
    \end{tabular}
    }
    
\small{
  \begin{tabular}{c|ccccccccc} 
    \toprule
   noe &$3812$&$6162$ &8012&13044&18210&26476&33868 \\ \midrule
nit&2&8&8&14&24&19&23\\
$m$ &14&15&16&17&18&18&19\\
	\bottomrule 
    \end{tabular}
    }
      \captionof{table}{Ka\v{c}anov iteration for the Experiment~\ref{ex:confree}. The number of iterations (nit) on the given discrete space, as well as the final graph approximation exponent $m$, against the number of elements (noe) in the mesh.}
\label{table:noe}
\end{center} 
\end{table}



\subsection{Bingham problem with the convection term} \label{ex:convection}

For the Bingham fluid flow problem with the convection term we set the right-hand side function to 
\begin{align*}
\fv:=\begin{pmatrix}
\sin(\pi x) \cos(\pi y)-\cos(\pi x) \sin(\pi y) \\
xy
\end{pmatrix},
\end{align*}
where as before $(x,y) \in \Omega:=(0,1)^2$ denote the Euclidean coordinates, and impose homogeneous Dirichlet boundary conditions. Again, we consider the yield stress $\sigma=0.3$, and the fluid viscosity $\nu=1$. In the case of the Zarantonello iteration, we choose the damping parameter adaptively to be $\delta(n):=n^{-1}=2^{-m}$. In Figure~\ref{fig:convection} we plot the a posteriori residual estimator~\eqref{eq:residualestimator} against the number of elements in the mesh; even though, in presence of the convection term, we have guaranteed convergence for the Zarantonello iteration only, we will also consider the Ka\v{c}anov scheme:
\begin{align} \label{eq:kacanov}
a_n(\Uv_N^{n,\ell};\Uv_N^{n,\ell+1},\Vv)+\B[\Uv_N^{n,\ell};\Uv_N^{n,\ell+1},\Vv]+b(P_N^{n,\ell+1},\Vv)+b(Q,\Uv_N^{n,\ell+1})=\int_\Omega \fv \cdot \Vv \dx,
\end{align}
for all $(\Vv,Q) \in \V(\T_N) \times \Q(\T_N)$. As we can observe, the residual estimator decays at a rate $\mathcal{O}(|\T_N|^{-1/2})$ for both methods. Moreover, the Zarantonello scheme performed a total of 414 iteration steps, whereas the Ka\v{c}anov procedure only required 58 iteration steps. In both cases, the final graph approximation exponent was $m=23$. We note that the application of the Anderson acceleration, as proposed in~\cite{Pollock:2021} for the convection-free problem, might also enhance the convergence in the case that the convection term is included; however, this question is beyond the scope of our work considered here.

\begin{figure}
 \hfill
 \includegraphics[width=0.49\textwidth]{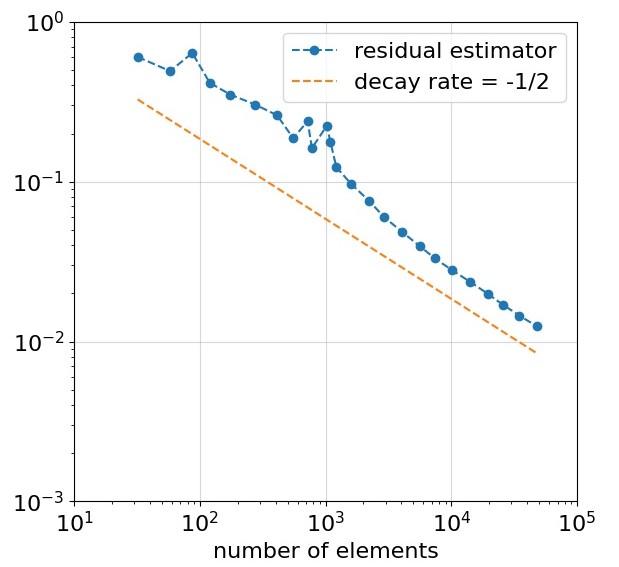}
\hfill
 \includegraphics[width=0.49\textwidth]{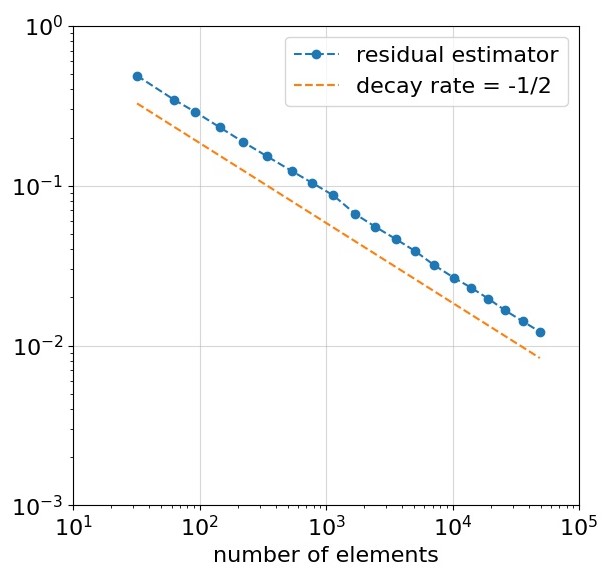}
 \hfill
 \caption{Experiment~\ref{ex:convection}. Left: Convergence plot for the Zarantonello iteration. Right: Convergence plot for the Ka\v{c}anov scheme.}
 \label{fig:convection}
\end{figure}
 
\begin{remark}
We note that the iteration scheme~\eqref{eq:kacanov} is also known as Picard's iteration, especially in the case of a constant viscosity, i.e., in the context of Newtonian fluids. Indeed, in~\cite[Ch.~4]{GiraultRaviart:86}, the uniqueness of the solution of the steady-state Navier--Stokes equation for Newtonian fluids under a small data assumption was proved by the contraction property of the fixed-point iteration corresponding to the Picard iteration. However, that analysis cannot be generalised to the setting considered here, as the bound on the source term $\fv$ depends in an unfavourable manner on the graph approximation index $n$; in particular, for $n \to \infty$, the only possible choice for the data is $\fv=\mathbf{0}$. 
\end{remark}
 
 \section{Conclusion}
In this work, in the context of implicitly constituted fluid flow problems, we further developed the adaptive finite element algorithm from~\cite{KreuzerSuli:2016} by taking into account the nonlinear solver, leading to the adaptive iterative linearised finite element algorithm introduced here. We showed that the Ka\v{c}anov and Zarantonello methods satisfy the assumptions on the nonlinear solver for the convergence of the AILFEM for Bingham fluids without and with inclusion of the convection term, respectively. Our numerical results indicate that the Ka\v{c}anov method does also convergence in the presence of the convection term, even with a significantly smaller number of iteration steps compared to the Zarantonello iteration, at least in the case of the specific model problem considered.

\appendix
\section{} \label{app:1}
In this appendix we will give a very rough sketch of the proof of Theorem~\ref{thm:main}. We emphasise once more that this result can be verified by some minor modifications, which will be stressed below, in the analysis of~\cite{KreuzerSuli:2016}.\\

For the purpose of the proof, we will define, for $s \in [1,\infty]$, the spaces
\[
\V_\infty^s:=\overline{\bigcup_{N \geq 0} \V(\T_N)}^{\norm{\cdot}_{1,s}} \subset W_0^{1,s}(\Omega)^d \qquad \text{and} \qquad \Q_{0,\infty}^s:=\overline{\bigcup_{N \geq 0} \Q_{0}(\T_N)}^{\norm{\cdot}_s} \subset L^s_0(\Omega).
\] 
We will now establish the convergence in several steps. \\
 
\textbf{Step 1 (\hspace{1sp}\cite[Lem.~5.2]{KreuzerSuli:2016}):}  Show that, for at least a not relabelled subsequence, we have that
\begin{align*}
\Uv_N^{n_N,\ell_N} &\rightharpoonup \uv_\infty && \text{weakly in} \ W_0^{1,r}(\Omega)^d,&\\
P_{N}^{n_N,\ell_N} &\rightharpoonup p_\infty && \text{weakly in} \ L_0^{\tilde{r}}(\Omega),&\\
\Sv^{n_N}(\D \Uv_N^{n_N,\ell_N}) & \rightharpoonup \Sv_\infty && \text{weakly in} \ L^{r'}(\Omega;\Rsym),&
\end{align*} 
for some $(\uv_\infty,p_\infty,\Sv_\infty) \in \V_\infty^r \times \Q_{0,\infty}^{\tilde{r}} \times L^{r'}(\Omega;\Rsym)$, as $N \to \infty$. Then, further verify that
\begin{align}
\R^{pde}(\Uv_N^{n_N,\ell_N},P_N^{n_N,\ell_N},\Sv^{n_N}(\D \Uv_N^{n_N,\ell_N})) &\rightharpoonup^\star \R^{pde}(\uv_\infty,p_\infty,\Sv_{\infty}) &&\text{weakly* in} \ W^{-1,\tilde{r}}(\Omega)^d, \label{eq:weakstarconvergence}\\
\R^{ic}(\Uv_N^{n_N,\ell_N}) &\rightharpoonup \R^{ic}(\uv_\infty) && \text{weakly in} \ L^{r}(\Omega), \label{eq:weakconvergenceic}
\end{align}
and 
\begin{align} \label{eq:step1sol}
\dprod{\R(\uv_\infty,p_\infty,\Sv_\infty),(\vv,q)}=0 \qquad \text{for all} \ q \in \Q_{0,\infty}^{r'}, \vv \in \V_\infty^{\tilde{r}'}.
\end{align}
\\
A key ingredient of the proof of those properties is the uniform bound~\eqref{eq:uniformbound} from
Lemma~\ref{lem:uniformbound}, as this yields the existence of weak limit points $\uv_\infty \in \V_\infty^r$ and $\Sv_\infty \in L^{r'}(\Omega;\Rsym)$. Then, in order to show that the weak limit $\uv_\infty \in \V_\infty^r$ is weakly divergence-free with respect to $\Q_{0,\infty}^{\tilde{r}}$, we have to employ an interpolation error bound for the operator $\jq$ and use that $\norm{\F_{N,ic}^{n_N,\ell_N}}_{N,-1,r} \to 0$ as $N \to \infty$ by line 4 in Algorithm~\ref{alg:AILFEM} and~\eqref{eq:zetafunction}. The weak convergence of the pressure iterates can again be shown via the boundedness of this sequence. In turn, for the sake of proving the uniform boundedness of the sequence of pressure iterates we have to employ the discrete inf-sup condition~\eqref{eq:discreteinfsup}. We further note that, compared to the analysis in~\cite{KreuzerSuli:2016}, we have in this context an additional term $\dprod{\F_{N,pde}^{n_N,\ell_N},\Vv}$, $\Vv \in \V(\T_N)$, which, however, can be bounded uniformly in $N$ by 
\begin{align*}
|\dprod{\F_{N,pde}^{n_N,\ell_N},\Vv}| \leq C_r \zeta_{\max} \norm{\Vv}_r,
\end{align*}
where $\zeta_{\max}:=\max_{N \in \mathbb{N}}\zeta(N)$, cf.~line 4 in Algorithm~\ref{alg:AILFEM} and~\eqref{eq:zetafunction}, and $C_r$ is the constant from Remark~\ref{rem:dualnormequiv}. The remaining properties~\eqref{eq:weakstarconvergence}--\eqref{eq:step1sol} can be established as in~\cite{KreuzerSuli:2016} using the density $\V_\infty^\infty \subseteq \V_\infty^{\tilde{r}'}$, and, on account of inexact finite element approximations, since $\dprod{\F_{N,pde}^{n_N,\ell_N},\jdiv \Vv}$ vanishes as $N \to \infty$. \\

In the following, $\{(\Uv_N^{n_N,\ell_N},P_N^{n_N,\ell_N},\Sv^{n_N}(\D \Uv_N^{n_N,\ell_N}))\}_{N} \subset W_0^{1,r}(\Omega)^d \times L_0^{\tilde{r}}(\Omega) \times L^{r'}(\Omega;\Rsym)$ will always denote a not relabelled subsequence with weak limit $(\uv_\infty,p_\infty,\Sv_\infty) \in \V_\infty^r \times \Q_{0,\infty}^{\tilde{r}} \times L^{r'}(\Omega;\Rsym)$ as obtained in Step~1. We will show below, cf.~Steps 2--4, that $(\uv_\infty,p_\infty,\Sv_\infty)$ is a solution of~\eqref{eq:pde} by verifying that the equivalent properties~\eqref{eq:equivprop} from Lemma~\ref{lem:solution} are satisfied. \\

\textbf{Step 2 (\hspace{1sp}\cite[Cor.~5.3]{KreuzerSuli:2016}):}  Show that 
\begin{align} \label{eq:step2ass}
\mathcal{E}_N(n_N,\ell_N) \to 0 \qquad \text{as} \ N \to \infty 
\end{align} 
implies that
\begin{align*} 
\R(\uv_\infty,p_\infty,\Sv_\infty)=0 \qquad \text{in} \ W^{-1,\tilde{t}}(\Omega)^d \times L_0^t(\Omega).
\end{align*}
\\
Indeed, this claim follows from Theorem~\ref{thm:upperbound}, the weak* convergence from~\eqref{eq:weakstarconvergence}, the weak convergence from~\eqref{eq:weakconvergenceic} (with respect to $L^t(\Omega)$), the fact that $\norm{\F_{N,pde}^{n_N,\ell_N}}_{N,-1,\tilde{t}} \to 0$ as $N \to \infty$, and the uniqueness of the limit point.\\

\textbf{Step 3 (\hspace{1sp}\cite[Lem.~5.4]{KreuzerSuli:2016}):} The next step is to verify that 
\begin{align} \label{eq:step3ass}
\mathcal{E}_\A(N,n_N,\ell_N) \to 0 \qquad \text{as} \ N \to \infty
\end{align}
yields
\begin{align*} 
(\D \uv_\infty(\x),\Sv_\infty(\x)) \in \A(\x) \qquad \text{for almost every} \ \x \in \Omega.
\end{align*}
In turn, by definition of $\mathcal{E}_\A$, cf.~\eqref{eq:graphapproxerror}, we have that $\mathcal{E}_\A(\D \uv_\infty,\Sv_\infty)=0$.\\

\noindent The proof of this statement is quite lengthy and technical, and we shall simply refer to the corresponding proof of~\cite[Lem.~5.4]{KreuzerSuli:2016}. We note, however, that on account of inexact finite element approximations $\Uv_N^{n_N,\ell_N} \in \V(\T_N)$ of~\eqref{eq:discreteproblem} there appears an additional term $\dprod{\F_{N,pde}^{n_N,\ell_N},\mathit{\Phi}_{N,j}}$, $N,j \in \mathbb{N}$, in the proof\footnote{In particular, this additional term occurs in the summand $\Rnum{2}_{k,j}$, for $k=N$, of Step 2 in the proof of~\cite[Lem.~5.4]{KreuzerSuli:2016} on page 1359.}, where $\{\mathit{\Phi}_{N,j}\}_{N}$ is uniformly bounded in $W_0^{1,r}(\Omega)^d$. Consequently,
\[
|\dprod{\F_{N,pde}^{n_N,\ell_N},\mathit{\Phi}_{N,j}}| \leq \norm{\F_{N,pde}^{n_N,\ell_N}}_{N,-1,r'} \norm{\mathit{\Phi}_{N,j}}_{1,r} \to 0 \qquad \text{as} \ N \to \infty,
\]   
since $\norm{\F_{N,pde}^{n_N,\ell_N}}_{N,-1,r'}\leq C_r \zeta(N) \to 0$ as $N \to \infty$ by the modus operandi of AILFEM,~\eqref{eq:zetafunction}, and Remark~\ref{rem:dualnormequiv}.\\

Hence, in view of Lemma~\ref{lem:solution} and Steps 2 and 3 from above it only remains to establish~\eqref{eq:step2ass} and~\eqref{eq:step3ass}.\\

\textbf{Step 4 (\hspace{1sp}\cite[Thm.~5.7]{KreuzerSuli:2016}):} Show that~\eqref{eq:step3ass} holds for the subsequence from Step 1, and~\eqref{eq:step2ass} for a sub-subsequence of that subsequence.\\

\noindent In order to prove those two assertions, we will need the following preliminary result.

\begin{lemma}[{\hspace{1sp}\cite[Lem.~5.5]{KreuzerSuli:2016}}] \label{lem:app} If $n_N=\tilde{n}$ for some $\tilde{n} \in \mathbb{N}$ and all $N$ large enough, then we have that, for at least a not relabelled sub-subsequence, 
\[
\mathcal{E}_N(n_N,\ell_N) \to 0 \qquad \text{as} \ N \to \infty.
\] 
\end{lemma}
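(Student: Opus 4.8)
The plan is to follow the proof of~\cite[Lem.~5.5]{KreuzerSuli:2016}, keeping track only of the new ingredient caused by the inexactness of the linear solves. Since the graph approximation index can be incremented solely through the \emph{else}-branch of Algorithm~\ref{alg:AILFEM}, the hypothesis $n_N=\tilde n$ for all $N$ large enough forces, from some index $N_0$ on, the test in line~9 to be satisfied and the mesh to be genuinely refined at every step; in particular $\mathcal{E}_N(n_N,\ell_N)\geq\eta_{\A}(N,n_N,\ell_N)\geq0$ for $N\geq N_0$, and $\{\T_N\}$ is an infinite sequence of proper refinements. Thus we are reduced to a classical adaptive loop, with inexact linear solves, for the \emph{fixed} regularised problem~\eqref{eq:discreteproblem} associated with $\Sv^{\tilde n}$, and the only terms not present in~\cite{KreuzerSuli:2016} are the discrete residuals $\F_{N,pde}^{n_N,\ell_N}$ and $\F_{N,ic}^{n_N,\ell_N}$; by line~4 of Algorithm~\ref{alg:AILFEM}, together with~\eqref{eq:zetafunction} and Remark~\ref{rem:dualnormequiv}, these obey $\norm{\F_{N,pde}^{n_N,\ell_N}}_{N,-1,\tilde t}+\norm{\F_{N,pde}^{n_N,\ell_N}}_{N,-1,r'}+\norm{\F_{N,ic}^{n_N,\ell_N}}_{N,-1,r}\to0$ as $N\to\infty$, which makes their presence harmless wherever it occurs.

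First I would invoke Step~1 to fix a not relabelled subsequence along which $\Uv_N^{n_N,\ell_N}\rightharpoonup\uv_\infty$ in $W_0^{1,r}(\Omega)^d$, $P_N^{n_N,\ell_N}\rightharpoonup p_\infty$ in $L_0^{\tilde r}(\Omega)$ and $\Sv^{\tilde n}(\D\Uv_N^{n_N,\ell_N})\rightharpoonup\Sv_\infty$ in $L^{r'}(\Omega;\Rsym)$, with the uniform bound~\eqref{eq:uniformbound} in force and with $(\uv_\infty,p_\infty,\Sv_\infty)$ satisfying the limiting identity~\eqref{eq:step1sol}. Next I would introduce the usual partition of each triangulation into the set $\T_N^{+}:=\{K\in\T_N:K\in\T_M\ \text{for all}\ M\geq N\}$ of elements surviving in every later mesh and its complement $\T_N^{0}:=\T_N\setminus\T_N^{+}$, writing $\Omega_N^{0}:=\bigcup_{K\in\T_N^{0}}K$. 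Two elementary facts about such shape-regular, bisection-refined meshes are used: since only finitely many elements of diameter above a given threshold can ever be created from the finite initial mesh, $\norm{h_{\T_N}\chi_{\Omega_N^{0}}}_{\infty}\to0$ as $N\to\infty$; and since every marked element is bisected by the REFINE step, the marked set $\mathcal{M}_N$ satisfies $\mathcal{M}_N\subseteq\T_N^{0}$, whence $\mathcal{E}_N(n_N,\ell_N;\mathcal{M}_N)\leq\mathcal{E}_N(n_N,\ell_N;\T_N^{0})$.

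The heart of the proof is to show that the estimator restricted to the refined region vanishes,
\[
\mathcal{E}_N\bigl(\Uv_N^{n_N,\ell_N},P_N^{n_N,\ell_N},\Sv^{\tilde n}(\D\Uv_N^{n_N,\ell_N});\T_N^{0}\bigr)\to0 \qquad \text{as } N\to\infty,
\]
possibly after a further not relabelled extraction -- which is exactly the sub-subsequence in the statement. For the element- and face-residual parts of $\mathcal{E}_N^{pde}$, which carry explicit weights $h_K$ and $h_{\partial K}^{1/\tilde t}$, this follows immediately from $\norm{h_{\T_N}\chi_{\Omega_N^{0}}}_{\infty}\to0$ combined with the uniform bounds of Lemma~\ref{lem:uniformbound} on $\Uv_N^{n_N,\ell_N}$ and $\Sv^{\tilde n}(\D\Uv_N^{n_N,\ell_N})$, with $\fv\in L^{r'}(\Omega)^d$, with the piecewise-polynomial structure of $\mathrm{B}[\Uv,\Uv]$ from Remark~\ref{rem:convectionpoly}, and with the local $L^1$-stability of $\Pi_N$. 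The genuinely delicate contributions are the stress oscillation $\norm{\Sv^{\tilde n}(\D\Uv_N^{n_N,\ell_N})-\Pi_N\Sv^{\tilde n}(\D\Uv_N^{n_N,\ell_N})}_{\tilde t,\Omega_N^{0}}^{\tilde t}$ and the incompressibility indicator $\norm{\dive\Uv_N^{n_N,\ell_N}}_{t,\Omega_N^{0}}^{t}$, neither of which is scaled by a mesh-size factor; these I would treat exactly as in~\cite[Lem.~5.4--5.5]{KreuzerSuli:2016}. Testing the discrete equation with $\Uv_N^{n_N,\ell_N}$ itself, using the skew-symmetry~\eqref{eq:skewsym}, the bounds of Lemma~\ref{lem:uniformbound}, and the vanishing of the residuals, one gets $\int_\Omega\Sv^{\tilde n}(\D\Uv_N^{n_N,\ell_N}):\D\Uv_N^{n_N,\ell_N}\dx\to\int_\Omega\Sv_\infty:\D\uv_\infty\dx$; a Minty-type argument using the monotonicity of $\Sv^{\tilde n}$ then identifies $\Sv_\infty=\Sv^{\tilde n}(\D\uv_\infty)$ and, after a further extraction, yields $\D\Uv_N^{n_N,\ell_N}\to\D\uv_\infty$ almost everywhere in $\Omega$. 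Continuity of $\Sv^{\tilde n}(\x,\cdot)$ gives $\Sv^{\tilde n}(\D\Uv_N^{n_N,\ell_N})\to\Sv_\infty$ a.e., and since this sequence is bounded in $L^{r'}(\Omega;\Rsym)$, equi-integrability together with Vitali's theorem upgrades the convergence to strong convergence in $L^{\tilde t}(\Omega;\Rsym)$; combined with the local stability of $\Pi_N$ and $\norm{h_{\T_N}\chi_{\Omega_N^{0}}}_{\infty}\to0$ this forces the oscillation term to zero, and the incompressibility indicator is handled analogously via the discrete divergence-freeness, the local approximation properties of $\jq$ on $\Omega_N^{0}$, and $\dive\uv_\infty=0$. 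I expect this passage to the limit for the symmetric gradients of the (inexact) iterates -- for a merely strictly, not uniformly, monotone stress -- to be the main obstacle and the most technical step; relative to~\cite{KreuzerSuli:2016} one additionally only has to check that the residual terms $\dprod{\F_{N,pde}^{n_N,\ell_N},\cdot}$ and $\dprod{\F_{N,ic}^{n_N,\ell_N},\cdot}$ entering those computations are controlled by $\zeta(N)\to0$.

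Finally I would conclude by contradiction. If $\mathcal{E}_N(n_N,\ell_N)\not\to0$ along the sub-subsequence just obtained, then after one more extraction $\mathcal{E}_N(n_N,\ell_N)\geq\varepsilon$ for some $\varepsilon>0$ and all $N$. The marking subroutine -- D\"{o}rfler's strategy~\cite{Doerfler:96} with bulk parameter $\theta$, or the maximum strategy~\cite{Babuska:1984} -- forces $\mathcal{E}_N(n_N,\ell_N;\mathcal{M}_N)$ to be bounded below by a positive multiple $c(\theta)\varepsilon$ of the total estimator (for D\"{o}rfler marking), respectively, to control it via the largest element indicator (for the maximum strategy, which is argued slightly differently, as in~\cite{KreuzerSuli:2016}); since $\mathcal{M}_N\subseteq\T_N^{0}$ this yields, in either case, a lower bound on $\mathcal{E}_N(n_N,\ell_N;\T_N^{0})$ bounded away from zero, contradicting the convergence established in the previous paragraph. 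Hence $\mathcal{E}_N(n_N,\ell_N)\to0$ along the sub-subsequence, which is the assertion of Lemma~\ref{lem:app}.
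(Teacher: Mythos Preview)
Your proposal is correct and takes essentially the same approach as the paper: both defer to the proof of \cite[Lem.~5.5]{KreuzerSuli:2016} and observe that the only new ingredients caused by the inexact linear solves---terms of the form $\dprod{\F_{N,pde}^{n_N,\ell_N},\Vv_N}$ and $\dprod{\F_{N,ic}^{n_N,\ell_N},Q_N}$ with uniformly bounded test functions---vanish as $N\to\infty$ thanks to the stopping criterion in line~4 of Algorithm~\ref{alg:AILFEM}, \eqref{eq:zetafunction}, and Remark~\ref{rem:dualnormequiv}. Your sketch is in fact considerably more detailed than the paper's own one-paragraph argument, and correctly identifies the decomposition into $\T_N^{+}/\T_N^{0}$, the mesh-size decay on $\Omega_N^{0}$, the Minty-type identification of the stress limit, and the marking-based contradiction as the key steps of the underlying reference.
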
 

The proof of this lemma is again very technical and long. However, up to some minor modifications, it coincides with the corresponding proof in~\cite{KreuzerSuli:2016}. Indeed, we only have to deal with some additional terms of the form $\dprod{\F_{N,pde}^{n_N,\ell_N},\Vv_N}$ and $\dprod{\F_{N,ic}^{n_N,\ell_N},Q_N}$, where $\Vv_N \in \V(\T_N)$ and $Q_N \in \Q(\T_N)$ are uniformly bounded in $W^{1,s}(\Omega)^d$, $s \in \{r,\tilde{t}'\}$, and $L^{s}(\Omega)$, $s \in \{r',t'\}$, respectively. Hence, those terms vanish as $N$ goes to infinity thanks to the employed stopping criterion of the while-loop (lines 4--6) in Algorithm~\ref{alg:AILFEM} and Remark~\ref{rem:dualnormequiv}.\\

Now we will use Lemma~\ref{lem:app} to prove Step 4, which will be done along the lines of the proof of~\cite[Thm.~5.7]{KreuzerSuli:2016}. First, assume for the sake of contradiction that there exists a positive constant $\epsilon>0$ such that, for some not relabelled sub-subsequence,
\begin{align*}
\eta_\A(N,n_N,\ell_N) \geq \epsilon \qquad \text{for all} \ N=0,1,2,\dotsc.
\end{align*}
In view of Assumption~\ref{as:graphapp2} this implies that $n_N=\tilde{n}$ for some $\tilde{n} \in \mathbb{N}$ and all $N$ large enough. Consequently, Lemma~\ref{lem:app} yields that, for a further not relabelled sub-subsequence, $\mathcal{E}_N(n_N,\ell_N) \to 0$ as $N \to \infty$. In particular, for some $N$ large enough we have that $\mathcal{E}_N(n_N,\ell_N)<\epsilon$, and thus, by lines 9--14 of Algorithm~\ref{alg:AILFEM}, we set $n_{N+1}=n_N+1=\tilde{n}+1$, which yields the desired contradiction; since $\mathcal{E}_{\A}(N,n_N,\ell_N) \leq \eta_\A(N,n_N,\ell_N)$, we have established property~\eqref{eq:step3ass}.

Next, we assume again for the sake of contradiction that there exists an $\epsilon>0$ such that
\begin{align} \label{eq:contradiciton}
\mathcal{E}_N(n_N,\ell_N)\geq \varepsilon \qquad \text{for all} \ N=0,1,2,\dotsc.
\end{align}
From the first part we know that there exists an integer $N_0$ such that $\eta_\A(N,n_N,\ell_N)<\epsilon \leq \mathcal{E}_N(n_N,\ell_N)$ for all $N \geq N_0$. Consequently, again by the modus operandi of AILFEM, we have that $n_N=n_{N_0}$ for all $N \geq N_0$. However, then Lemma~\ref{lem:app} contradicts~\eqref{eq:contradiciton}, and thus, at least for a not relabelled sub-subsequence,~\eqref{eq:step2ass} is satisfied.

\bibliographystyle{amsplain}
\bibliography{references}
\end{document}